\setlist[enumerate]{nosep}
\definecolor{labelkey}{rgb}{0,0.08,0.45}
\definecolor{refkey}{rgb}{0,0.6,0.0}
\definecolor{Brown}{rgb}{0.45,0.0,0.05}
\definecolor{lime}{rgb}{0.00,0.8,0.0}
\definecolor{lblue}{rgb}{0.5,0.5,0.99}
\newcommand{\aref}[1]{\hyperref[#1]{Appendix~\ref{#1}}}
\providecommand{\siff}{\Leftrightarrow}
\newcommand{\nnn}{\ensuremath{{n\in{\mathbb N}}}}
\newcommand{\menge}[2]{\big\{{#1}~\big |~{#2}\big\}}
\newcommand{\fenv}[1]%
{\ensuremath{\,\overrightarrow{\operatorname{env}}_{#1}}}
\newcommand{\benv}[1]%
{\ensuremath{\,\overleftarrow{\operatorname{env}}_{#1}}}
\newcommand{\RR}{\ensuremath{\mathbb R}}
\newcommand{\NN}{\ensuremath{\mathbb N}}
\providecommand{\BB}[2]{\operatorname{ball}(#1;#2)}
\newcommand{\dom}{\ensuremath{\operatorname{dom}}}
\newcommand{\argmin}{\ensuremath{\operatorname{argmin}}}
\newcommand{\prox}{\ensuremath{\operatorname{Prox}}}
\newcommand{\ran}{\ensuremath{\operatorname{ran}}}
\newcommand{\zer}{\ensuremath{\operatorname{zer}}}
\newcommand{\conv}{\ensuremath{\operatorname{conv}\,}}
\newcommand{\Id}{\ensuremath{\operatorname{Id}}}
\crefname{equation}{}{equations}
\crefname{chapter}{Appendix}{chapters}
\crefname{item}{}{items}
\newtheorem{theorem}{Theorem}[section]
\newtheorem{lem}[theorem]{Lemma}
\newtheorem{cor}[theorem]{Corollary}
\newtheorem{proposition}[theorem]{Proposition}
\newtheorem{prop}[theorem]{Proposition}
\newtheorem{defn}[theorem]{Definition}
\newtheorem{thm}[theorem]{Theorem}
\newtheorem{example}[theorem]{Example}
\newtheorem{ex}[theorem]{Example}
\newtheorem{fact}[theorem]{Fact}
\newtheorem{rem}[theorem]{Remark}
\def\endproof{\ensuremath{\hfill \quad \blacksquare}

}
\providecommand{\ds}{\displaystyle}
\providecommand{\abs}[1]{\lvert#1\rvert}
\providecommand{\Abs}[1]{\Big\lvert#1\Big\rvert}
\providecommand{\norm}[1]{\lVert#1\rVert}
\providecommand{\normsq}[1]{\lVert#1\rVert^2}
\providecommand{\bk}[1]{\left(#1\right)}
\providecommand{\stb}[1]{\left\{#1\right\}}
\providecommand{\innp}[1]{\langle#1\rangle}
\providecommand{\RA}{\Rightarrow}
\providecommand{\grad}{\nabla}
\providecommand{\pat}{\partial}
\providecommand{\RR}{\mathbb{R}}
\providecommand{\conv}{\operatorname{conv}}
\providecommand{\ran}{\operatorname{ran}}
\providecommand{\intr}{\operatorname{int}}
\providecommand{\dom}{\operatorname{dom}}
\newcommand{\fix}{\ensuremath{\operatorname{Fix}}}
\providecommand{\parl}{\operatorname{par}}
\providecommand{\gra}{\operatorname{gra}}
\providecommand{\Id}{\operatorname{{ Id}}}
\providecommand{\fady}{\varnothing}
\providecommand{\argmin}{\mathrm{arg}\!\min}
\providecommand{\rras}{\rightrightarrows}
\providecommand{\NN}{\mathbb{N}}
\providecommand{\BB}[2]{\operatorname{ball}(#1;#2)}
\providecommand{\fix}{\operatorname{Fix}}
\providecommand{\ran}{\operatorname{ran}}
\providecommand{\rec}{\operatorname{rec}}
\providecommand{\Id}{\operatorname{Id}}
\providecommand{\pt}{{\partial}}
\providecommand{\zer}{\operatorname{zer}}
\providecommand{\inns}[2][w]{#2_{#1}}
\newcommand{\outs}[2][w]{{_#1}#2}
\providecommand{\DR}{\operatorname{DR}}
\providecommand{\FB}{\operatorname{FB}}
\providecommand{\TDR}[1][w,]{T_{{#1}{\DR}}}
\providecommand{\TFB}[1][w,]{T_{{#1}{\FB}}}
\providecommand{\fejer}{Fej\'{e}r }
\providecommand{\fady}{\varnothing}
\providecommand{\ri}{\operatorname{ri}}
\providecommand{\RR}{\mathbb{R}}
\providecommand{\NN}{\mathbb{N}}
\providecommand{\linop}{L}
\providecommand{\DR}{\operatorname{DR}}
\providecommand{\FB}{{\operatorname{FB}}}
\def\namedlabel#1#2{\begingroup
   \def\@currentlabel{#2}%
   \label{#1}\endgroup
}
\definecolor{myblue}{rgb}{.8, .8, 1}
  \newcommand*\mybluebox[1]{%
    \colorbox{myblue}{\hspace{1em}#1\hspace{1em}}}
\newcommand{\seppthree}{\setlength{\itemsep}{-3pt}}
\begin{document}

\title{\textsc
The forward-backward algorithm 
and the normal problem}

\author{
Walaa M.\ Moursi\thanks{
Mathematics, University of British Columbia, 
Kelowna, B.C.\ V1V~1V7, Canada
and 
Mansoura University, Faculty of Science, Mathematics Department, 
Mansoura 35516, Egypt. 
E-mail: \texttt{walaa.moursi@ubc.ca}.}}

\maketitle
\begin{abstract}
\noindent
The forward-backward splitting technique is a popular method
for solving monotone inclusions that has applications in optimization.
In this paper we explore the behaviour of the algorithm when 
the inclusion problem has no solution. We present a new formula to
define the normal solutions
using the forward-backward operator. We also provide a formula for the range of the displacement
map of the forward-backward operator.
Several examples illustrate our theory.
\end{abstract}
{\small
\noindent
{\bfseries 2010 Mathematics Subject Classification:}
{Primary 
47H09, 
49M27, 
65K05, 
65K10, 
Secondary 
47H05, 
47H14, 
49M29, 
49N15. 
}

\noindent {\bfseries Keywords:}
Attouch--Th\'era duality,
Douglas--Rachford splitting operator,
firmly nonexpansive mapping, 
fixed point,
forward-backward splitting operator,
generalized solution,
linear convergence,
maximally monotone operator,
normal cone operator,
normal problem,
projection operator.
}

\section{Introduction}

Throughout this paper we work under the assumption that
\begin{empheq}[box=\mybluebox]{equation*}
\label{T:assmp}
X \text{~~is a real Hilbert space},
\end{empheq} 
with inner product $\innp{\cdot,\cdot}$ and
induced norm $\norm{\cdot}$. 
A (possibly) set-valued operator 
$A:X\rras X$ is \emph{monotone} if any two pairs 
$(x,u)$ and $(y,v)$ in the graph of $A$
satisfy
$\innp{x-y,u-v}\ge 0$, and 
 is \emph{maximally monotone} 
if it is monotone and 
any proper enlargement of the graph of 
$A$ (in terms of set inclusion) 
will no longer preserve the monotonicity of $A$.
In the following we assume that
\begin{empheq}[box=\mybluebox]{equation}
A \colon X\rras X \;\text{and}\;  B \colon X\rras X\;\;\text{are 
maximally monotone operators.}
\end{empheq}
Thanks to the fact that the \emph{subdifferential} 
operator associated with a convex lower semicontinuous 
proper function is a maximally monotone operator 
(see \cref{fact:func:sd} below), 
the notion of monotone operators becomes 
of significant importance in optimization
and nonlinear analysis.
For further discussion on monotone operator theory and
its connection to optimization see, e.g., the books
\cite{BC2011}, 
\cite{BorVanBook},
\cite{Brezis}, 
\cite{BurIus},
\cite{Simons1},
\cite{Simons2},
\cite{Zeidler2a},
\cite{Zeidler2b},
and
\cite{Zeidler1}. 

The problem of finding a zero of the sum of two maximally 
monotone operators
$A$ and $B$
is to find $x\in X$ such that $x\in (A+B)^{-1}0$.
When specializing $A$ and $B$  to subdifferential
operators of convex lower semicontinuous proper 
functions,
the problem is equivalent to 
finding a minimizer of the sum of the two functions, 
which is a classical optimization problem.

Suppose that $A$ is \emph{firmly nonexpansive}\footnote{We 
point out that the assumption of that 
$A$ is firmly nonexpansive
can be relaxed to $A$ is cocoercive 
(see \cref{rem:fnon:coco}).}(see \cref{nexp:f:nexp}).
Let $x_0\in X$ and let $\TFB[]$ be
the forward-backward operator associated 
with the pair $(A,B)$ (see \cref{fb:duality}). 
When $(A+B)^{-1}0\neq \fady $
the 
sequence
$(\TFB[]^n x_0)_{n\in \NN}$
produced by iterating the forward-backward operator
 converges 
weakly\footnote{For general conditions on \emph{strong}
convergence of the forward-backward algorithm 
we refer the reader to \cite{ABC10}.} to a point
in $(A+B)^{-1}0=\fix \TFB[]
=\menge{x\in X}{x=\TFB[]x}$
(see, e.g., \cite{Tseng91}, \cite{Lemaire97} or \cite{Comb04}). 
Applications of this setting appear in convex optimization  
(see, e.g., \cite[Section~27.3]{BC2011}), 
evolution inclusions (see, e.g., \cite{ABC10})
and inverse problems (see, e.g., \cite{CombWaj05} 
and \cite{CombDVu10}).


\emph{The goal of this work is to examine 
the forward-backward
operator in the inconsistent case,  
i.e., when $(A+B)^{-1}0=\fady$, 
using the framework of the normal problem
introduced in \cite{Sicon2014}. In this case $\fix \TFB[]=\fady$,
and the classical analysis, which uses the advantage
 of iterating an \emph{averaged}
operator (see \cref{nexp:f:nexp} below)
that has a fixed point, is no longer applicable.}

Let us summarize the main contributions of the 
paper:
\begin{itemize}
\item[{\bf R1}]\namedlabel{R:1}{\bf R1}
We provide a systematic study of the 
forward-backward operator
when the sum problem is possibly inconsistent.
This is mainly illustrated in \cref{prop:eqv}
where we establish the connection between the 
perturbed problem
introduced in \cite{Sicon2014} 
and the forward-backward operator.

\item[{\bf R2}]\namedlabel{R:2}{\bf R2}
We prove that the range of the displacement operator
associated with the forward-backward operator $\TFB[]$
coincides with that of the Douglas-Rachford operator $\TDR[]$.
Consequently, the minimal displacement vectors 
associated with  
$\TFB[]$ and $\TDR[]$ coincide (see \cref{cor:conc:old}). 
This gives an alternative approach to define the normal
problem introduced in \cite{Sicon2014}.

\item[{\bf R3}]\namedlabel{R:3}{\bf R3}
A significant consequence of 
\ref{R:2} is that it allows to 
use the advantage of the self-duality of $\TDR[]$
(which does not hold for $\TFB[]$ 
as we illustrate in \cref{ex:TFB:not:sd}) 
to draw more conclusions
about $\TFB[]$.
In particular, in \cref{cor:conc} we provide a formula for the
range of the displacement operator in terms of the ranges of the 
underlying operators using the notion of \emph{near equality}.
The result simplifies to more elegant formulae when specializing 
the 
operators to subdifferential operators as illustrated  
in \cref{prop:func:fd}.
Our results are sharp in the sense that 
near equality cannot be replaced by equality 
which we illustrate in \cref{ex:neq:not:eq}.
\item[{\bf R4}]\namedlabel{R:4}{\bf R4}
In the case when $A$ and $B$ are affine, we
prove that, in the consistent case, the sequence 
produced by iterating $\TFB[]$ converges 
\emph{strongly} to the \emph{nearest} point 
in the set of zeros of the sum.
If $X$ is finite-dimensional, we also get \emph{linear} rate
of convergence (see \cref{prop:FB:app}).
\end{itemize}

The remainder of this paper is organized as
follows: \cref{nexp:f:nexp} provides facts and auxiliary results
concerning averaged and (firmly) nonexpansive operators.
In \cref{fb:duality}, we provide an overview of the Attouch-Th\'{e}ra
duality and formulate the primal and dual solutions using the 
forward-backward operator. Our main results start in
\cref{s:fb:normal}, which deals with the normal problem and 
the connection to the forward-backward operator.
In \cref{s:fb:ran}, we explore the range of the 
displacement operator associated with 
the forward-backward operator.
In \cref{s:aff:app}, we study the asymptotic behaviour
of \emph{asymptotically regular} affine nonexpansive operators
in the possibly fixed point free setting.
An application to the forward-backward algorithm is provided as well.
Finally in \cref{s:alg:consq} we provide some algorithmic consequences.

\subsection*{Notation}
Let $C$ be a nonempty closed convex 
subset of $X$. 
We use $\iota_C$, $N_C$
and $P_C$ to denote
the \emph{indicator function},
the \emph{normal cone}
operator and the \emph{projector (this is also known as 
nearest point mapping)}
associated with $C$, respectively.
Let $f\colon X\to \left]-\infty,+\infty\right]$ be convex, lower semicontinuous, and proper.
The \emph{subdifferential} of $f$ is the (possibly)
set-valued operator
$\pt f\colon X\rras X:x\to\menge{u\in X}{(\forall y\in X)~
f(y)\ge f(x)+\innp{u,y-x}}$.
Let $\Id:X\to X$ be the identity operator. 
The \emph{resolvent} of $A$ is $J_A:=(\Id+A)^{-1}$
and the \emph{reflected resolvent} is
$R_A:=2J_A-\Id$. 
Otherwise, the notation we adopt is standard and follows, 
e.g., \cite{BC2011} and \cite{Rock98}.
\section{Averaged and (firmly) nonexpansive operators}
\label{nexp:f:nexp}
Let $T:X\to X$. Then $T$ is \emph{nonexpansive}
if 
\begin{equation}
(\forall x\in X)
(\forall y\in X)
\quad\norm{Tx-Ty}\le \norm{x-y};
\end{equation}
$T$ is \emph{firmly nonexpansive}
if
\begin{equation}
(\forall x\in X)
(\forall y\in X)
\quad\normsq{Tx-Ty}+\normsq{(\Id-T)x-(\Id-T)y}\le \normsq{x-y};
\end{equation}
and $T$ is \emph{averaged} if 
there exists 
$\alpha \in \left]0,1\right[$
and a nonexpansive operator $N:X\to X$ such that 
\begin{equation}
T=(1-\alpha )\Id+\alpha N.
\end{equation}
\begin{fact}
\label{JA:fne:orig}
The following hold:
\begin{enumerate}
\item
\label{JA:fne:orig:i}
$J_A$ is single-valued, maximally monotone and 
firmly nonexpansive.
\item 
\label{JA:fne:orig:ii}
(The inverse resolvent identity)
$J_{A^{-1}}=\Id-J_A$.
\end{enumerate}
\end{fact}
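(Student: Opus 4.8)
The plan is to extract everything from the defining equivalence
\[
p = J_A x \iff x - p \in A p,
\]
which is immediate from $J_A = (\Id+A)^{-1}$. Both assertions then reduce to bookkeeping on this relation; the only genuinely nontrivial ingredient is Minty's theorem, which supplies the surjectivity needed to identify the domain in part~(i).

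For part~(i), I would first establish firm nonexpansiveness directly from monotonicity of $A$. Given $p \in J_A x$ and $q \in J_A y$, the defining relation yields $x - p \in Ap$ and $y - q \in Aq$, so applying monotonicity of $A$ to these two pairs in the graph gives $\scal{p-q}{(x-p)-(y-q)} \ge 0$, which rearranges to $\normsq{p-q} \le \scal{p-q}{x-y}$. This is exactly the one-inequality form of firm nonexpansiveness: expanding the definition in the excerpt with $u := Tx - Ty$ and $v := x - y$ shows that $\normsq{u} + \normsq{v-u} \le \normsq{v}$ is equivalent to $\normsq{u} \le \scal{u}{v}$. Setting $x = y$ in the inequality above forces $p = q$, so $J_A$ is single-valued on its domain; and $\dom J_A = X$ is precisely $\ran(\Id+A) = X$, i.e.\ Minty's theorem for the maximally monotone $A$. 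Since a single-valued firmly nonexpansive operator with full domain is monotone and continuous, it is maximally monotone, which completes part~(i).

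For part~(ii), I would fix $x \in X$, set $p := J_A x$ and $q := x - p$, and run the chain
\[
p = J_A x \iff x - p \in A p \iff q \in A(x-q) \iff x - q \in A^{-1} q \iff q = \JAinv x,
\]
where the third equivalence is the defining relation $u \in Av \iff v \in A^{-1}u$ and the last unwinds $\JAinv = (\Id + A^{-1})^{-1}$. Reading off the endpoints gives $\JAinv x = q = x - J_A x = (\Id - J_A)x$, which is the claimed identity.

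The only step that is not pure symbol-pushing is $\dom J_A = X$, for which I would invoke Minty's theorem; single-valuedness, firm nonexpansiveness, and the inverse resolvent identity are all elementary consequences of the graph description of the resolvent, so I expect no obstacle beyond correctly citing the surjectivity result.
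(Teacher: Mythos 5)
Your proof is correct, but it takes a different route from the paper: the paper does not argue at all, it simply cites the literature (Minty's original paper and Rockafellar's proximal-point paper for part~(i), and Rockafellar--Wets for the inverse resolvent identity), whereas you reconstruct the standard arguments from scratch. Your derivation of firm nonexpansiveness from monotonicity of $A$ via the graph relation $x-p\in Ap$ is exactly right, and your observation that the two-norm definition is equivalent to $\normsq{Tx-Ty}\le\scal{Tx-Ty}{x-y}$ is precisely the characterization the paper records as its equation~\eqref{def:coco}; single-valuedness then follows by setting $x=y$, and full domain is, as you say, exactly Minty's surjectivity theorem. The one ingredient you invoke without proof beyond Minty is the fact that a continuous, monotone, everywhere-defined operator is maximally monotone; this is standard (e.g.\ it appears in \cite{BC2011}), so it is a legitimate citation, though it means your part~(i) is not entirely self-contained either --- it trades the paper's two citations for Minty plus this one. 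Your chain of equivalences for part~(ii) is clean and complete (and implicitly shows $J_{A^{-1}}$ is single-valued, since the chain identifies $J_{A^{-1}}x$ with the singleton $x-J_Ax$). What your approach buys is transparency: the reader sees that everything reduces to bookkeeping on $p=J_Ax\siff x-p\in Ap$; what the paper's approach buys is brevity, which is appropriate since these are classical facts labelled as such.
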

\begin{proof}
\ref{JA:fne:orig:i}:
See 
\cite[Corollary~on~page~344]{Minty2}
 and \cite[Proposition~1(c)]{Rock76}.
 \ref{JA:fne:orig:ii}:
 See, e.g., \cite[Lemma~12.14]{Rock98}.
\end{proof}
In the sequel we make use of 
the useful characterization  
(see, e.g., 
\cite[Equation~11.1~on~page~42]{GReich84}):
\begin{equation}
\label{def:coco}
T\text{~is firmly nonexpansive~}\siff
(\forall x\in X)
(\forall y\in X)
\quad\normsq{Tx-Ty}
\le 
\innp{x-y, Tx-Ty}.
\end{equation}

\begin{defn}[{\bf asymptotic regularity of 
operators vs. sequences}]
Let $T:X\to X$ and let $(x_n)_\nnn$
be a sequence in $X$.
Then $T$ is asymptotically regular if $(\forall x\in X)$
$T^n x-T^{n+1}x\to 0$ and $(x_n)_\nnn$ 
is asymptotically regular if $x_n-x_{n+1}\to 0$.
\end{defn}
\begin{fact}
\label{f:av:asreg}
Suppose that $T:X\to X$ is averaged; 
in particular, firmly nonexpansive.
Then $T$ is asymptotically regular.
\end{fact}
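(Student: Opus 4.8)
The plan is to leverage the averaged representation $T=(1-\alpha)\Id+\alpha N$ to upgrade plain nonexpansiveness of $T$ into a quantitative inequality that telescopes along an orbit. First I would record the $\alpha$-averaged analogue of the characterization \eqref{def:coco}: writing $N=\Id-\tfrac1\alpha(\Id-T)$, expanding $\normsq{Nx-Ny}\le\normsq{x-y}$, and rearranging gives, for all $x,y\in X$,
\begin{equation*}
\normsq{Tx-Ty}+\tfrac{1-\alpha}{\alpha}\normsq{(\Id-T)x-(\Id-T)y}\le\normsq{x-y}.
\end{equation*}
The firmly nonexpansive case is exactly $\alpha=\thalb$, where $\tfrac{1-\alpha}{\alpha}=1$ and one recovers the defining inequality verbatim, so it suffices to treat the averaged case.

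Next, fix $x\in X$ and set $x_n:=T^nx$ and $u_n:=x_n-x_{n+1}=(\Id-T)x_n$. Applying the displayed inequality to the pair $(x_n,x_{n+1})$ shows that $(\norm{u_n})_\nnn$ is nonincreasing — this part already follows from bare nonexpansiveness of $T$ — and, in addition, that $\sum_n\normsq{u_n-u_{n+1}}<\infty$. To convert this control on the \emph{increments} of the displacement into the desired conclusion $u_n\to 0$, I would pick $p\in\Fix T$ and apply the same inequality to the pair $(x_n,p)$, using $(\Id-T)p=0$, to obtain
\begin{equation*}
\normsq{x_{n+1}-p}+\tfrac{1-\alpha}{\alpha}\normsq{u_n}\le\normsq{x_n-p}.
\end{equation*}
Telescoping over $n$ and discarding the nonnegative remaining term $\normsq{x_{N+1}-p}$ yields $\tfrac{1-\alpha}{\alpha}\sum_{n}\normsq{u_n}\le\normsq{x_0-p}<\infty$, whence $u_n=T^nx-T^{n+1}x\to 0$, which is asymptotic regularity.

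The main obstacle is exactly the passage from \emph{``the displacement settles down''} to \emph{``the displacement vanishes.''} Averagedness by itself only forces the successive differences $u_n-u_{n+1}$ to be square-summable and the norms $\norm{u_n}$ to decrease to some limit $\ell\ge 0$, and in general one has $\ell>0$: a fixed-point-free isometry such as a translation $x\mapsto x+v$ is averaged (indeed firmly nonexpansive) yet has constant nonzero displacement. Consequently the step that genuinely does the work is the Fej\'er-type estimate against a point of $\Fix T$, and I would run the argument in that setting. The averaged constant $\tfrac{1-\alpha}{\alpha}>0$ is precisely what turns the telescoped inequality into square-summability of $(\norm{u_n})_\nnn$, and hence into $u_n\to 0$; nonexpansiveness alone gives only monotonicity and is insufficient.
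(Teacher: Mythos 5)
Your argument is correct as a proof of the statement \emph{with the additional hypothesis} $\fix T\neq\fady$, and in that setting it is essentially the argument behind the paper's proof, which is a pure citation: the paper points to \cite[Corollary~1.1~\&~Proposition~2.1]{Br-Reich77} and to \cite[Proposition~5.15(ii)~\&~Corollary~5.16(ii)]{BC2011}, and those results both carry the hypothesis $\fix T\neq\fady$ and are proved by the same Fej\'{e}r-type telescoping against a fixed point that you use. Your preliminary inequality
\begin{equation*}
\normsq{Tx-Ty}+\tfrac{1-\alpha}{\alpha}\normsq{(\Id-T)x-(\Id-T)y}\le\normsq{x-y}
\end{equation*}
is the paper's own \cref{eq:av:ineq} (quoted there from \cite[Lemma~2.1]{Comb04}), and the reduction of the firmly nonexpansive case to $\alpha=\thalb$ is standard and correct.

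The observation in your last paragraph deserves to be the headline: as printed, with no fixed-point hypothesis, the statement is \emph{false}, and your translation counterexample $x\mapsto x+v$, $v\neq 0$, proves it; so no blind proof could do better than what you did, namely prove the statement under the hypothesis that makes it true and show that hypothesis is indispensable. The defect is in the paper's formulation, not in your argument, since the references it cites assume $\fix T\neq\fady$. The slip even propagates within the paper: \cref{FB:collec}\ref{FB:collec:i:i} invokes \cref{f:av:asreg} for $\TFB[]$ with no consistency assumption, yet the paper's own example with constant operators $A\equiv a^*$, $B\equiv b^*$, $a^*+b^*\neq 0$ (in \cref{s:fb:normal}) gives $\TFB[]=\Id-(a^*+b^*)$, a fixed-point-free translation, which is not asymptotically regular. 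The correct fixed-point-free surrogate is what the paper proves separately in \cref{prop:av:tele}: the displacement $T^nx-T^{n+1}x$ converges to the minimal displacement vector $v_T$, so asymptotic regularity holds exactly when $v_T=0$, e.g., when $\fix T\neq\fady$. (The affine results in \cref{s:aff:app} are not endangered, because the asymptotic regularity genuinely needed there is that of the \emph{linear part} of the operator, which always fixes $0$.)
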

\begin{proof}
See \cite[Corollary~1.1~\&~Proposition~2.1]{Br-Reich77}
or \cite[Proposition~5.15(ii)~\&~Corollary~5.16(ii)]{BC2011}.
\end{proof}

\begin{fact}
\label{fact:inf:dis:vec}
Suppose that $T:X\to X$ is nonexpansive.
Then $\overline{\ran}(\Id -T)$ is nonempty 
closed and convex.
Consequently
the \emph{minimal displacement vector} 
 associated  with $T$ 
 is the unique well-defined vector
 \begin{equation}
 \label{def:vt}
 v_{T}:=P_{\overline{\ran} (\Id-T)} 0.
 \end{equation}
\end{fact}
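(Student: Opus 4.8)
The plan is to dispatch nonemptiness and closedness in one line and then concentrate all the work on convexity. Since $\ran(\Id-T)$ contains $(\Id-T)x$ for every $x\in X$, it is nonempty, so its closure $\overline{\ran}(\Id-T)$ is nonempty; and being a closure it is automatically closed. Thus only convexity requires an argument, and this is where the main difficulty lies.

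For convexity I would pass from the merely nonexpansive operator $T$ to a resolvent. First I would observe that $F:=\tfrac12(\Id+T)$ is firmly nonexpansive: this is the standard equivalence that $F$ is firmly nonexpansive if and only if $2F-\Id=T$ is nonexpansive, which follows directly from the characterization \eqref{def:coco}. Since every firmly nonexpansive operator is the resolvent of a (unique) maximally monotone operator (the Minty parametrization; compare \cref{JA:fne:orig}\ref{JA:fne:orig:i}), there is a maximally monotone $A$ with $F=J_A$. I would then compute the displacement map
\[
\Id-T=\Id-(2F-\Id)=2(\Id-J_A)=2J_{A^{-1}},
\]
where the last equality is the inverse resolvent identity \cref{JA:fne:orig}\ref{JA:fne:orig:ii}.

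Taking ranges and using that the range of a resolvent equals the domain of the underlying operator gives $\ran(\Id-T)=2\ran J_{A^{-1}}=2\dom(A^{-1})=2\ran A$. Because scaling by $2$ is a homeomorphism, $\overline{\ran}(\Id-T)=2\,\overline{\ran A}$. The key input is then the classical theorem (Rockafellar) that the closure of the range of a maximally monotone operator is convex --- equivalently, applied to $A^{-1}$, that $\overline{\dom(A^{-1})}$ is convex. This yields convexity of $\overline{\ran}(\Id-T)$ and completes the first assertion.

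Finally, the ``consequently'' clause is immediate: having shown that $C:=\overline{\ran}(\Id-T)$ is a nonempty closed convex subset of the Hilbert space $X$, the projection theorem guarantees that $P_C 0$ exists and is unique, so $v_T:=P_{\overline{\ran}(\Id-T)}0$ is well defined. I expect the convexity step to be the only real obstacle; nonemptiness, closedness, and the well-definedness of the projection are routine. The reduction to $\overline{\ran A}$ via \cref{JA:fne:orig} is what makes convexity clean --- a direct proof of convexity of $\overline{\ran}(\Id-T)$ is possible but considerably more delicate, so I would prefer the resolvent route.
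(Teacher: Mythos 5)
Your proof is correct. Note first that the paper does not actually prove this fact; it simply cites Baillon--Bruck--Reich, Bruck--Reich, and Pazy, so your argument is a genuine self-contained substitute, modulo two classical inputs: Minty's parametrization of firmly nonexpansive maps with full domain as resolvents of maximally monotone operators, and Rockafellar's theorem that the closure of the range (equivalently, of the domain) of a maximally monotone operator is convex. Every step of your reduction --- $F:=\tfrac12(\Id+T)$ is firmly nonexpansive, $F=J_A$ for some maximally monotone $A$, hence $\Id-T=2(\Id-J_A)=2J_{A^{-1}}$, $\ran(\Id-T)=2\dom A^{-1}=2\ran A$, and $\overline{\ran}(\Id-T)=2\,\overline{\ran}\,A$ --- checks out. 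It can, however, be shortened: for nonexpansive $T$ the displacement map $\Id-T$ is itself monotone, since $\langle x-y,(\Id-T)x-(\Id-T)y\rangle\ge\|x-y\|^2-\|x-y\|\,\|Tx-Ty\|\ge 0$, and it is continuous with full domain, hence maximally monotone; Rockafellar's virtual-convexity theorem then applies directly to $\Id-T$, with no need for the detour through $F$, Minty's theorem, and the inverse resolvent identity. This streamlined version is essentially the argument in the sources the paper cites. Either way the convexity input is the same classical theorem, and the remaining points (nonemptiness, closedness, and existence/uniqueness of $P_{\overline{\ran}(\Id-T)}0$ by the projection theorem for nonempty closed convex sets in Hilbert space) are routine, as you say.
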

\begin{proof}
See \cite{Ba-Br-Reich78},
\cite{Br-Reich77} or \cite{Pazy}.
\end{proof}
Unless otherwise stated, throughout this paper
we assume that
\begin{empheq}[box=\mybluebox]{equation*}
T:X\to X\;\;\text{
is nonexpansive.}
\end{empheq}

The following result is well-known 
when $T$ is firmly nonexpansive.
We include a simple proof, when $T$ is \emph{averaged}, 
for the sake 
of completeness (see also \cite[Lemma~3.9]{BDM:LNA:15}).
\begin{prop}
\label{prop:av:tele}
Suppose that $T$ is 
averaged
 and that $v_T:=P_{\overline{\ran}(\Id -T)}0\in
 \ran(\Id-T)$. Let $x\in X$.
 Then the following hold:
 \begin{enumerate}
\item
\label{prop:av:tele:i}
$\sum_{n=0}^\infty\normsq{T^n x-T^{n+1}x-v_T}<+\infty$.
 \item
 \label{prop:av:tele:ii}
$
T^nx-T^{n+1}x\to v_T,
$
equivalently; the sequence 
$(T^n x+nv_T)_\nnn$ is \emph{asymptotically regular}.
\end{enumerate}
\end{prop}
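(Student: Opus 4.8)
The plan is to turn the hypothesis $v_T\in\ran(\Id-T)$ into a genuine \emph{translation orbit} of $T$, and then to couple that orbit with the quantitative inequality satisfied by averaged operators so that the series in statement~\ref{prop:av:tele:i} telescopes. First I would make the averaged property quantitative: writing $T=(1-\alpha)\Id+\alpha N$ with $N$ nonexpansive and expanding $\normsq{Tx-Ty}$ by means of the identity $\normsq{(1-\alpha)a+\alpha b}=(1-\alpha)\normsq{a}+\alpha\normsq{b}-\alpha(1-\alpha)\normsq{a-b}$ together with $\Id-T=\alpha(\Id-N)$ and $\normsq{Nx-Ny}\le\normsq{x-y}$, one gets, for all $x,y\in X$,
\begin{equation}
\normsq{Tx-Ty}+\tfrac{1-\alpha}{\alpha}\normsq{(\Id-T)x-(\Id-T)y}\le\normsq{x-y}.
\end{equation}
For $\alpha=\tfrac12$ this is exactly firm nonexpansiveness, so this inequality is the natural averaged analogue of \cref{def:coco}.

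The key step, and the one I expect to carry the weight of the proof, is to produce a point $y$ along which $T$ acts by the constant translation $v_T$. Since $v_T\in\ran(\Id-T)$, I would pick $y\in X$ with $(\Id-T)y=v_T$ and then show by induction that $(\Id-T)(T^n y)=v_T$ for every $\nnn$. Indeed, nonexpansiveness of $T$ gives $\norm{(\Id-T)(T^{n+1}y)}=\norm{T^{n+1}y-T^{n+2}y}\le\norm{T^{n}y-T^{n+1}y}=\norm{v_T}$, whereas $v_T=P_{\overline{\ran}(\Id-T)}0$ is, by \cref{fact:inf:dis:vec}, the \emph{unique} minimal-norm element of the closed convex set $\overline{\ran}(\Id-T)$, so every element of $\ran(\Id-T)$ has norm at least $\norm{v_T}$. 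Hence $(\Id-T)(T^{n+1}y)$ is an element of $\overline{\ran}(\Id-T)$ of minimal norm, and uniqueness forces $(\Id-T)(T^{n+1}y)=v_T$. Telescoping $y-T^{n}y=\sum_{k=0}^{n-1}(\Id-T)(T^{k}y)=nv_T$ then yields $T^{n}y=y-nv_T$.

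With this in hand I would apply the averaged inequality to the two arguments $T^{n}x$ and $T^{n}y$. Setting $b_n:=\normsq{T^{n}x-T^{n}y}$, and using $T(T^{n}y)=T^{n+1}y$ together with $(\Id-T)(T^{n}y)=v_T$ and $(\Id-T)(T^{n}x)=T^{n}x-T^{n+1}x$, the inequality becomes
\begin{equation}
b_{n+1}+\tfrac{1-\alpha}{\alpha}\normsq{T^{n}x-T^{n+1}x-v_T}\le b_n.
\end{equation}
Summing over $n$ and discarding the nonnegative tail $b_{n+1}$ gives $\tfrac{1-\alpha}{\alpha}\sum_{n=0}^{\infty}\normsq{T^{n}x-T^{n+1}x-v_T}\le b_0=\normsq{x-y}<+\infty$, which is statement~\ref{prop:av:tele:i}. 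Statement~\ref{prop:av:tele:ii} is then immediate: the general term of a convergent series tends to $0$, so $T^{n}x-T^{n+1}x-v_T\to0$, i.e.\ $T^{n}x-T^{n+1}x\to v_T$; and the reformulation via asymptotic regularity of $(T^{n}x+nv_T)_\nnn$ is just the definition, since $(T^{n}x+nv_T)-(T^{n+1}x+(n+1)v_T)=(T^{n}x-T^{n+1}x)-v_T$.

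The hard part will be the middle paragraph: once the translation-orbit identity $T^{n}y=y-nv_T$ is established, the rest is a one-line telescoping. That identity is precisely what converts the fixed-point-free regime into one where the displacement is a true constant, and it is exactly here that one must use the \emph{minimality} and \emph{uniqueness} of $v_T$, not merely the membership $v_T\in\overline{\ran}(\Id-T)$. The only other point I would verify is that the averaged inequality is legitimately available; if one prefers not to derive it inline, it can be invoked as the standard characterization of $\alpha$-averaged maps.
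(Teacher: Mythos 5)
Your proof is correct and takes essentially the same route as the paper's: the paper invokes \cite[Lemma~2.1]{Comb04} for your averaged inequality and \cite[Proposition~2.5(iv)]{BM2015:AFF} for the translation-orbit identity $T^n y = y - nv_T$ (your $y$ with $(\Id-T)y=v_T$ is precisely a point of $\fix(v_T+T)$), and then, exactly as you do, applies the inequality to the pair $(T^n x, T^n y)$ and telescopes. The only difference is that you derive both ingredients inline --- in particular your minimal-norm-uniqueness induction is a self-contained proof of the cited orbit lemma --- which makes the argument self-contained but not structurally different.
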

 
\begin{proof}
It follows from
\cite[Lemma~2.1]{Comb04}
that
$(\exists \alpha \in \left]0,1\right[)$
such that
$(\forall x\in X)$ $(\forall y\in X)$
\begin{equation}
\label{eq:av:ineq}
\normsq{(\Id-T)x-(\Id-T)y}\le 
\frac{\alpha}{1-\alpha}\bk{\normsq{x-y}-\normsq{Tx-Ty}}.
\end{equation}
Moreover \cite[Proposition~2.5(vi)]{BM2015:AFF}
implies that $(T^n x+nv_T)_\nnn$ is \fejer monotone
with respect to $\fix (v_T+T)$.
Now let $n\in \NN$ and let
$y_0\in \fix (v_T+T)$.
Using \cite[Proposition~2.5(iv)]{{BM2015:AFF}}
we learn that $T^n y_0=y_0-nv_T$.
It follows from \cref{eq:av:ineq} applied with
$(x,y)$ replaced by $(T^n x, T^n y_0)$
that
\begin{subequations}
\label{eq:av:tele}
\begin{align}
\normsq{T^nx-T^{n+1}x- v_T}
&=\normsq{(\Id-T)T^n x-(\Id-T)T^ny_0}\\
&\le \frac{\alpha}{1-\alpha}\bk{\normsq{T^n x-T^n y_0}
- \normsq{T^{n+1} x-T^{n+1} y_0}}.
\end{align}
\end{subequations}
\ref{prop:av:tele:i}:
This follows from \cref{eq:av:tele}
by telescoping.
\ref{prop:av:tele:ii}:
This is a direct consequence of \ref{prop:av:tele:i}.
\end{proof}
\begin{proposition}
\label{prop:int:notfady}
Suppose that $v_T:=P_{\overline{\ran}(\Id -T)}0\in
 \ran(\Id-T)$ and 
that $\intr \fix(v_T+T)\neq \fady$. Then the following hold:
\begin{enumerate}
\item
\label{prop:int:notfady:i}
$\sum_{n=0}^\infty\norm{T^n x-T^{n+1}x-v_T}<+\infty$.
\item
\label{prop:int:notfady:ii}
 $(T^n x+nv_T)_\nnn$ converges strongly.
 \end{enumerate}
\end{proposition}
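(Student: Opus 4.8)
The plan is to exploit \fejer monotonicity together with the interior hypothesis, which is precisely what upgrades the square-summability of \cref{prop:av:tele} to plain summability, and thence to strong convergence; notably, averagedness of $T$ will play no role. Throughout I would write $z_n:=T^nx+nv_T$, so that $z_n-z_{n+1}=T^nx-T^{n+1}x-v_T$; thus \ref{prop:int:notfady:i} becomes the claim $\sum_{n=0}^\infty\norm{z_n-z_{n+1}}<+\infty$, while \ref{prop:int:notfady:ii} becomes strong convergence of $(z_n)_\nnn$. Since $v_T\in\ran(\Id-T)$ by hypothesis, the set $C:=\fix(v_T+T)$ is nonempty, and exactly as in the proof of \cref{prop:av:tele} --- using $T^ny_0=y_0-nv_T$ for $y_0\in C$ together with the nonexpansiveness of $T$ --- the sequence $(z_n)_\nnn$ is \fejer monotone with respect to $C$ (compare \cite[Proposition~2.5(vi)]{BM2015:AFF}).

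Next I would extract a quantitative per-step decrease from the interior hypothesis. Fix $c_0\in\intr C$ and choose $\rho>0$ so small that $c_0+u\in C$ whenever $\norm{u}\le\rho$. For a fixed $n\in\NN$ with $z_n\neq z_{n+1}$ (the case $z_n=z_{n+1}$ being trivial), set $u:=\rho(z_n-z_{n+1})/\norm{z_n-z_{n+1}}$, so that $\norm{u}=\rho$ and hence $c_0+u\in C$. Applying \fejer monotonicity at the admissible point $c_0+u$ and expanding the squares (the $\normsq{u}$ terms cancel) yields
\begin{equation}
\label{eq:plan:quant}
\normsq{z_{n+1}-c_0}-\normsq{z_n-c_0}\le 2\innp{z_{n+1}-z_n,u}=-2\rho\norm{z_n-z_{n+1}},
\end{equation}
where the final equality is the whole point of the directional choice of $u$. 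Rearranging gives $2\rho\norm{z_n-z_{n+1}}\le\normsq{z_n-c_0}-\normsq{z_{n+1}-c_0}$.

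What remains is routine. Summing this last inequality over $n=0,\dots,N$ telescopes to $2\rho\sum_{n=0}^N\norm{z_n-z_{n+1}}\le\normsq{z_0-c_0}-\normsq{z_{N+1}-c_0}\le\normsq{z_0-c_0}$, and letting $N\to\infty$ establishes \ref{prop:int:notfady:i}. For \ref{prop:int:notfady:ii}, summability of the increments forces $(z_n)_\nnn$ to be Cauchy, since for $m>n$ one has $\norm{z_m-z_n}\le\sum_{k=n}^{m-1}\norm{z_k-z_{k+1}}$, a tail of a convergent series; completeness of $X$ then delivers strong convergence.

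I expect the only genuinely substantive step to be the passage \cref{eq:plan:quant} from \fejer monotonicity to summability: the directional choice $u\propto z_n-z_{n+1}$ is exactly what converts the interior radius $\rho$ into a decrease of $\normsq{\cdot-c_0}$ that is \emph{linear} (rather than quadratic) in $\norm{z_n-z_{n+1}}$, and this is where the interior assumption is indispensable. The telescoping and the Cauchy argument afterward are standard, and --- in contrast to \cref{prop:av:tele} --- no averagedness of $T$ is needed anywhere.
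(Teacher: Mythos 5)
Your proof is correct and is essentially the paper's own argument (which follows \cite[Proposition~5.10]{BC2011}): both establish \fejer monotonicity of $(T^nx+nv_T)_\nnn$ with respect to $\fix(v_T+T)$ via \cite[Proposition~2.5]{BM2015:AFF}, test it against the point of the ball $\BB{c_0}{\rho}\subseteq\fix(v_T+T)$ shifted in the direction $z_n-z_{n+1}$, and telescope the resulting linear decrease before running the Cauchy argument. Your write-up is in fact slightly cleaner, since you keep the ball's center distinct from the starting point $x$ and handle the $z_n=z_{n+1}$ case explicitly, but there is no substantive difference in method.
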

\begin{proof}
The proof follows along the lines
of \cite[Proposition~5.10]{BC2011}.
\ref{prop:int:notfady:i}:
Let $x\in \fix (v_T+T)$ and let $r>0$ such that
$\BB{x}{r}\subseteq \fix (v_T+T)$. 
Obtain a sequence $(y_n)_\nnn$
defined as:
\begin{equation}
\label{eq:cases:int}
(\forall \nnn)\quad
y_n
=\begin{cases}
x,&\text{if~} x_{n+1}=x_n;\\
x-r\frac{x_{n+1}-x_n}{\norm{x_{n+1}-x_n}},
&\text{otherwise}.
\end{cases}
\end{equation}
Then $(y_n)_\nnn\subseteq \BB{x}{r}$.
Set $(\forall \nnn)$ $x_n:=T^n x+nv_T$.
It follows from \cite[Proposition~2.5(vi)]{BM2015:AFF}
that the sequence $(x_n)_\nnn$
is \fejer monotone with respect to $\fix (v+T)$, therefore
$(\forall \nnn) $  
$\normsq{x_{n+1}-y_n}\le \normsq{x_n-y_n}$;
equivalently
$(\forall \nnn) $  
$\normsq{x_{n+1}-x+(x-y_n)}\le \normsq{x_n-x+(x-y_n)}$.
Expanding and simplifying in view of
\cref{eq:cases:int} yield
$(\forall \nnn) $  
$\normsq{x_{n+1}-x}\le \normsq{x_n-x}-2\innp{x_n-x_{n+1},x-y_n}=
 \normsq{x_n-x}-2r\norm{x_n-x_{n+1}}
 $.
Telescoping yields
\begin{equation}
\label{eq:cauchy}
\sum_{n=0}^\infty\norm{x_n-x_{n+1}}\le 
\frac{1}{2r} \normsq{x_0-x}.
\end{equation}
  \ref{prop:int:notfady:ii}:
It follows from \cref{eq:cauchy} that
 $(x_n)_\nnn=(T^n x+nv)_\nnn$ is a Cauchy sequence
   and therefore it converges. 
\end{proof}
Let $S$ be nonempty subset of
$X$ and let $a\in X$. Before we proceed further we 
need the 
following useful translation formula (see, e.g., 
\cite[Proposition~3.17]{BC2011}).
\begin{equation}
\label{eq:trans:form}
(\forall x\in X) \quad
P_{a+S}x=a+P_S(x-a).
\end{equation}
\begin{ex}
Let $n\ge 1$.
Suppose\footnote{Let $\nnn$.
The \emph{positive orthant} in 
$\RR^n$ is $\RR^n_{+}=\left[0,+\infty\right[^n$
and the \emph{strictly positive orthant} 
in $\RR^n$ is $\RR^n_{++}=\left]0,+\infty\right[^n$.
Likewise we define the \emph{negative orthant} and the
\emph{strictly negative orthant} 
$\RR^n_{-}$ and $\RR^n_{--}$, respectively.
} 
that $X=\RR^n$, that $p\in \RR^n_{++}$
and that $T=p+P_{ \RR^n_{+}}$.
Then $T$ is (firmly) nonexpansive,
$\fix T=\fady$,
$\ran(\Id-T)=-p+ \RR^n_{-}$,
$v_T=-p\in
 \ran(\Id-T)$
 and
$\intr \fix(v_T+T)= \RR^n_{--}\neq \fady$.
 Consequently 
$\sum_{n=0}^\infty\Abs{T^n x-T^{n+1}x-v_T}<+\infty$
and $(T^n x+nv_T)_\nnn$ converges.
\end{ex}
\begin{proof}
The claim that $T$ is firmly nonexpansive (hence nonexpansive)
follows from e.g., \cite[Section~3]{GReich84}.
Now $\Id-T=\Id-p-P_{\RR^n_{+}}=-p+ P_{\RR^n_{-}}$,
hence $\ran (\Id-T)=-p+ \RR^n_{-}$
and $\fix T= \fady\siff 0\not\in \ran (\Id-T)=-p+ \RR^n_{-}\siff p\not\in \RR^n_{-}$, which is true.
Using \cref{eq:trans:form} with $(a, S)$ replaced by
$(-p, \RR^n_{-})$ we have 
$v_T=P_{-p+ \RR^n_{-}}0=-p+P_{\RR^n_{-}}p=-p$.
Consequently $v_T+T=-p+p+P_{ \RR^n_{+}}=P_{ \RR^n_{+}}$ 
and therefore $\fix (v_T+T)= \RR^n_{+}$
which implies that $\intr \fix (v_T+T)= \RR^n_{++}$.
Now apply \cref{prop:int:notfady}.
\end{proof}

\begin{cor}
\label{cor:RR:conv}
Suppose that $X=\RR$, that 
$\fix T=\fady$ and that
$v_T:=P_{\overline{\ran}(\Id -T)}0\in
 \ran(\Id-T)$.
Then $\intr \fix(v_T+T)\neq \fady$ and 
$\sum_{n=0}^\infty\Abs{T^n x-T^{n+1}x-v_T}<+\infty$.
Consequently $(T^n x+nv_T)_\nnn$ converges.
\end{cor}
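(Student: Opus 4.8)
The plan is to deduce everything from \cref{prop:int:notfady}. That proposition already delivers $\sum_{n=0}^\infty\norm{T^nx-T^{n+1}x-v_T}<+\infty$ (which on $\RR$ is exactly the asserted $\sum_{n=0}^\infty\Abs{T^nx-T^{n+1}x-v_T}<+\infty$) together with the strong convergence of $(T^nx+nv_T)_\nnn$, as soon as both of its hypotheses hold, namely $v_T\in\ran(\Id-T)$ and $\intr\fix(v_T+T)\neq\fady$. The first is assumed in the corollary, so the whole statement reduces to the single claim that, when $X=\RR$ and $\fix T=\fady$, one has $\intr\fix(v_T+T)\neq\fady$.

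To prove this claim I would exploit the special structure of the line. Write $g:=\Id-T$. Then $g$ is continuous (since $T$ is nonexpansive, hence Lipschitz) and monotone nondecreasing: for $x>y$ one has $(g(x)-g(y))(x-y)=(x-y)^2-(Tx-Ty)(x-y)\ge(x-y)^2-|Tx-Ty|\,|x-y|\ge0$, so $g(x)\ge g(y)$. Consequently $\ran g=\ran(\Id-T)$ is an interval, being the continuous image of the connected set $\RR$, and $\overline{\ran g}$ is a closed interval.

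Next I would locate $v_T$ among the endpoints of $\overline{\ran g}$. The assumption $\fix T=\fady$ is equivalent to $0\notin\ran g$, and since $\ran g$ is an interval it must then lie entirely on one side of $0$. Taking the case $\ran g\subseteq\left]0,+\infty\right[$ (the other case being symmetric), $\overline{\ran g}$ is a closed interval with left endpoint $a=\inf\ran g\ge0$, so its nearest point to $0$ is $v_T=P_{\overline{\ran g}}0=a$. Now the standing hypothesis $v_T\in\ran g$ forces $a\in\ran g$; that is, the infimum is attained, and $v_T=a=\min\ran g$ is realised at some $x_0\in\RR$.

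Finally I would turn this attained minimum into a nonempty interior. Since $x\in\fix(v_T+T)\iff x-Tx=v_T\iff g(x)=v_T$, we have $\fix(v_T+T)=g^{-1}(v_T)=g^{-1}(a)$. Monotonicity does the rest: for every $y\le x_0$ we get $a\le g(y)\le g(x_0)=a$, whence $g(y)=a$, so $\left]-\infty,x_0\right]\subseteq\fix(v_T+T)$ and therefore $\intr\fix(v_T+T)\neq\fady$; an appeal to \cref{prop:int:notfady} then closes the argument. The crux—and the only step that genuinely uses $X=\RR$—is this last one: it is the combination of monotonicity of $\Id-T$ with the attainment guaranteed by $v_T\in\ran(\Id-T)$ that promotes a single solution of $g(x)=v_T$ into an entire half-line of solutions, hence yielding the nonempty interior that \cref{prop:int:notfady} requires. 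Neither ingredient alone would suffice, so the care lies precisely in combining them.
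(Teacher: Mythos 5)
Your proof is correct, and it shares its skeleton with the paper's proof --- both reduce the corollary to establishing $\intr \fix(v_T+T)\neq \fady$ and then invoke \cref{prop:int:notfady} --- but you justify the key step in a genuinely different way. The paper simply cites \cite[Proposition~2.5(i)]{BM2015:AFF}, a Hilbert-space result asserting that $\fix(v_T+T)$ contains an unbounded interval, and then observes that an unbounded convex subset of $\RR$ has nonempty interior. You instead give a self-contained one-dimensional argument: $g:=\Id-T$ is continuous and nondecreasing on $\RR$ (monotonicity of $\Id-T$, valid for any nonexpansive $T$, reads as ``nondecreasing'' on the line), so $\ran g$ is an interval; $\fix T=\fady$ means $0\notin\ran g$, so this interval sits on one side of $0$ and $v_T$ is its endpoint nearest $0$; the hypothesis $v_T\in\ran(\Id-T)$ says this endpoint is attained, say $g(x_0)=v_T$; and monotonicity then forces $g\equiv v_T$ on an entire half-line terminating (or starting) at $x_0$. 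In effect you have reproved the cited proposition in the case $X=\RR$: your half-line $\left]-\infty,x_0\right]$ (resp.\ $\left[x_0,+\infty\right[$) is exactly the unbounded interval the paper imports. The paper's route is shorter and rests on machinery that works in arbitrary Hilbert spaces; yours is elementary, keeps the argument self-contained, makes transparent precisely where each hypothesis ($X=\RR$, $\fix T=\fady$, and attainment $v_T\in\ran(\Id-T)$) is used, and yields the extra structural information that $\fix(v_T+T)$ is in fact a half-line (or all of $\RR$).
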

\begin{proof}
It follows from \cite[Proposition~2.5(i)]{BM2015:AFF}
that $\fix (v+T)$ contains an unbounded interval,
and therefore, since $X=\RR$, we conclude that
$\intr \fix (v+T)\neq \fady$.
Now apply \cref{prop:int:notfady}.
(See also \cite[Theorem~3.6]{BDM:LNA:15}).
\end{proof}

%

\section{The forward-backward operator and duality}
\label{fb:duality}
The \emph{primal} problem for
the ordered pair
$(A,B)$ is 
\begin{equation}
\label{P:sum}
{\rm(P)} \;\; \text{find}\;\; x\in X \;\;\text{such that}\;\;
0\in Ax+Bx.
\end{equation}
The Attouch-Th\'{e}ra
\emph{dual} pair \cite{AT} for the ordered pair $(A,B)$
is the pair\footnote{Let $B:X\rras X$. Then 
$B^{\ovee}:=(-\Id)\circ B\circ (-\Id)$ and 
$B^{-\ovee}:=(B^{-1})^\ovee=(B^\ovee)^{-1}$ 
(see \cite[Equation~(10)]{JAT2012}).} 
$(A^{-1},B^{-\ovee})$
and the corresponding \emph{dual problem} is
\begin{equation}
\label{D:sum}
{\rm(D)} \;\; \text{find}\;\; x\in X \;\;\text{such that}\;\;
0\in A^{-1}x+B^{-\ovee}x.
\end{equation}
The sets of primal and dual
solutions for the ordered pair $(A,B)$, denoted respectively
by $Z$ and $K$ are
\begin{equation}
Z:=(A+B)^{-1}(0) \quad \text{and} 
\quad K:=(A^{-1}+B^{-\ovee})(0).
\end{equation}

From now on we assume that
\begin{empheq}[box=\mybluebox]{equation}
A:X\to X \;\;\text{is 
firmly nonexpansive.   }\;\;
\end{empheq}

The \emph{forward-backward} algorithm to solve 
\cref{P:sum} iterates the 
operator
\begin{equation}
\label{eq:def:FB}
T_{\FB}:=T_{\FB(A,B)}:=J_B(\Id-A).
\end{equation}
On the other hand the \emph{Douglas-Rachford}
algorithm to solve 
\cref{P:sum} iterates the operator
\begin{equation}
\label{eq:def:DR}
T_{\DR}:=T_{\DR(A,B)}:=\Id-J_A+J_BR_A.
\end{equation}
Let $x\in X$. If $Z\neq \fady $ then
each of the sequences
$(T_{\FB}^n x)_\nnn$ 
(see, e.g., \cite[Corollary~6.5]{Comb04} 
or \cite[Section~25.3]{BC2011})
 and $(J_A T_{\DR}^n x)_\nnn$ 
 (see, e.g., \cite{Svaiter} or \cite{L-M79})
 converges weakly to 
 a (possibly different) solution of
 \cref{P:sum}.
 \begin{rem}
\label{rem:fnon:coco}
Let $\alpha>0$.
Since $\zer (A+B)=\zer (\alpha A+\alpha B)$,
the assumption that $A$
is firmly nonexpansive could be replaced by
$A$ is $\alpha$-cocoercive\footnote{Recall that $A:X\to X$
is cocoercive
if $(\exists \alpha>0)$ such that $\alpha A$
is firmly nonexpansive.}. 
In this case 
\cref{eq:def:FB} and \cref{eq:def:DR}
can be applied with the ordered pair
$(A,B)$ is replaced 
by $(\alpha A,\alpha B)$. 
\end{rem}

 \begin{defn}[{\bf paramonotone and $3^*$
 monotone operators}]  
Let $C\colon X\rras X$ be monotone.
Then 
\begin{enumerate}
\item
$C$ is \emph{paramonotone}\footnote{For 
detailed discussion and examples of
paramonotone operators we refer the reader to \cite{Iusem98}.}
 if 
$(\forall (x,u)\in \gra C)$
$(\forall (y,v)\in \gra C)$ we have
\begin{equation}
\left.
\begin{array}{c}
 (x,u)\in \gra C\\
(y,v)\in \gra C\\
\innp{x-y,u-v}=0
\end{array}
\right\}
\quad\RA\quad 
\big\{(x,v),(y,u)\big\}\subseteq \gra C.
\end{equation}
\item
$C$ is \emph{$3^*$ monotone}\footnote{For 
detailed discussion and examples of
$3^*$ monotone operators we refer the
reader to \cite{Br-H}.} 
(this is also known as \emph{rectangular}) if 
\begin{equation}
(\forall x\in \dom C)(\forall v\in \ran C)
\qquad \inf_{(z,w)\in \gra C}\innp{x-z,v-w}>-\infty.
\end{equation}
\end{enumerate}
\end{defn}
\begin{lem}
\label{lem:coc:prop}
The following hold:
\begin{enumerate}
\item
\label{lem:coc:mm}
$A$ is maximally monotone.
\item
\label{lem:coc:para}
$A$ is paramonotone.
\item
\label{lem:coc:3*}
$A$ is $3^*$ monotone.
\end{enumerate}
\end{lem}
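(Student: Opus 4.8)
The plan is to obtain all three properties directly from the firm-nonexpansiveness characterization \eqref{def:coco}, namely $\normsq{Ax-Ay}\le\innp{x-y,Ax-Ay}$ for all $x,y\in X$, exploiting throughout that $A$ is single-valued with $\dom A=X$. For \ref{lem:coc:mm}, monotonicity is read off immediately, since \eqref{def:coco} gives $\innp{x-y,Ax-Ay}\ge\normsq{Ax-Ay}\ge0$. To promote this to maximality I would note that $A$, being nonexpansive, is Lipschitz continuous and everywhere defined, and that a single-valued, everywhere-defined, continuous monotone operator on a Hilbert space is maximally monotone (e.g.\ \cite[Corollary~20.28]{BC2011}). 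Alternatively, one may invoke that a firmly nonexpansive operator is the resolvent $J_M$ of the maximally monotone operator $M:=A^{-1}-\Id$ and conclude via \cref{JA:fne:orig}.

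For \ref{lem:coc:para}, single-valuedness collapses the paramonotonicity implication to the statement that $\innp{x-y,Ax-Ay}=0$ forces $Ax=Ay$ (indeed, once this holds, $(x,Ay)\in\gra A$ simply records $Ay=Ax$, and likewise for $(y,Ax)$). This is again immediate from \eqref{def:coco}: the hypothesis yields $\normsq{Ax-Ay}\le\innp{x-y,Ax-Ay}=0$, whence $Ax=Ay$.

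The only genuine computation is \ref{lem:coc:3*}. I would fix $x\in\dom A=X$ and $v=Ay\in\ran A$, and estimate $\innp{x-z,Ay-Az}$ uniformly in $z\in X$. Splitting $x-z=(x-y)+(y-z)$ gives $\innp{x-z,Ay-Az}=\innp{x-y,Ay-Az}+\innp{y-z,Ay-Az}$; I would bound the second summand below by $\normsq{Ay-Az}$ using \eqref{def:coco} applied to $y,z$, and the first below by $-\norm{x-y}\,\norm{Ay-Az}$ by Cauchy--Schwarz. Writing $t:=\norm{Ay-Az}\ge0$, this yields $\innp{x-z,Ay-Az}\ge t^2-\norm{x-y}\,t\ge-\tfrac14\normsq{x-y}$, a lower bound independent of $z$, so the defining infimum exceeds $-\infty$. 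No step is a real obstacle, as the content is carried entirely by \eqref{def:coco}; the two points deserving care are supplying continuity (not merely monotonicity) for the maximality claim in \ref{lem:coc:mm}, and completing the square in $t$ for \ref{lem:coc:3*}.
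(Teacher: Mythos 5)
Your proof is correct, but it takes a genuinely different route from the paper's. The paper disposes of all three items by citation: maximal monotonicity \ref{lem:coc:mm} is \cite[Example~20.27]{BC2011}, while for \ref{lem:coc:para} and \ref{lem:coc:3*} it writes $A=\Id-(\Id-A)$, observes that $\Id-A$ is firmly nonexpansive, and invokes \cite[Theorem~6.1]{BWY2014}, which says that displacement mappings of such operators are both paramonotone and rectangular. You instead prove everything from first principles out of \eqref{def:coco}: monotonicity and the paramonotonicity reduction $\innp{x-y,Ax-Ay}=0\Rightarrow Ax=Ay$ are immediate; maximality follows from continuity plus full domain (this is exactly the content behind the cited example, and your alternative via $A=J_M$ with $M=A^{-1}-\Id$ also works, though maximal monotonicity of $M$ itself requires a Minty-type argument using $\ran(\Id+M)=\ran A^{-1}=\dom A=X$, so it is not quite a free lunch); and for \ref{lem:coc:3*} you split $x-z=(x-y)+(y-z)$, apply \eqref{def:coco} to the second summand, Cauchy--Schwarz to the first, and complete the square in $t=\norm{Ay-Az}$ to obtain the uniform lower bound $-\tfrac14\normsq{x-y}$. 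That computation is precisely the classical Brezis--Haraux-type argument that cocoercive operators are rectangular, so in effect you reprove the special case of \cite[Theorem~6.1]{BWY2014} that the lemma needs. What your approach buys is a short, self-contained, elementary proof; what the paper's buys is brevity and placement of the lemma within known structural results on firmly nonexpansive mappings.
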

\begin{proof}
\ref{lem:coc:mm}:
This is \cite[Example~20.27]{BC2011}.
\ref{lem:coc:para} \&
\ref{lem:coc:3*}: Note that $A=\Id-(\Id-A)$
 and $\Id -A$ is firmly nonexpansive.
 The conclusion follows from 
 \cite[Theorem~6.1]{BWY2014}.
\end{proof}

\begin{prop}
\label{FB:collec}
The following hold:
\begin{enumerate}
\item
\label{FB:collec:i}
$\TFB[]$ is averaged.
\item
\label{FB:collec:i:i}
$\TFB[]$ is asymptotically regular.
 \item
 \label{FB:collec:iv}
 $K$ is a singleton.
 \item
\label{FB:collec:ii}
$Z=\fix \TFB[]$.
\item
\label{FB:collec:iii}
$K=A(Z)=A(\fix \TFB[])$.

 \end{enumerate}
\end{prop}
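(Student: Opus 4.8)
The plan is to treat the five items in the order (i), (ii), (iv), (v), (iii), since the singleton assertion (iii) rests both on the duality identity (v) and on a constancy property of $A$ on $Z$. For (i), I would first note that the defining inequality for firm nonexpansiveness is symmetric in $A$ and $\Id-A$, so $\Id-A$ is firmly nonexpansive; moreover $J_B$ is firmly nonexpansive by \cref{JA:fne:orig}\ref{JA:fne:orig:i} applied to the maximally monotone operator $B$. Since a firmly nonexpansive operator is averaged (indeed $\tfrac12$-averaged) and the composition of two averaged operators is averaged, $\TFB[]=J_B(\Id-A)$ is averaged. Item (ii) is then immediate from (i) together with \cref{f:av:asreg}.

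For (iv) I would unwind the fixed-point equation: $x=\TFB[]x=J_B(x-Ax)$ holds iff $x-Ax\in x+Bx$, i.e. $-Ax\in Bx$, i.e. $0\in Ax+Bx$; hence $\fix\TFB[]=\zer(A+B)=Z$. For (v), the central computation, I would pass to inverses via $B^{-\ovee}=(-\Id)\circ B^{-1}\circ(-\Id)$, which gives the equivalence $-x\in B^{-\ovee}u\Leftrightarrow -u\in Bx$. Then both inclusions follow by bookkeeping: if $x\in Z$ and $u:=Ax$, then $x\in A^{-1}u$ and $-u\in Bx$ yield $-x\in B^{-\ovee}u$, so $0=x+(-x)\in A^{-1}u+B^{-\ovee}u$, i.e. $u\in K$; conversely any $u\in K$ produces some $x\in A^{-1}u$ with $-x\in B^{-\ovee}u$, forcing $u=Ax$ and $-u\in Bx$, hence $x\in Z$ and $u\in A(Z)$. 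This proves $K=A(Z)$, and (iv) upgrades it to $K=A(Z)=A(\fix\TFB[])$.

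For (iii) I would show that $A$ is constant on $Z$. Given $x_1,x_2\in Z$ we have $-Ax_i\in Bx_i$, so monotonicity of $B$ gives $\innp{x_1-x_2,Ax_1-Ax_2}\le 0$; feeding this into the cocoercivity characterization \cref{def:coco} yields $\normsq{Ax_1-Ax_2}\le\innp{x_1-x_2,Ax_1-Ax_2}\le 0$, whence $Ax_1=Ax_2$. Thus $A(Z)$ is a single point, and by (v) so is $K$.

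The items (i), (ii), (iv) are essentially appeals to standard facts; the real work sits in (v) and (iii). I expect the inverse-and-sign bookkeeping in (v) --- in particular handling $B^{-\ovee}$ and the reversal $-u\in Bx\Leftrightarrow -x\in B^{-\ovee}u$ correctly --- to be the step most prone to error, while the short constancy argument for (iii) is the conceptual crux. One subtlety to flag is that the identity $K=A(Z)$ makes $K$ and $Z$ empty or nonempty simultaneously, so (iii) should be read as: when $Z\neq\fady$, the firmly nonexpansive operator $A$ collapses the whole solution set to one point; in the genuinely inconsistent case both sets are empty.
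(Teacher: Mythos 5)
Your proposal is correct, and it takes a genuinely different route from the paper on the three substantive items. For $Z=\fix\TFB[]$ the paper simply cites \cite[Proposition~25.1(iv)]{BC2011}, whereas you unwind the fixed-point equation directly; both are fine. The real divergence is in the duality items: the paper proves that $K$ is (at most) a singleton \emph{first}, by combining the solution correspondence and the orthogonality identity $\innp{z_1-z_2,k_1-k_2}=0$ from \cite{JAT2012} with \emph{paramonotonicity} of $A$ (\cref{lem:coc:prop}, imported from \cite{BWY2014}), and only then deduces $K=A(Z)$ from singleton-ness. You invert the order: you prove $K=A(Z)$ by elementary sign bookkeeping with $B^{-\ovee}$ (your equivalence $-x\in B^{-\ovee}u\siff -u\in Bx$ is exactly right given $B^{-\ovee}=(-\Id)\circ B^{-1}\circ(-\Id)$), and then obtain uniqueness from constancy of $A$ on $Z$: monotonicity of $B$ applied to $-Ax_i\in Bx_i$ gives $\innp{x_1-x_2,Ax_1-Ax_2}\le 0$, and \cref{def:coco} upgrades this to $Ax_1=Ax_2$. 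In effect you re-prove inline the one consequence of paramonotonicity that is actually needed, so your argument is self-contained where the paper leans on the Attouch--Th\'era machinery. The paper's route buys brevity and situates the result in that general framework; your route buys a cleaner logical dependence ($K=A(Z)$ does not require uniqueness) and makes explicit the caveat, which the paper's own proof silently shares, that ``$K$ is a singleton'' really means ``$K$ has at most one element,'' since $K=A(Z)$ is empty precisely when $Z=\fady$. Items (i) and (ii) match the paper's proof up to replacing citations (\cite{Comb04}, \cite{Rock76}) by the symmetry of the firm-nonexpansiveness inequality under $A\leftrightarrow\Id-A$ and the standard composition-of-averaged-operators fact.
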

\begin{proof}
\ref{FB:collec:i}:
Since $A$ is firmly nonexpansive
so is $\Id-A$ (see, e.g., \cite[Lemma~2.3]{Comb04}). 
Note that $J_B$  is firmly nonexpansive by
\cite[Proposition~1(c)]{Rock76}.
It follows from \cite[Remark~4.24(iii)]{BC2011}
that $\Id-A$ and $J_B$ are $1/2$-averaged
and therefore $T=J_B(\Id -A)$ is $2/3$-averaged
by 
\cite[Lemma~2.2(iii)]{Comb04}.
\ref{FB:collec:i:i}:
Combine \ref{FB:collec:i} and \cref{f:av:asreg}.
\ref{FB:collec:iv}: 
Let $k_1$ and $k_2$ be in $K$.
It follows from \cite[Proposition~2.4]{JAT2012}
that $(\exists z_i \in Z)$ 
such that $k_i\in Az_i\cap(-Bz_i)=Az_i$,
$i\in \stb{1,2}$.
 Since $A$ is single-valued, we conclude that
 $k_i=Az_i$, $i\in \stb{1,2}$.
 Using \cite[Corollary~2.13]{JAT2012}
 we learn that $\innp{z_1-z_2,k_1-k_2}
 =\innp{z_1-z_2, Az_1-Az_2}=0$.
 Now combine with \cref{lem:coc:prop}\ref{lem:coc:para}
  and use that $A$ is single-valued to learn that $k_1=k_2$. 
\ref{FB:collec:ii}:
This follows from \cite[Proposition~25.1(iv)]{BC2011}.
\ref{FB:collec:iii}: 
In view of \ref{FB:collec:iv}, let 
$K=\stb{k}$.
It follows from \cite{JAT2012}
that $(\forall z\in Z)$ $k=Az\cap(-Bz)$,
which implies, since $A$ is single-valued, 
that $k=Az$; equivalently $K=\stb{k}=A(Z)$.
Now combine with \ref{FB:collec:ii}.
\end{proof}
\begin{fact}[{\bf Baillon-Haddad}]
\label{f:grad:fne}
Let $f\colon X\to \RR$ be convex and differentiable. 
Then 
\begin{equation}
\grad f \;\;\text{is nonexpansive}\; \siff \;\grad f\;\;
\text{is firmly nonexpansive}.
\end{equation}
\end{fact}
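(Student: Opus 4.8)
The plan is to prove the two implications separately, the forward one being routine and the reverse one carrying the real content of the Baillon--Haddad theorem. For ``$\Leftarrow$'', if $\grad f$ is firmly nonexpansive then the characterization \eqref{def:coco} gives $\normsq{\grad f(x)-\grad f(y)}\le\innp{x-y,\grad f(x)-\grad f(y)}$, and Cauchy--Schwarz bounds the right-hand side by $\norm{x-y}\,\norm{\grad f(x)-\grad f(y)}$; dividing yields $\norm{\grad f(x)-\grad f(y)}\le\norm{x-y}$, so $\grad f$ is nonexpansive.

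For the harder implication ``$\Rightarrow$'', assume $\grad f$ is nonexpansive, i.e.\ $1$-Lipschitz. By \eqref{def:coco} it suffices to establish the cocoercivity estimate $\normsq{\grad f(x)-\grad f(y)}\le\innp{x-y,\grad f(x)-\grad f(y)}$ for all $x,y\in X$. First I would record the descent inequality: since $t\mapsto f(w+t(z-w))$ has derivative $\innp{z-w,\grad f(w+t(z-w))}$, integrating over $[0,1]$ and using the $1$-Lipschitzness of $\grad f$ gives
\begin{equation*}
f(z)\le f(w)+\innp{z-w,\grad f(w)}+\tfrac12\normsq{z-w}\qquad(\forall w,z\in X).
\end{equation*}

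The crux is a one-sided bound obtained by a shifting trick. Fix $y$ and set $g(x):=f(x)-\innp{x,\grad f(y)}$; then $g$ is convex and differentiable with $\grad g=\grad f-\grad f(y)$ still $1$-Lipschitz, and $\grad g(y)=0$, so by convexity $y$ minimizes $g$. Applying the descent inequality to $g$ at the point $x$ and then minimizing the resulting quadratic upper bound over $z$ (the minimizer being $z=x-\grad g(x)$) yields $\min g\le g(x)-\tfrac12\normsq{\grad g(x)}$; since $\min g=g(y)$, translating back to $f$ gives
\begin{equation*}
f(x)-f(y)-\innp{x-y,\grad f(y)}\ge\tfrac12\normsq{\grad f(x)-\grad f(y)}.
\end{equation*}
Swapping the roles of $x$ and $y$ produces the companion inequality, and adding the two causes the function values to cancel, leaving exactly $\innp{x-y,\grad f(x)-\grad f(y)}\ge\normsq{\grad f(x)-\grad f(y)}$, which is the desired firm nonexpansiveness by \eqref{def:coco}.

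I expect the main obstacle to be the one-sided bound in the last paragraph: it hinges on correctly exploiting that the shifted function $g$ is minimized at $y$ and then \emph{sharpening} the descent inequality by minimizing its right-hand quadratic over $z$, rather than merely evaluating it at a chosen point. The remaining algebra---translating back to $f$, symmetrizing, and cancelling the function values---is routine. A secondary point to verify is that the descent inequality holds in an arbitrary Hilbert space, which follows from the displayed integral representation since the integrand is continuous.
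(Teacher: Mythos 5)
Your proof is correct, but it takes a genuinely different route from the paper: the paper does not prove this fact at all, it simply cites Baillon--Haddad's original article (Corollaire~10 of \cite{BH1979}). Your argument is the now-standard elementary proof for Lipschitz gradients: the easy direction via Cauchy--Schwarz applied to \cref{def:coco}, and the hard direction via the descent lemma, the shift $g(x)=f(x)-\innp{x,\grad f(y)}$ (whose minimizer is $y$ because $\grad g(y)=0$ and $g$ is convex), sharpening the descent bound by minimizing its quadratic majorant at $z=x-\grad g(x)$, and then symmetrizing in $x$ and $y$ so the function values cancel. All steps check out: $\grad g$ inherits the Lipschitz constant $1$ of $\grad f$, the integral representation of $f$ along segments is legitimate since $\grad f$ is continuous (being nonexpansive) in the direction where it is needed, and the symmetrized inequality is exactly the cocoercivity characterization \cref{def:coco}. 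What your approach buys is self-containedness and elementarity---no external reference is needed, and the argument works in any real Hilbert space. What the paper's citation buys is generality: Baillon and Haddad's Corollaire~10 sits inside a broader theory (angle-bounded and $n$-cyclically monotone operators) and yields the equivalence for arbitrary Lipschitz constants $\beta$, i.e., $\beta$-Lipschitz gradient $\siff$ $\tfrac{1}{\beta}$-cocoercive gradient, of which the statement here is the case $\beta=1$ (and which the paper invokes implicitly in the remark following \cref{prop:func} when it rescales $f$ by $\beta$).
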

\begin{proof}
See \cite[Corollaire~10]{BH1979}.
\end{proof}

\begin{fact}
\label{fact:func:sd}
Let $f\colon X\to \left]-\infty,+\infty\right]$ 
be convex, lower semicontinuous, and proper.
Then the following hold:
\begin{enumerate}
\item
\label{fact:func:sd:mm}
$\pt f$ is maximally monotone.
\item
\label{eq:sb:inv:conj} 
$(\pt f)^{-1}=\pt f^*.$
\end{enumerate}
\end{fact}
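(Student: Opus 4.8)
The plan is to treat the two items by essentially independent classical arguments: item~\ref{fact:func:sd:mm} is Rockafellar's maximal monotonicity theorem, while item~\ref{eq:sb:inv:conj} is a direct consequence of the Fenchel--Young equality together with biconjugation. I would first record that $\pt f$ is monotone, which is immediate: if $u\in\pt f(x)$ and $v\in\pt f(y)$, then adding the two subgradient inequalities $f(y)\ge f(x)+\innp{u,y-x}$ and $f(x)\ge f(y)+\innp{v,x-y}$ yields $\innp{x-y,u-v}\ge 0$.

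For the maximality in~\ref{fact:func:sd:mm}, I would invoke Minty's characterization: a monotone operator is maximally monotone as soon as $\Id+\pt f$ is surjective. To prove surjectivity, fix $z\in X$ and consider $g:=f+\thalb\normsq{\cdot-z}$. Since $f$ is convex, lower semicontinuous and proper and the quadratic term is continuous, convex and coercive, $g$ is proper, lower semicontinuous and strongly (hence strictly) convex, so it possesses a unique minimizer $x$. Because the quadratic summand is finite and continuous, the sum rule for subdifferentials applies and gives $0\in\pt g(x)=\pt f(x)+(x-z)$, that is, $z-x\in\pt f(x)$, i.e.\ $z\in(\Id+\pt f)(x)$. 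Thus $\ran(\Id+\pt f)=X$ and maximality follows. (Equivalently, one may simply cite Rockafellar's and Minty's theorems.)

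For~\ref{eq:sb:inv:conj}, the engine is the equivalence $u\in\pt f(x)\siff f(x)+f^*(u)=\innp{x,u}$, which is just the definition of $\pt f$ unwound through the definition of the conjugate: $u\in\pt f(x)$ says $\innp{u,y}-f(y)\le\innp{u,x}-f(x)$ for all $y$, and taking the supremum over $y$ turns this into the Fenchel--Young equality $f^*(u)=\innp{u,x}-f(x)$. Since $f$ is convex, lower semicontinuous and proper, the Fenchel--Moreau theorem gives $f^{**}=f$, so $f^*$ is again convex, lower semicontinuous and proper, and the same characterization applied to $f^*$ reads $x\in\pt f^*(u)\siff f^*(u)+f(x)=\innp{u,x}$. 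The two right-hand sides coincide, whence $u\in\pt f(x)\siff x\in\pt f^*(u)$, which is exactly $(\pt f)^{-1}=\pt f^*$.

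The routine parts are the monotonicity of $\pt f$ and the conjugacy computation in~\ref{eq:sb:inv:conj}; the genuine content, and the only real obstacle, is the maximality in~\ref{fact:func:sd:mm}. Everything there hinges on the existence of the minimizer of $g$ (a coercivity plus weak lower semicontinuity argument in the Hilbert space $X$) and on the validity of the subdifferential sum rule in the presence of the continuous quadratic term; both are standard, but they are where the actual work lies.
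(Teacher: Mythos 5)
Your proposal is correct, but it is doing something different from the paper: the paper offers no argument at all for this fact, simply citing the literature (Rockafellar's 1970 theorem for (i); Rockafellar, Goss\'ez, or Bauschke--Combettes for (ii)), whereas you give a self-contained proof. Your route for (i) is the classical Hilbert-space argument: monotonicity from adding the two subgradient inequalities, then surjectivity of $\Id+\pt f$ via the unique minimizer of $f+\thalb\normsq{\cdot-z}$ (Fermat's rule plus the Moreau--Rockafellar sum rule, which applies because the quadratic term is continuous everywhere and whose proof rests on separation, so there is no circularity), and finally the easy direction of Minty's theorem, namely that a monotone operator with $\ran(\Id+A)=X$ is maximal. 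This is essentially the proof in Bauschke--Combettes; it is simpler than Rockafellar's original argument but correspondingly less general --- Rockafellar's theorem holds in arbitrary Banach spaces, while your argument uses the Hilbert (reflexive) structure for existence of the minimizer. Since the paper works exclusively in a Hilbert space, that loss of generality costs nothing here. Your proof of (ii) via the Fenchel--Young equality $u\in\pt f(x)\siff f(x)+f^*(u)=\innp{x,u}$ together with the Fenchel--Moreau biconjugation theorem $f^{**}=f$ is exactly the standard argument behind the cited Corollary~16.24 of Bauschke--Combettes, and, as you note, it is independent of (i). The two genuine technical points --- the affine minorization of $f$ needed for coercivity of $g$, and the validity of the sum rule --- are correctly identified and correctly dispatched, so there is no gap.
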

\begin{proof}
\ref{fact:func:sd:mm}: See, e.g., \cite[Theorem~A]{Rock1970}. 
\ref{eq:sb:inv:conj}: See, e.g., \cite[Remark~on~page~216]{Rock1970},
\cite[Th\'{e}or\`{e}me~3.1]{Gossez}, 
or \cite[Corollary~16.24]{BC2011}.
\end{proof}
Suppose that $C$ is a nonempty closed convex subset of $X$.
It is well-known (see, e.g., \cite[Example~23.4]{BC2011}) that 
\begin{equation}
\label{eq:res:proj}
J_{N_C}=P_C.
\end{equation}
\begin{prop}
\label{prop:func}
Suppose that 
$f\colon X\to \RR$ is convex and differentiable 
such that $\grad f$ is nonexpansive
and that
$g\colon X\to \left]-\infty,+\infty\right]$
is convex, lower semicontinuous, and proper.
Suppose that $A=\grad f$ and that
$B=\pat g$. 
Then the following hold\footnote{Let $h:X\to \left]-\infty,+\infty\right]$
be proper. The \emph{set of minimizers of} $h$,
$\stb{x\in X~|~h(x)=\inf h(X)}$, is 
denoted by $\argmin h$.}$^{,}$\footnote{Suppose that 
$g\colon X\to \left]-\infty,+\infty\right]$
is convex, lower semicontinuous, and proper.
Then $\prox_g$ is the \emph{Moreau prox operator}
 associated with $g$ defined 
by $\prox_g:X\to X:x\mapsto (\Id+\pt g)^{-1}(x)
=\argmin_{y\in X} 
\bk{g(y)+\tfrac{1}{2}\norm{x-y}^2}$.
}:
\begin{enumerate}
\item
\label{prop:func:i}
$\fix \TFB[]=\zer(\grad f+\pt g)=\argmin(f+g)$.
\item
\label{prop:func:ii}
$\TFB[]=\prox_g(\Id-\grad f)$.
 \end{enumerate}   
 If in addition,
  $g=\iota_V$ where $V$
 is a nonempty closed convex subset of $X$,
 then we have
 \begin{enumerate}
 \setcounter{enumi}{2}
 \item  
 \label{prop:func:iv}
 $\TFB[]=P_V(\Id-\grad f)$.
 \end{enumerate}
\end{prop}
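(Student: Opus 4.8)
The plan is to treat part (ii) first, since it is essentially a matter of unwinding definitions, then derive (i) from it together with \cref{FB:collec}, and finally read off (iii) as a specialization. Before starting, I would record a consistency check: the standing assumption of this section is that $A$ is firmly nonexpansive, whereas here $A=\grad f$ with $\grad f$ merely assumed nonexpansive. By the Baillon--Haddad theorem (\cref{f:grad:fne}), $\grad f$ is nonexpansive if and only if it is firmly nonexpansive, so $A=\grad f$ does satisfy the blanket hypothesis and the forward-backward framework applies; moreover $B=\pt g$ is maximally monotone by \cref{fact:func:sd}\ref{fact:func:sd:mm}. For (ii) itself, the definition \cref{eq:def:FB} of the forward-backward operator applied with $(A,B)=(\grad f,\pt g)$ gives $\TFB[]=J_{\pt g}(\Id-\grad f)$, and since $J_{\pt g}=(\Id+\pt g)^{-1}=\prox_g$ by the definition of the Moreau proximity operator, this is exactly $\TFB[]=\prox_g(\Id-\grad f)$.

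For (i), I would first apply \cref{FB:collec}\ref{FB:collec:ii}, which gives $\fix\TFB[]=Z=(A+B)^{-1}(0)=\zer(\grad f+\pt g)$; this is the first claimed equality. For the second equality I would invoke Fermat's rule together with the subdifferential sum rule $\pt(f+g)=\pt f+\pt g=\grad f+\pt g$, so that $x\in\argmin(f+g)\siff 0\in\pt(f+g)(x)\siff 0\in\grad f(x)+\pt g(x)\siff x\in\zer(\grad f+\pt g)$. Here I use that $f$ convex and differentiable forces $\pt f=\{\grad f\}$ to be single-valued.

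For (iii), specializing to $g=\iota_V$ gives $\pt g=\pt\iota_V=N_V$, hence $B=N_V$ and $J_B=J_{N_V}=P_V$ by \cref{eq:res:proj}; equivalently $\prox_{\iota_V}=P_V$. Substituting into the formula of (ii) yields $\TFB[]=P_V(\Id-\grad f)$. The one technical point deserving care is the constraint qualification behind the sum rule in (i): because $f$ is real-valued and convex on all of $X$, it is continuous, so $\dom f=X$ and the identity $\pt(f+g)=\grad f+\pt g$ holds without a further qualification. Once this is in place, the remainder of the argument is bookkeeping, so I expect no substantive obstacle beyond verifying that qualification.
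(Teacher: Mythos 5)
Your proposal is correct and follows essentially the same route as the paper: firm nonexpansiveness of $\grad f$ via Baillon--Haddad, the identity $\fix\TFB[]=Z$ from \cref{FB:collec}\ref{FB:collec:ii}, the sum rule $\pt(f+g)=\grad f+\pt g$ (valid since $\dom f=X$) combined with Fermat's rule for the second equality in (i), and the identifications $J_{\pt g}=\prox_g$ and $\prox_{\iota_V}=P_V$ for (ii) and (iii). The only difference is cosmetic ordering of the parts, and your explicit remark on the constraint qualification matches the paper's citation of the Burachik--Jeyakumar sum rule.
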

\begin{proof}
Note that $\dom f=X$
 and that $A=\grad f$ is
 firmly nonexpansive by \cref{f:grad:fne}.
\ref{prop:func:i}:
The first identity is \cref{FB:collec}\ref{FB:collec:ii}
applied with $(A,B)$ replaced by $(\grad f,\pt g)$.
It follows from 
\cite[Proposition~3.2~\&~Corollary~3.4]{Burachik}
that $A+B=\grad f+\pt g=\pt (f+g)$.
Now apply \cite[Proposition~26.1]{BC2011}.
\ref{prop:func:ii}:
Combine \cref{eq:def:FB} and \cite[Example~23.3]{BC2011}.
 \ref{prop:func:iv}:
 Combine \ref{prop:func:ii} and 
 \cref{eq:res:proj}.
\end{proof}
\begin{rem}
Let 
$f\colon X\to \RR$ be convex and differentiable
with 
$1/\beta$ Lipschitz continuous gradient,
where $\beta>0$. 
Then $\beta \grad f$ is nonexpansive, hence 
firmly nonexpansive by \cref{f:grad:fne}.
Since $\argmin(f+g)=\argmin(\beta f+\beta g)$,
 \cref{prop:func} 
can be applied, with $(f,g)$ replaced by $(\beta  f, \beta g)$,
to find a minimizer of $f+g$.
\end{rem}
Suppose that\footnote{Let $C$ be a 
nonempty closed convex
subset of $X$. We use $d_C$ to denote the \emph{distance}
from the set $C$ defined by $d_C:X\to \left[0,+\infty\right[:
x\mapsto \min_{c\in C}\norm{x-c}=\norm{x-P_C x}$. 
} 
  $C$ is a nonempty closed convex
subset of $X$.
In the sequel we make 
use of the following useful result (see, e.g., \cite[Exemple~on~page~286]{Moreau65}  
or
\cite[Corollary~12.30]{BC2011}).
\begin{equation}
\label{eq:grad:dist2}
\grad \bk{\tfrac{1}{2}d^2_C}=\Id-P_C.
\end{equation}
\begin{example}[{\bf Method of Alternating Projections (MAP) as a forward-backward iteration}]
\label{lem:FB:MAP}
Suppose that $U$ and $V$ are nonempty closed convex subsets of 
$X$, that $f=\tfrac{1}{2}d^2_U$ and that $g=\iota_V$.
Suppose that $A=\grad f=\Id-P_U$ and that $B=\pt g=N_V$.
Then $A$ is firmly nonexpansive and 
\begin{equation}
T_{\FB(\Id-P_U,N_V)}=P_VP_U.
\end{equation}
\end{example}
\begin{proof}
It follows from 
\cref{eq:grad:dist2}
that 
$\grad f=\Id-P_U$,
which 
 is firmly nonexpansive by e.g.,
\cite[Equation~1.7~on~page~241]{Zar71:1}.
Moreover 
\cref{eq:res:proj}
implies
that $J_B=J_{N_V}=P_V$. Consequently,
$
T_{\FB(A,B)}=J_B\circ(\Id-A)=P_V(\Id-(\Id-P_U))=P_VP_U.
$
\end{proof}

\section{The forward-backward operator and the normal problem}
\label{s:fb:normal}
Let $C:X\rras X$ and let $w\in X$. 
The \emph{inner shift}
and \emph{outer shift} of an operator $C$
 by $w$  at $x\in X$ are defined by
\begin{empheq}[box=\mybluebox]{equation}
C_{w}x:=C(x-w)\;\;\text{ and}\;\;  _{w}Cx:=-w+Cx,
\end{empheq}
respectively.

Let $w\in X$. 
The \emph{$w$-perturbed problem}
introduced in \cite{Sicon2014} is:
\begin{equation}
\label{P:DR}
(P_{w}) \;\; \text{find}\;\; x\in X \;\;\text{such that}\;\;
0\in \outs[w]{A}x+\inns[w]{B}x=Ax+B(x-w)-w,
\end{equation}
 and the corresponding set of zeros is
\begin{equation}
\label{eq:def:Zw}
Z_w:=\menge{x\in X}{0\in 0\in \outs[w]{A}x+\inns[w]{B}x}
=\menge{x\in X}{w\in Ax+B(x-w)}.
\end{equation}

\begin{prop}
\label{prop:eqv}
Let $w\in X$. Then 
\begin{equation}
\label{eq:TFB:shift}
{\TFB[]}_{(\outs[w]{A},\inns[w]{B})}={_{-w}T_{\FB}}=w+\TFB[],
\end{equation}
and 
\begin{equation}
\label{eq:Zw:fix}
Z_w=\fix (w+\TFB[]).
\end{equation}
Moreover, the following are equivalent:
\begin{enumerate}
\item
\label{prop:eqv:i}
$Z_w\neq \fady$.
\item
\label{prop:eqv:ii}
$  w\in\ran(A+\inns[w]{B})$.
\item
\label{prop:eqv:iv}
$w\in \ran (\Id-\TFB[]) $.
\item
\label{prop:eqv:iii}
$w\in \ran (\Id-\TDR[]) $.
\end{enumerate}
\end{prop}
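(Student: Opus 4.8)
The plan is to prove the two displayed identities first and then read off the fourfold equivalence from them, leaving the link with the Douglas--Rachford operator for last. For \cref{eq:TFB:shift}, the second equality ${_{-w}T_{\FB}}=w+\TFB[]$ is immediate from the definition of the outer shift, so the work is in the first equality. I would compute the forward-backward operator of the shifted pair directly from its definition $\TFB[]_{(\outs[w]{A},\inns[w]{B})}=J_{\inns[w]{B}}(\Id-\outs[w]{A})$. Since $\outs[w]{A}x=Ax-w$, one has $\Id-\outs[w]{A}=w+(\Id-A)$; and since $\inns[w]{B}=B(\cdot-w)$ is an inner translation, the standard translation rule for resolvents gives $J_{\inns[w]{B}}=w+J_B(\cdot-w)$. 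Composing, $J_{\inns[w]{B}}\big(w+(\Id-A)x\big)=w+J_B\big((\Id-A)x\big)=w+\TFB[]x$, which is exactly the asserted identity.

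For \cref{eq:Zw:fix}, the key observation is that the shifted pair $(\outs[w]{A},\inns[w]{B})$ still satisfies the running hypotheses: a constant shift of the value leaves firm nonexpansiveness intact, so $\outs[w]{A}$ is firmly nonexpansive, while both an outer and an inner shift preserve maximal monotonicity. Hence \cref{FB:collec}\ref{FB:collec:ii}, applied to $(\outs[w]{A},\inns[w]{B})$ in place of $(A,B)$, shows that the primal solution set of the shifted pair equals $\fix\TFB[]_{(\outs[w]{A},\inns[w]{B})}$. By the definition in \cref{eq:def:Zw} that primal solution set is $Z_w$, and by \cref{eq:TFB:shift} the fixed-point set is $\fix(w+\TFB[])$, giving $Z_w=\fix(w+\TFB[])$.

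With these identities in hand the easy equivalences follow quickly. The implication \ref{prop:eqv:i}$\Leftrightarrow$\ref{prop:eqv:ii} is merely unfolding \cref{eq:def:Zw}: a point $x$ solves $(P_w)$ exactly when $w\in Ax+B(x-w)=(A+\inns[w]{B})x$, i.e. when $w\in\ran(A+\inns[w]{B})$. The implication \ref{prop:eqv:i}$\Leftrightarrow$\ref{prop:eqv:iv} is read off \cref{eq:Zw:fix}: $Z_w=\fix(w+\TFB[])\neq\fady$ means $w=x-\TFB[]x$ for some $x$, that is, $w\in\ran(\Id-\TFB[])$.

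The one substantive point is \ref{prop:eqv:i}$\Leftrightarrow$\ref{prop:eqv:iii}, the characterization of solvability of $(P_w)$ through $\TDR[]$, and this is where I expect the real work to sit. At this stage the range identity $\ran(\Id-\TFB[])=\ran(\Id-\TDR[])$ (result \ref{R:2}) is not yet available---indeed the present proposition is precisely what is meant to furnish it---so \ref{prop:eqv:iii} cannot be routed through \ref{prop:eqv:iv}. Instead I would invoke the corresponding Douglas--Rachford characterization of the perturbed problem from \cite{Sicon2014}, or reprove it directly through the change of variable $a=J_Ax$: a short computation shows that this maps $\fix(w+\TDR[])$ into $Z_w$, and conversely, given $a\in Z_w$ with $w=u+v$, $u\in Aa$, $v\in B(a-w)$, the point $x=a+u$ satisfies $J_Ax=a$ and $J_BR_Ax=a-w$, whence $\TDR[]x=x-w$ and $x\in\fix(w+\TDR[])$. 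Thus $Z_w\neq\fady$ iff $\fix(w+\TDR[])\neq\fady$ iff $w\in\ran(\Id-\TDR[])$, closing the cycle of equivalences.
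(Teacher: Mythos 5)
Your proposal is correct and takes essentially the same route as the paper's proof: \cref{eq:TFB:shift} via the resolvent translation formulas, \cref{eq:Zw:fix} by applying \cref{FB:collec}\ref{FB:collec:ii} to the shifted pair $(\outs[w]{A},\inns[w]{B})$ (your explicit check that this pair inherits the standing hypotheses is a point the paper leaves implicit), and \ref{prop:eqv:i}$\Leftrightarrow$\ref{prop:eqv:ii}, \ref{prop:eqv:i}$\Leftrightarrow$\ref{prop:eqv:iv} by unfolding \cref{eq:def:Zw} and \cref{eq:Zw:fix}. The only divergence is at \ref{prop:eqv:i}$\Leftrightarrow$\ref{prop:eqv:iii}, where the paper simply cites \cite[Proposition~3.3]{Sicon2014} (which you also identify as the intended source); your alternative self-contained argument via the change of variable $a=J_Ax$, checking that $x=a+u$ satisfies $J_Ax=a$ and $J_BR_Ax=a-w$, hence $\TDR[]x=x-w$, is correct as well.
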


\begin{proof}
Let $x\in X$. Using \cref{eq:def:FB} and 
\cite[Proposition~23.15(ii)\&(iii)]{BC2011}
we have
$\TFB[](\outs[w]{A},\inns[w]{B})x=J_{\inns[w]{B}}(\Id-\outs[w]{A})x=
J_B((x-(Ax-w))-w)+w=J_B(x-Ax)+w=J_B(\Id-A)x+w=w+\TFB[]x,
$ which proves \cref{eq:TFB:shift}.
To prove \cref{eq:Zw:fix}
apply \cref{FB:collec}\ref{FB:collec:ii}
with $(A,B)$ replaced by $ (\outs[w]{A},\inns[w]{B})$
 and use \cref{eq:TFB:shift}.
``\ref{prop:eqv:i}$\Leftrightarrow$\ref{prop:eqv:ii}":
This follows from \cref{eq:def:Zw}.
``\ref{prop:eqv:i}$\Leftrightarrow$\ref{prop:eqv:iv}":
 Indeed, using \cref{eq:Zw:fix}
 we have $Z_w\neq \fady\siff \fix (w+\TFB[])
 \neq \fady \siff (\exists x\in X)
 $ such that $x=w+\TFB[]x\siff w\in \ran(\Id-\TFB[])$.
``\ref{prop:eqv:i}$\Leftrightarrow$\ref{prop:eqv:iii}":
This follows from  \cite[Proposition~3.3]{Sicon2014}.
 \end{proof}
 \begin{thm}
\label{cor:conc:old}
We have\footnote{For convenience we shall use 
$v_{\FB}$ and $v_{\DR}$ to denote $v_{\TFB[]}$ and
$v_{\TDR[]}$ respectively.}
\begin{enumerate}
\item
\label{cor:conc:i}
$\ran (\Id-\TDR[])=\ran (\Id-\TFB[])$.
\item
\label{cor:conc:i:i}
$\ran (\Id-\TFB[])\subseteq \ran A+\ran B$.
\item 
\label{cor:conc:iii}
$v_{\DR}=v_{\FB}$.
\end{enumerate}
\end{thm}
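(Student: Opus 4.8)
The plan is to prove the three statements in the order (i), then (ii), then (iii), with (i) being the main engine from which (iii) follows almost immediately. The central identity is \cref{cor:conc:i}, namely $\ran(\Id-\TDR[])=\ran(\Id-\TFB[])$, and the natural strategy is to show the two ranges coincide \emph{as sets} by a pointwise argument: for a fixed $w\in X$, the displacement equation $w\in\ran(\Id-\TFB[])$ should be shown equivalent to $w\in\ran(\Id-\TDR[])$. But this is exactly the content already packaged in \cref{prop:eqv}, specifically the equivalence ``\ref{prop:eqv:iv}$\Leftrightarrow$\ref{prop:eqv:iii}'', which states that $w\in\ran(\Id-\TFB[])$ iff $Z_w\neq\fady$ iff $w\in\ran(\Id-\TDR[])$. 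So the cleanest route is: fix $w\in X$; by \cref{prop:eqv} we have $w\in\ran(\Id-\TFB[])\siff Z_w\neq\fady\siff w\in\ran(\Id-\TDR[])$; since $w$ was arbitrary, the two ranges are equal. This makes \ref{cor:conc:i} essentially a corollary of \cref{prop:eqv} together with the observation that the perturbation parameter $w$ is precisely the displacement vector.

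For \cref{cor:conc:i:i}, the plan is to unwind the definition of $\TFB[]$. Take $w\in\ran(\Id-\TFB[])$, so $w=x-\TFB[]x=x-J_B(\Id-A)x$ for some $x\in X$. Write $y:=(\Id-A)x=x-Ax$ and $p:=J_B y$, so that $y-p\in By$ wait --- more carefully, $p=J_B y$ means $y-p\in Bp$. Then $w=x-p=(x-y)+(y-p)=Ax+(y-p)$, and here $Ax\in\ran A$ while $y-p\in Bp\subseteq\ran B$. Hence $w\in\ran A+\ran B$, which gives the claimed inclusion. I would present this as a short direct computation.

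Finally, \cref{cor:conc:iii} is immediate from \ref{cor:conc:i}. By \cref{fact:inf:dis:vec}, the minimal displacement vector of a nonexpansive operator $T$ is defined as $v_T=P_{\overline{\ran}(\Id-T)}0$, a quantity depending only on the closure of the range of the displacement map. Since \ref{cor:conc:i} gives $\ran(\Id-\TDR[])=\ran(\Id-\TFB[])$, taking closures yields $\overline{\ran}(\Id-\TDR[])=\overline{\ran}(\Id-\TFB[])$, and projecting $0$ onto the common closed convex set gives $v_{\DR}=v_{\FB}$. One should note in passing that $\TFB[]$ is nonexpansive (indeed averaged, by \cref{FB:collec}\ref{FB:collec:i}) and $\TDR[]$ is firmly nonexpansive, so both minimal displacement vectors are well defined via \cref{fact:inf:dis:vec}.

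The main obstacle, if any, is conceptual rather than computational: recognizing that \ref{cor:conc:i} is not an independent fact requiring a fresh fixed-point or averaging argument, but is already encoded in the equivalences of \cref{prop:eqv} once one views the displacement value $w$ as the perturbation parameter of the $w$-perturbed problem. Once that identification is made, all three parts are short. I would take care that the set-equality argument in \ref{cor:conc:i} is genuinely an equality of ranges (not merely of closures), since \ref{cor:conc:iii} only needs closures but the sharper range equality is what \cref{prop:eqv} actually delivers and is presumably used elsewhere.
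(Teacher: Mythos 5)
Your proposal is correct, and for parts \ref{cor:conc:i} and \ref{cor:conc:iii} it is exactly the paper's argument: \ref{cor:conc:i} is read off from the equivalence ``\ref{prop:eqv:iv}$\Leftrightarrow$\ref{prop:eqv:iii}'' of \cref{prop:eqv}, and \ref{cor:conc:iii} follows by taking closures and projecting $0$, via \cref{def:vt}. Where you genuinely diverge is \ref{cor:conc:i:i}: the paper proves it by combining \ref{cor:conc:i} with an external result on the range of the Douglas--Rachford displacement map (Proposition~4.1 of Eckstein's thesis), whereas you unwind the definition of $\TFB[]$ directly. Your computation is sound: with $y:=x-Ax$ and $p:=J_B y$, the resolvent relation gives $y-p\in Bp\subseteq\ran B$ (your mid-sentence self-correction lands on the right inclusion --- it is $Bp$, not $By$), and then $x-\TFB[]x=(x-y)+(y-p)=Ax+(y-p)\in\ran A+\ran B$. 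This buys self-containedness and independence: your \ref{cor:conc:i:i} needs neither part \ref{cor:conc:i} nor any citation, and it works verbatim for the forward-backward operator without passing through the Douglas--Rachford theory; the paper's route, by contrast, leverages machinery it needs anyway and keeps the proof to one line. Both are valid; yours is arguably the more elementary and transparent proof of the inclusion.
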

\begin{proof}
\ref{cor:conc:i}:
This is clear from the equivalence of
\ref{prop:eqv:iv} and \ref{prop:eqv:iii}
in \cref{prop:eqv}.
\ref{cor:conc:i:i}:
Combine \ref{cor:conc:i}
 and \cite[Proposition~4.1]{EckThesis}.
\ref{cor:conc:iii}:
Indeed, using \ref{cor:conc:i} 
 and \cref{def:vt} we have
$v_{\DR}
=P_{\overline{\ran}(\Id-T_{\DR})}0
=P_{\overline{\ran}(\Id-\TFB[])}0
=v_{\FB}$.
\end{proof}
In view of \cref{cor:conc:old}\ref{cor:conc:i},
it is tempting to ask whether we can derive
 a similar conclusion for the equality of $\ran\TFB[]$
 and $\ran \TDR[]$.
 The next example gives a negative answer to this 
 conjecture.  

\begin{example}[$\ran T_{\DR} \neq \ran T_{\FB}$]
\label{ex:rans:neq}
Suppose that $A=\Id$.
Then $T_{\DR}=\tfrac{1}{2}\Id+J_B0$ and $T_{\FB}\equiv J_B0$.
Consequently,
\begin{equation}
X=\ran T_{\DR} \neq \ran T_{\FB}=\stb{J_B0}.
\end{equation}
\end{example}
\begin{proof}
One can easily verify that $J_A=\tfrac{1}{2}\Id$, 
hence $R_A\equiv 0$. 
Therefore, 
\begin{equation}
T_{\DR} =\Id-J_A+J_BR_A
=\Id-\tfrac{1}{2}\Id+J_B0
=\tfrac{1}{2}\Id+J_B0,
\end{equation}
and 
\begin{equation}
T_{\FB} =J_B(\Id-A)=J_B(\Id-\Id)\equiv J_B0,
\end{equation}
and the conclusion readily follows.
\end{proof}
Unlike the Douglas--Rachford operator, where 
we can learn about $\ran \TDR[]$
(see \cite[Corollary~5.3]{BHM15}), we cannot obtain
accurate information about the range of $\TFB[]$
as we show next.
\begin{lem}
\label{lem:ranTFB:inc}
$\ran \TFB[]\subseteq \dom B$.
\end{lem}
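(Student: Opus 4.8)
The plan is to unwind the definition of the forward-backward operator and exploit the elementary fact that the resolvent of any maximally monotone operator has range equal to its domain. Recall from \cref{eq:def:FB} that $\TFB[]=J_B(\Id-A)$, so for any $x\in X$ we have $\TFB[]x=J_B\bk{(\Id-A)x}=J_B(x-Ax)$. Thus every point in $\ran\TFB[]$ is of the form $J_B y$ for some $y\in X$ (namely $y=x-Ax$), which immediately gives $\ran\TFB[]\subseteq\ran J_B$.

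The key step is then the standard identity $\ran J_B=\dom B$. This follows directly from the definition $J_B=(\Id+B)^{-1}$: a point $p$ lies in $\ran J_B$ if and only if there exists $y$ with $p=J_B y$, equivalently $y\in(\Id+B)p=p+Bp$, which forces $Bp\neq\fady$, i.e.\ $p\in\dom B$. Conversely any $p\in\dom B$ admits some $u\in Bp$, whence $p=J_B(p+u)\in\ran J_B$; since $J_B$ is single-valued by \cref{JA:fne:orig}\ref{JA:fne:orig:i}, this correspondence is clean. Combining with the previous paragraph yields
\begin{equation}
\ran\TFB[]\subseteq\ran J_B=\dom B,
\end{equation}
which is the claim.

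There is essentially no obstacle here: the result is a short composition of definitions, and the only ingredient beyond pure definition-chasing is the single-valuedness and maximal monotonicity of $J_B$, which is already recorded in \cref{JA:fne:orig}\ref{JA:fne:orig:i}. If one prefers to avoid even spelling out $\ran J_B=\dom B$, one can simply invoke the characterization of resolvent ranges for maximally monotone operators and cite it. The subtlety worth flagging in the surrounding exposition—and the reason \emph{inclusion} rather than \emph{equality} appears in the statement—is that $\ran\TFB[]$ need not fill out all of $\dom B$, since the inner map $x\mapsto x-Ax$ need not be surjective onto $X$; this is exactly the phenomenon illustrated by \cref{ex:rans:neq}, where $A=\Id$ collapses the domain to a single point and forces $\ran\TFB[]=\stb{J_B 0}$, a proper subset of $\dom B$.
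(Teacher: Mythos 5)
Your proof is correct and follows essentially the same route as the paper: bound $\ran\TFB[]$ by $\ran J_B$ and then use $\ran J_B=\ran(\Id+B)^{-1}=\dom(\Id+B)=\dom B$, which you simply verify by element-chasing rather than citing as a one-line identity. The extra remarks on sharpness via \cref{ex:rans:neq} match the paper's own follow-up discussion.
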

\begin{proof}
Indeed, $\ran \TFB[]
\subseteq \ran J_B=\ran(\Id+B)^{-1}=
\dom (\Id+B)=\dom B$.
\end{proof}
The result in \cref{lem:ranTFB:inc},
cannot be improved as we illustrate now.
\begin{example}[$\ran {\TFB[]} \subsetneqq \dom B$]
Suppose 
that $A=\Id$ and that $\dom B$ is not a singleton.
Then \cref{ex:rans:neq} implies that
$ \stb{J_B0}=\ran \TFB[] \subsetneqq \dom B$.
\end{example} 
\begin{example}
[$\ran {\TFB[]} = \dom B$]
Let $C$ be a nonempty closed convex subset of $X$.
Suppose that $A\equiv 0$ and that $B=N_{C}$.
Then 
\cref{eq:res:proj}
implies that
 $\TFB[]=J_B=P_{C}$, hence $\ran \TFB[]=C=\dom B$.
\end{example} 

The \emph{normal problem} (see \cite[Definition~3.7]{Sicon2014})
associated with the ordered pair 
$(A,B)$ is the $v$-perturbed problem
where $v$ is the \emph{minimal displacement
vector} 
defined by
\begin{equation}
\label{def:gap:vec}
v:=v_{\FB}:=P_{\overline{\ran}\bk{\Id-\TFB[]}}0;
\end{equation}
and the corresponding set of 
\emph{normal solutions} is $Z_v$.

\begin{cor}
\label{cor:Zv}
$Z_v=\fix(v+\TFB[])$.
\end{cor}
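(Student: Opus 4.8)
The plan is to recognize that \cref{cor:Zv} is simply the specialization of \cref{eq:Zw:fix} from \cref{prop:eqv} to the particular choice $w = v$, where $v := v_{\FB} := P_{\overline{\ran}(\Id - \TFB[])}0$ is the minimal displacement vector defined in \cref{def:gap:vec}. Since \cref{prop:eqv} establishes the identity $Z_w = \fix(w + \TFB[])$ for an \emph{arbitrary} $w \in X$, the result follows immediately by substituting $w = v$.

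First I would invoke \cref{eq:Zw:fix} from \cref{prop:eqv}, which asserts that for every $w \in X$ we have $Z_w = \fix(w + \TFB[])$. The normal problem is by definition the $v$-perturbed problem, so its set of normal solutions is $Z_v$ (the set of zeros of the $v$-perturbed inclusion, as recorded in the text preceding the corollary). Applying the identity with $w$ replaced by $v$ yields $Z_v = \fix(v + \TFB[])$, which is exactly the claimed equation.

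I do not anticipate any genuine obstacle here: the corollary is a direct instantiation of a previously proven general fact, and no additional hypotheses need to be verified, since $v$ is a well-defined element of $X$ (this is guaranteed by \cref{fact:inf:dis:vec}, which ensures $\overline{\ran}(\Id - \TFB[])$ is nonempty closed convex so that the projection $P_{\overline{\ran}(\Id-\TFB[])}0$ exists uniquely, using that $\TFB[]$ is nonexpansive by \cref{FB:collec}\ref{FB:collec:i}). The only thing worth being careful about is to confirm that the $v$ appearing in \cref{def:gap:vec} is indeed an element of $X$ and hence a legitimate choice of shift parameter $w$ in \cref{prop:eqv}; this is immediate. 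Thus the entire proof amounts to the single sentence: apply \cref{eq:Zw:fix} with $w = v$.
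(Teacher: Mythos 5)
Your proposal is correct and is exactly the paper's argument: the paper's proof of this corollary is the single line ``This follows from \cref{prop:eqv}'', i.e.\ the specialization of \cref{eq:Zw:fix} to $w=v$. Your additional remark that $v$ is a well-defined element of $X$ (via \cref{fact:inf:dis:vec}) is a harmless extra verification, not a departure from the paper's route.
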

\begin{proof}
This follows from \cref{prop:eqv}.
\end{proof}
We point out that, even though 
the normal problem is well-defined in
view of 
\cref{fact:inf:dis:vec},
the set of normal solution may or may not be empty,
as we illustrate now.
\begin{example}
[{\bf $Z=\fady$ but normal solutions exist}]
Let $a^*, b^*\in X$ such that $a^*+ b^*\neq 0$.
Suppose that $A\colon X\to X\colon x\mapsto a^*$,
 and that $ B\colon X\to X\colon x\mapsto b^*$.
 Then $Z=\fady$, $\ran(\Id-\TFB[])=\stb{a^*+b^*}$,
 therefore $v=a^*+b^*\in \ran(\Id-\TFB[])$
  and $Z_v=X\neq \fady$.
\end{example}
\begin{proof}
We have $J_B=(\Id+b^*)^{-1}=\Id-b^*$,
and $\TFB[]=J_B(\Id-A)=\Id-(a^*+b^*)$.
Consequently, $\overline{\ran}(\Id-\TFB[])
=\ran(\Id-\TFB[])=\stb{a^*+b^*}$ and 
$v=a^*+b^*\in \ran(\Id-\TFB[])$.
Therefore, $(\forall x\in X)$
$x-\TFB[]x =a^*+b^*=v$, which 
in view of \cref{cor:Zv},
implies that $Z_v=X$, as claimed.
\end{proof}
\begin{example}
[{\bf $Z=\fady$ and normal solutions do not exist}]
Suppose that $X=\RR^2$,
that $U=\menge{(x,y)\in \RR^2}{x>0,y\ge 1/x}$, 
that $V=\RR\times\stb{0}$, 
that $\beta <0$,
that $w=(\beta,0)\neq (0,0)$ 
and that $f = \tfrac{1}{2}d^2_U$.
Set $A=\grad f$ and set $B=w+N_V$. 
Then $\TFB[]=-w+P_VP_U$,
$v=w$, $v\not\in \ran(\Id-\TFB[])$
and therefore $Z_v=\fady$.
\end{example}
\begin{proof}
In view of 
\cref{eq:grad:dist2}
we have 
$A=\Id-P_U$. Moreover \cref{eq:res:proj}
and
\cite[Proposition~23.15(ii)]{BC2011}
implies that $J_B=P_V(\cdot-w)=P_V-w$,
where the last identity uses 
that $P_V$ is linear and that $w\in V$.
Consequently $\TFB[]=J_B(\Id-A)=P_V(\Id-(\Id-P_U))-w
=P_VP_U-w$.
We claim that 
\begin{equation}
\label{e:ran:ex}
{\ran}(\Id-\TFB[])=w+\ran (\Id-P_VP_U).
\end{equation}
Indeed, let $y\in X$.
Then $y\in {\ran}(\Id-\TFB[])$  
$\siff (\exists x\in X)$ such that
$y=w+x-P_VP_Ux
\siff y\in w+\ran(\Id-P_VP_U)$.
It follows from \cref{ex:dis:Map} below
that $\overline{\ran}(\Id-P_VP_U)
=\overline{(\rec U)^\ominus+(\rec V)^\ominus}
=\overline{\RR_{-}^2+V^\perp}
=\RR_{-}^2+(\stb{0}\times \RR)
=\RR_{-}\times \RR
$.
Using \cref{eq:trans:form} applied with $S$
replaced by
$\ran(\Id-P_VP_U)$ 
we have 
$v=w+P_{\overline{\ran}(\Id-P_VP_U)}(-w)=w$.
Consequently \cref{e:ran:ex} becomes 
${\ran}(\Id-\TFB[])=v+\ran (\Id-P_VP_U)$.
Furthermore, 
using \cite[Lemma~2.2(i)]{BB94}
$v\in \ran(\Id-\TFB[])
=v+\ran(\Id-P_VP_U)
\siff 0\in \ran(\Id-P_VP_U)
\siff \fix P_VP_U\neq \fady
\siff U\cap V\neq \fady$,
which does not hold, hence
$Z_v=\fady$ by \cref{prop:eqv}.
\end{proof}

\begin{rem}
Suppose that $A^{-1}$ is firmly nonexpansive.
Then one can define the forward-backward operator 
for the dual pair $(A^{-1},B^{-\ovee})$.
Nonetheless, the self-duality property, which is a key 
feature of $\TDR[]$ (see, e.g., \cite[Corollary~4.3]{JAT2012}
or \cite[Lemma~3.6~on~page~133]{EckThesis}), 
does not hold for $\TFB[]$
as we illustrate in \cref{ex:TFB:not:sd}. 
 \end{rem}
\begin{example}[{\bf $T_{\FB}$ is not self-dual}]
\label{ex:TFB:not:sd}
Suppose that $V$ is a closed linear subspace
of $X$
and let $u\in V\smallsetminus\stb{0}$.
Suppose that $A:X\to X:x\mapsto x-u$
 and that $B=N_V$.
 Then $A^{-1}$ is firmly nonexpansive, 
 however
 \begin{equation}
 u\equiv T_{\FB(A,B)}\neq T_{\FB(A^{-1},B^{-\ovee})} \equiv 0.
 \end{equation}
\end{example}
\begin{proof}
First  note that
$A^{-1}:X\to X:x\mapsto x+u$, hence  
$A^{-1}$ is firmly nonexpansive, as claimed.
Since $B$ is linear
we learn that $B^{-1}$ is linear and
so are $J_{B}$ and $J_{B^{-1}}$
by \cite[Theorem~2.1(xviii)]{BMW2012}.
By
\cite[Proposition~4.1(ii)]{JAT2012}
 and 
\cref{JA:fne:orig}\ref{JA:fne:orig:ii}
we have 
$J_{B^{-\ovee}}=J_{({B^{-1}})^{\ovee}}=
J_{B^{-1}}=\Id-J_B=\Id-P_V=P_{V^{\perp}}
$.
Now,
$T_{\FB(A,B)}=J_B(\Id-A)
=P_V(\Id-\Id+u)=P_V u=u$,
whereas 
$T_{\FB(A^{-1},B^{-\ovee})}
= J_{B^{-\ovee}}(\Id-A^{-1})
=P_{V^\perp}(\Id-\Id-u)
=P_{V^\perp}(-u)
=-P_{V^\perp}(u)\equiv 0$.
\end{proof}

\begin{rem}
Clearly the forward-backward operator is \emph{not} symmetric in $A$
and $B$, however, it is critical to consider the order 
 in \cref{eq:def:FB} when only $A$ is firmly nonexpansive.
 If, in addition, $B$ is firmly nonexpansive 
 we can also define $T_{\FB(B,A)}$.
\end{rem}

\begin{cor}
Suppose that $B:X\to X$ is firmly nonexpansive. Then 
$T_{\FB(B,A)}:=J_A(\Id-B)$ is averaged and 
\begin{equation}
\norm{v_{\FB(A,B)}}=\norm{v_{\FB(B,A)}}.
\end{equation}
\end{cor}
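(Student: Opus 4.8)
The plan is to prove the two assertions separately, reducing the norm identity to a symmetry of the Douglas--Rachford operator via \cref{cor:conc:old}.

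For the averaging statement I would argue exactly as in \cref{FB:collec}\ref{FB:collec:i}, with the roles of the two operators interchanged. Since $B$ is firmly nonexpansive, so is $\Id-B$; and since $A$ is maximally monotone by \cref{lem:coc:prop}\ref{lem:coc:mm}, its resolvent $J_A$ is firmly nonexpansive by \cref{JA:fne:orig}\ref{JA:fne:orig:i}. Thus $\Id-B$ and $J_A$ are each $\tfrac12$-averaged, and their composition $T_{\FB(B,A)}=J_A(\Id-B)$ is $\tfrac23$-averaged, in particular averaged.

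For the norm identity, the first step is to pass from the forward--backward operators to the Douglas--Rachford operators. Applying \cref{cor:conc:old}\ref{cor:conc:iii} to the pair $(A,B)$ gives $v_{\FB(A,B)}=v_{\DR(A,B)}$, and applying the same result to the pair $(B,A)$ (legitimate, since $B$ is firmly nonexpansive and $A$ is maximally monotone) gives $v_{\FB(B,A)}=v_{\DR(B,A)}$. Hence it suffices to prove $\norm{v_{\DR(A,B)}}=\norm{v_{\DR(B,A)}}$. Next I would rewrite both operators in Peaceman--Rachford form, $T_{\DR(A,B)}=\tfrac12(\Id+R_BR_A)$ and $T_{\DR(B,A)}=\tfrac12(\Id+R_AR_B)$, so that $\Id-T_{\DR(A,B)}=\tfrac12(\Id-R_BR_A)$ and symmetrically for $(B,A)$. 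Because projecting $0$ onto a positively scaled closed convex set scales the projection by the same factor, and because $\overline{\ran}(\Id-R_BR_A)$ is closed convex by \cref{fact:inf:dis:vec}, this yields $\norm{v_{\DR(A,B)}}=\tfrac12\dist\big(0,\overline{\ran}(\Id-R_BR_A)\big)=\tfrac12\inf_{x\in X}\norm{x-R_BR_Ax}$, and likewise for $(B,A)$. So everything reduces to the equality of the \emph{minimal displacements} of the two products $R_BR_A$ and $R_AR_B$.

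The main obstacle is exactly this last equality. The conceptual reason it should hold is that $R_BR_A$ and $R_AR_B$ are cyclic conjugates, encoded in the factorization $(R_BR_A)^n=R_B(R_AR_B)^{n-1}R_A$. One cannot simply transport norms through $R_A$, however, since $R_A$ is nonexpansive but neither an isometry nor $0$-preserving; the clean inequality $\norm{R_Aw}\le\norm{w}$ is false. I would circumvent this using the asymptotic characterization of the minimal displacement, namely $\inf_{x}\norm{x-Nx}=\lim_{n}\tfrac1n\norm{N^nx-x}=\norm{v_N}$ for nonexpansive $N$, which is precisely the content of the references cited in \cref{fact:inf:dis:vec}. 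Writing $N=R_BR_A$ and $M=R_AR_B$, the factorization and two applications of the triangle inequality together with nonexpansiveness of $R_B$ give $\norm{N^nx-x}\le\norm{M^{n-1}(R_Ax)-R_Ax}+\norm{Nx-x}$; dividing by $n$ and letting $n\to\infty$, the constant term $\norm{Nx-x}$ is annihilated and the first term tends to $\norm{v_M}$, so $\norm{v_N}\le\norm{v_M}$. The symmetric argument, using $M^n=R_A(R_BR_A)^{n-1}R_B$ and nonexpansiveness of $R_A$, gives $\norm{v_M}\le\norm{v_N}$. Hence $\norm{v_N}=\norm{v_M}$, and rescaling by $\tfrac12$ yields $\norm{v_{\DR(A,B)}}=\norm{v_{\DR(B,A)}}$, so that $\norm{v_{\FB(A,B)}}=\norm{v_{\FB(B,A)}}$, as required.
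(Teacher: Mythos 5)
Your proposal is correct, and it differs from the paper's proof in an instructive way. The paper's argument is essentially a one-line chain: it applies \cref{cor:conc:old}\ref{cor:conc:iii} to both ordered pairs, exactly as you do, but then simply cites \cite[Proposition~3.11]{Sicon2014} for the remaining equality $\norm{v_{\DR(A,B)}}=\norm{v_{\DR(B,A)}}$ (the averagedness claim is left implicit, as the obvious role-swap of \cref{FB:collec}\ref{FB:collec:i}, which is what you spell out). You instead prove the Douglas--Rachford symmetry from scratch: you pass to the Peaceman--Rachford form $T_{\DR(A,B)}=\tfrac12(\Id+R_BR_A)$, use the scaling identity $P_{\lambda C}\,0=\lambda P_C\,0$ to reduce to equality of the minimal displacements of $R_BR_A$ and $R_AR_B$, and then combine the cyclic factorization $(R_BR_A)^n=R_B(R_AR_B)^{n-1}R_A$ with Pazy's asymptotic formula $\tfrac1n\norm{N^nx-x}\to\norm{v_N}$, which is indeed available from the references underlying \cref{fact:inf:dis:vec}; the triangle-inequality bookkeeping and the limit $n\to\infty$ are handled correctly, and your observation that one cannot naively push norms through $R_A$ (it is nonexpansive but not norm-decreasing at $0$) is exactly the right diagnosis of why the asymptotic route is needed. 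What the paper's route buys is brevity; what yours buys is self-containedness and generality, since your argument in fact shows that for \emph{arbitrary} nonexpansive $S$ and $T$ the products $ST$ and $TS$ have minimal displacement vectors of equal norm, which is the structural reason behind the cited proposition. Your parenthetical justification for applying \cref{cor:conc:old}\ref{cor:conc:iii} to the pair $(B,A)$ --- namely that $A$ is maximally monotone, by \cref{lem:coc:prop}\ref{lem:coc:mm} --- is also precisely what is needed there.
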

\begin{proof}
Combining 
\cref{cor:conc:old}\ref{cor:conc:iii} 
and \cite[Proposition~3.11]{Sicon2014}
 we have
 $\norm{v_{\FB(A,B)}}
=\norm{v_{\DR(A,B)} }=\norm{v_{\DR(B,A)}}
=\norm{v_{\FB(B,A)}}$.
 \end{proof}

\section{The range of the displacement operator}
\label{s:fb:ran}
Unless otherwise stated, in this section 
we work under the assumption that
\begin{empheq}[box=\mybluebox]{equation*}
\label{T:assmp}
H \text{~~is a finite-dimensional Hilbert space}.
\end{empheq} 
The results in this section provide information on
the range of the displacement map $\Id-\TFB[]$.

\begin{defn}[{\bf nearly convex and nearly equal sets}]
Let $C$ and $D$ be subsets\footnote{Let $C$
be a subset of $H$. We use $\ri C$ to denote the interior 
of $C$ with respect to the affine hull of $C$.
} of $H$.
\begin{enumerate}
\item
We say that $D $ is \emph{nearly convex}\footnote{For 
detailed discussion on the algebra of nearly
convex sets we refer the reader to \cite[Section~3]{Rock70}.}
(see \cite[Theorem~12.41]{Rock98})
if there exists a convex set subset $E$ of
$H$ such that $E\subseteq D\subseteq \overline{E}$.
\item
We say that $C$ and $D$ are \emph{nearly equal}\footnote{For 
detailed discussion on the properties of nearly equal and nearly
convex sets we refer the reader to \cite{BMW2012}.}
if 
\begin{equation}
C\simeq D:\siff \overline{C}=\overline{D}\;\;\text{and}
\;\; \ri C=\ri D.
\end{equation}
\end{enumerate} 
\end{defn}

\begin{fact}
\label{fact:dom:ran:nc}
Let $H$ be a finite-dimensional Hilbert space.
Let $C:H\rras H$ be maximally monotone.
Then
$\dom C$ and $\ran C$ 
are nearly convex.
\end{fact}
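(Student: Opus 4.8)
The statement to prove is Fact~\ref{fact:dom:ran:nc}: for a maximally monotone operator $C$ on a finite-dimensional Hilbert space $H$, both $\dom C$ and $\ran C$ are nearly convex.

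The plan is to reduce the range statement to the domain statement and then handle the domain directly. First I would observe that $\ran C = \dom C^{-1}$, and that the inverse $C^{-1}$ of a maximally monotone operator is again maximally monotone (the graph of $C^{-1}$ is obtained from that of $C$ by swapping coordinates, which preserves monotonicity and maximality). Hence it suffices to prove near convexity of $\dom C$ alone; the range assertion follows by applying the domain result to $C^{-1}$.

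For the domain, the core task is to exhibit a convex set $E$ with $E \subseteq \dom C \subseteq \overline{E}$. The natural candidate is $E := \inte(\conv \dom C)$, or more robustly the relative interior $E := \ri(\conv \dom C)$, which is convex. The two inclusions I must establish are: (i) $\ri(\conv\,\dom C) \subseteq \dom C$, and (ii) $\dom C \subseteq \overline{\ri(\conv\,\dom C)} = \overline{\conv\,\dom C}$. Inclusion (ii) is immediate since $\dom C \subseteq \conv\,\dom C \subseteq \overline{\conv\,\dom C}$ and the closure of a convex set equals the closure of its relative interior. The substantive content is (i), and this is exactly the classical Rockafellar result on the near convexity of domains of maximally monotone operators in finite dimensions: the relative interior of the convex hull of the domain is contained in the domain itself. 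I would cite this directly, for instance \cite[Theorem~12.41]{Rock98} (already referenced in the definition of nearly convex sets) or the corresponding statement in \cite[Rock70]{Rock70}, since the definition of nearly convex was explicitly tied to that theorem.

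The main obstacle is precisely inclusion (i), which is not elementary: it rests on the fact that in finite dimensions a point in the relative interior of the convex hull of the domain cannot be a boundary point where $C$ fails to be defined, because maximality forces the graph to "fill in" such interior points. This is genuinely a theorem of monotone operator theory rather than a convexity triviality, so I would not reprove it but invoke the cited result. A clean writeup would therefore read: by \cite[Theorem~12.41]{Rock98}, $\dom C$ is nearly convex; applying the same to the maximally monotone operator $C^{-1}$ and using $\ran C = \dom C^{-1}$ gives near convexity of $\ran C$.

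\begin{proof}
Since $C$ is maximally monotone, its inverse $C^{-1}$ is also maximally monotone, and $\ran C = \dom C^{-1}$. Thus it suffices to prove that the domain of a maximally monotone operator is nearly convex, and then apply this to $C^{-1}$. The near convexity of $\dom C$ is \cite[Theorem~12.41]{Rock98}: one takes $E:=\ri(\conv\,\dom C)$, which is convex and satisfies $E\subseteq\dom C\subseteq\overline{E}$.
\end{proof}
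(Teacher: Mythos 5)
Your proof is correct and takes essentially the same route as the paper, which simply cites \cite[Theorem~12.41]{Rock98} for this fact. Your extra reduction of the range case to the domain case via $\ran C=\dom C^{-1}$ and maximal monotonicity of $C^{-1}$ is sound but redundant, since the cited theorem already asserts near convexity of both $\dom C$ and $\ran C$.
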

\begin{proof}
See
 \cite[Theorem~12.41]{Rock98}.
\end{proof}

\begin{thm}
\label{cor:conc}
Let $H$ be a finite-dimensional Hilbert space.
The following hold:   
\begin{enumerate}
\item
\label{cor:conc:ii}
$\ran (\Id-\TFB[])\simeq\ran A+\ran B $.
\item
\label{cor:conc:ii:i}
Suppose that $A$ and $B$ are affine\footnote{Let $B:X\rras X$.
Then $B $ is an affine relation if $\gra B$ is an affine subspace
of $X\times X$.}.
Then 
$\ran (\Id-\TFB[])
=\overline{\ran} (\Id-\TFB[])
=\ran A+\ran B $.
\end{enumerate}
If, in addition,  $A$ or $B$ is surjective then we additionally
have:
\begin{enumerate}
 \setcounter{enumi}{2}
\item
\label{cor:fd:surj:i}
$\ran (\Id-\TFB[])=X$. 
\item
\label{cor:fd:surj:ii}
$\fix \TFB[]=Z\neq \fady$.
\end{enumerate}
\end{thm}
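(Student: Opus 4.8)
The plan is to prove Theorem~\ref{cor:conc} by first establishing the near-equality in \ref{cor:conc:ii}, then refining it to an equality under the affine hypothesis in \ref{cor:conc:ii:i}, and finally specializing to the surjective case for \ref{cor:fd:surj:i} and \ref{cor:fd:surj:ii}.

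For \ref{cor:conc:ii}, the key observation is that \cref{cor:conc:old}\ref{cor:conc:i} identifies $\ran(\Id-\TFB[])$ with $\ran(\Id-\TDR[])$. Thus it suffices to understand the range of the Douglas--Rachford displacement map. The natural route is to invoke the known near-equality $\ran(\Id-\TDR[])\simeq \ran A+\ran B$ for the Douglas--Rachford operator (this type of result is exactly what is referenced via \cite[Corollary~5.3]{BHM15} in the discussion preceding \cref{lem:ranTFB:inc}). Combining this with \cref{cor:conc:old}\ref{cor:conc:i} gives $\ran(\Id-\TFB[])=\ran(\Id-\TDR[])\simeq\ran A+\ran B$. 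The appeal to near-equality rather than equality is essential because $\ran A$ and $\ran B$ need only be nearly convex (by \cref{fact:dom:ran:nc}), so their sum and its closure can differ; the notion $\simeq$ is precisely what survives taking closures and relative interiors.

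For \ref{cor:conc:ii:i}, the idea is that affineness removes the slack in the near-equality. When $A$ and $B$ are affine relations, $\ran A$ and $\ran B$ are affine subspaces, hence closed and convex, so $\ran A+\ran B$ is itself a (closed) affine subspace and in particular equals its own closure and relative interior. On the other side, $\Id-\TFB[]$ is then an affine single-valued operator, so its range is an affine subspace, again closed. Two nearly equal sets that are both closed affine subspaces must coincide, so the near-equality of \ref{cor:conc:ii} upgrades to genuine equality, and simultaneously $\ran(\Id-\TFB[])=\overline{\ran}(\Id-\TFB[])$ since the range is already closed.

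Finally, \ref{cor:fd:surj:i} and \ref{cor:fd:surj:ii} follow quickly. If $A$ or $B$ is surjective, then $\ran A+\ran B=X$, so by \ref{cor:conc:ii} we get $\ran(\Id-\TFB[])\simeq X$, forcing $\overline{\ran}(\Id-\TFB[])=X$; since $X$ is finite-dimensional the relative interior condition then gives $\ran(\Id-\TFB[])=X$ outright, which is \ref{cor:fd:surj:i}. For \ref{cor:fd:surj:ii}, surjectivity gives $0\in\ran(\Id-\TFB[])$, so by the equivalence of \ref{prop:eqv:iv} with $Z_w\neq\fady$ in \cref{prop:eqv} (applied with $w=0$, noting $Z_0=Z$), we obtain $Z\neq\fady$; combining with \cref{FB:collec}\ref{FB:collec:ii} yields $\fix\TFB[]=Z\neq\fady$. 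The main obstacle I anticipate is the upgrade in \ref{cor:conc:ii:i}: one must argue carefully that affineness of the relations $A,B$ forces $\ran(\Id-\TFB[])$ to be an \emph{honest} affine subspace (closed), and that $\simeq$ between two closed affine subspaces is equality, rather than merely equality of closures.
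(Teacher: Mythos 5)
Your overall strategy is the paper's own: reduce part (i) to the Douglas--Rachford displacement range via \cref{cor:conc:old}\ref{cor:conc:i}, upgrade $\simeq$ to $=$ in (ii) by observing that both sides are closed affine subspaces in finite dimensions, and settle (iii)--(iv) with the relative-interior argument and $Z=\fix\TFB[]$. Parts (ii)--(iv) of your argument are correct and essentially identical to the paper's (in (ii) you should, as the paper does, cite that the resolvent of an affine relation is affine, e.g.\ \cite[Theorem~2.1(xix)]{BMW2012}, since that is what makes $\Id-\TFB[]$ affine).

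There is, however, a genuine gap in your step for (i): the statement you invoke, $\ran(\Id-\TDR[])\simeq\ran A+\ran B$, is not the theorem that exists in \cite{BHM15}, and it is false for general maximally monotone pairs. What \cite[Theorem~5.2]{BHM15} actually gives --- and what the paper uses; the \cite[Corollary~5.3]{BHM15} you point to describes $\ran\TDR[]$, not the displacement range --- is $\ran(\Id-\TDR[])\simeq(\dom A-\dom B)\cap(\ran A+\ran B)$, and this under a rectangularity ($3^*$ monotonicity) hypothesis. The domain factor is not cosmetic: if $A=N_U$ and $B=N_V$ for disjoint compact convex sets $U,V$, then $\ran A+\ran B=X$ while $\ran(\Id-\TDR[])\subseteq U-V$ is bounded, so your simplified statement fails there. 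To close the gap you must insert exactly the two observations the paper makes before invoking the theorem: $A$ is $3^*$ monotone (this is \cref{lem:coc:prop}\ref{lem:coc:3*}, a consequence of the standing firm nonexpansiveness of $A$), and $\dom A=X$ (again from firm nonexpansiveness), whence $\dom A-\dom B=X$ and the right-hand side collapses to $\ran A+\ran B$. With those two lines added, your proof of (i), and hence the whole argument, is complete and coincides with the paper's.
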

\begin{proof}
\ref{cor:conc:ii}: 
Note that $A$ is $3^*$ monotone (by
\cref{lem:coc:prop}\ref{lem:coc:3*})
 and $\dom A=X$.
It follows from
\cite[Theorem~5.2]{BHM15} that
$\ran (\Id-\TDR[])\simeq(\dom A-\dom B)\cap(\ran A+\ran B)  $.
Now combine with \cref{cor:conc:old}\ref{cor:conc:i} 
and use that $\dom A=X$. 
\ref{cor:conc:ii:i}:
On the one hand,
 $\ran A$ and
$\ran B$ are closed affine subspaces of $X$,
so is their sum $\ran A+\ran B$.
On the other hand,  
since the resolvent $J_B$ is affine
(see \cite[Theorem~2.1(xix)]{BMW2012}), so are $\TFB[]$
and $\Id-\TFB[]$.
Therefore, in view of 
\ref{cor:conc:ii}, $\ran (\Id-\TFB[])=\overline{\ran} (\Id-\TFB[])
=\overline{\ran A+\ran B }=\ran A+\ran B $.
\ref{cor:fd:surj:i}:
Using \cref{cor:conc}\ref{cor:conc:ii}
we have 
$X=\ri X=\ri (\ran A+\ran B)\subseteq \ran (\Id-\TFB[])
\subseteq \overline{\ran} (\Id-\TFB[])=\overline{ \ran A+\ran B}=X$.
\ref{cor:fd:surj:ii}:
Note that in view of  \cref{FB:collec}\ref{FB:collec:ii}
$0\in \ran (\Id-\TFB[])\siff \fix \TFB[]\neq \fady\siff Z\neq \fady$.
Now combine with \ref{cor:fd:surj:i}.
\end{proof}

In the conclusion of \cref{cor:conc}\ref{cor:conc:ii},
we cannot replace 
near equality by equality
as we illustrate in \cref{ex:neq:not:eq}.
\begin{example}
\label{ex:neq:not:eq}
Suppose that $H=\RR^2$
 and let $f\colon\RR^2\to \left]-\infty,+\infty\right]:
 (\xi_1,\xi_2)\mapsto\max\stb{g(\xi_1),\abs{\xi_2}}$,
 where $g(\xi_1)=1-\sqrt{\xi_1}$ if $\xi\ge 0$ and 
 $g(\xi_1)=+\infty$ otherwise. 
Set\footnote{Let
$f:X\to \left]-\infty,+\infty\right]$ 
be convex, lower semicontinuous, and proper.
We use $f^*$ to denote the \emph{convex conjugate
(a.k.a. Fenchel conjugate)} of $f$, defined by 
$f^*:X\to \left]-\infty,+\infty\right]:x\mapsto 
\sup_{u\in X}(\innp{x,u}-f(x))$.
} 
$A=P_{\RR^2_+}$
and  $B=\pt f^*$.
Then $A$ is firmly nonexpansive 
and B is maximally monotone.
Moreover, $\ran A=\RR^2_+$,
$\ran B= \menge{(\xi_1,\xi_2)}{\xi_1>0, \xi_2\in \RR}
\cup \menge{(0,\xi_2)}{\abs{\xi_2}\ge 1}$,
hence $\ran A+\ran B=\menge{(\xi_1,\xi_2)}{\xi_1\ge 0, \xi_2\in \RR}$
but $\ran (\Id-\TFB[])=
\menge{(\xi_1,\xi_2)}{\xi_1>0, \xi_2\in \RR}
\cup \menge{(0,\xi_2)}{\xi_2\le -1}.$
Therefore
\begin{equation}
\ri(\ran A+\ran B) \subsetneqq \ran (\Id-T)
\subsetneqq \overline{\ran A+\ran B}=\ran A+\ran B.
\end{equation}
\end{example}
\begin{proof}
The claim about firm nonexpansiveness 
of $A$ follows from
e.g., \cite[Equation~1.6~on~page~241]{Zar71:1} 
or \cite[Section~3]{GReich84}
 and maximal monotonicity of
$B$ follows from
\cref{fact:func:sd}\ref{fact:func:sd:mm} applied to $f^*$.
Using \cref{fact:func:sd}\ref{eq:sb:inv:conj}
and \cite[Example~on~page~218]{Rock70} 
we see that 
$\dom \pt f=\ran (\pt f)^{-1}= \ran \pt f^*
=\ran B= \menge{(\xi_1,\xi_2)}{\xi_1>0, \xi_2\in \RR}
\cup \menge{(0,\xi_2)}{\abs{\xi_2}\ge 1}$.
Note that in view of \cref{cor:conc}\ref{cor:conc:ii}
we have $
\menge{(\xi_1,\xi_2)}{\xi_1>0, \xi_2\in \RR}=\ri (\ran A+\ran B)
\subseteq \ran (\Id-T)\subseteq 
 \overline{\ran A+\ran B}
 =\menge{(\xi_1,\xi_2)}{\xi_1\ge0, \xi_2\in \RR}$.
 Therefore we only need to check 
 the points in $\menge{(0,\beta)}{\beta\in \RR}$.
To proceed further we recall that
 (see \cite[Example~6.5]{MPA2016}) 
\begin{equation}
\label{eq:sub:cases}
\pt f(\xi_1,\xi_2)=
\begin{cases}
\fady, &\text{if $\xi_{1}<0$;}\\
\fady,   & \text{if } \xi_1 =0 \text { and }\abs{\xi_2}<1;\\
\RR_{-}\times\stb{1},    &  \text{if }  \xi_1=0 \text { and } \xi_2\ge 1; \\
\RR_{-}\times\stb{-1},   &  \text{if }  \xi_1=0  \text{ and }\xi_2\le -1; \\
\conv\stb{(-\tfrac{1}{2}{\xi_1}^{-{1}/{2}},0),(0,1)},
&  \text{if } {\xi_2}= 1-\sqrt{\xi_1} \text { and } 0<\xi_1<1;\\
\conv\stb{(-\tfrac{1}{2}{\xi_1}^{-{1}/{2}},0),(0,-1)},
&  \text{if } {-\xi_2}= 1-\sqrt{\xi_1} \text { and } 0<\xi_1<1;\\
(-\tfrac{1}{2}{\xi_1}^{-\tfrac{1}{2}},0), &  \text{if } 0<\xi_1<1
 \text { and }1-\sqrt{\xi_1}>\abs{\xi_2};\\
(0,1), &  \text{if $0<\xi_{1}<1$ } \text{ and }  \xi_2> 1-\sqrt{\xi_1};\\
(0,-1), &  \text{if $0<\xi_{1}<1$ } \text{ and } -\xi_2 > 1-\sqrt{\xi_1};\\
\conv\stb{(-\tfrac{1}{2},0),(0,1),(0,-1)} ,
&  \text{if }  \xi_1=1 \text { and } \xi_2=0; \\
\conv\stb{(0,1),(0,-1)},
&   \text {if }\xi_1>1 \text{ and } \xi_2=0;\\
(0,1), &  \text{if $\xi_{1}>1$ and }   \xi_2>0;\\
(0,-1), &  \text{if $\xi_{1}>1$ and }  -\xi_2 > 0.
\end{cases}
\end{equation}
 Let $\beta\in \RR$.
 In view of \cref{prop:eqv}  and \cref{fact:func:sd}\ref{eq:sb:inv:conj} 
 we have 
 \begin{subequations}
 \label{ex:strict:ne}
 \begin{align}
 (0,\beta)\in \ran(\Id-\TFB[])
 &\siff (\exists (\xi_1,\xi_2)\in \RR^2) 
~
 (0,\beta)\in P_{\RR^2_+}  (\xi_1,\xi_2)
 +
 \pt f^* (\xi_1,\xi_2-\beta)\\
 &\quad
 =P_{\RR^2_+}  (\xi_1,\xi_2)+
 (\pt f)^{-1} (\xi_1,\xi_2-\beta)\\
 &\siff  (\exists (\xi_1,\xi_2)\in \RR^2) 
~
 (0,\beta)-P_{\RR_+^2}  (\xi_1,\xi_2)
 \in  (\pt f)^{-1} (\xi_1,\xi_2-\beta)\\
 &\siff (\exists (\xi_1,\xi_2)\in \RR^2)
~(\xi_1,\xi_2-\beta)\in \pt 
f \bk{ (0,\beta)-P_{\RR^2_+}  (\xi_1,\xi_2)}.
\end{align}
\end{subequations}

We argue by cases using \cref{eq:sub:cases}
 and \cref{ex:strict:ne}. 

\emph{Case~1:} 
$\xi_1\ge 0$ and  $\xi_2\ge 0$.
Then $(0,\beta)\in \ran(\Id-\TFB[])$
$\siff  (\exists (\xi_1,\xi_2)\in \RR^2)~
(\xi_1,\xi_2-\beta)\in \pt 
f ( (0,\beta)-P_{\RR_+^2}  (\xi_1,\xi_2))
=\pt f (-\xi_1,\beta-\xi_2))$
$\siff $ [$(\exists (\xi_1,\xi_2)\in \RR^2)~\xi_1=0, 
\xi_2-\beta =1 \text{~ and~} \beta -\xi_2\ge 1$
or $\xi_1=0, \xi_2-\beta =-1 \text{~ and~} \beta- \xi_2\le -1$], 
which is impossible.

\emph{Case~2:} 
$\xi_1\le 0$ and  $\xi_2\le 0$.
Then $(0,\beta)\in \ran(\Id-\TFB[])$
$\siff(\exists (\xi_1,\xi_2)\in \RR^2)~
 (\xi_1,\xi_2-\beta)\in \pt 
f ( (0,\beta)-P_{\RR_+^2}  (\xi_1,\xi_2))
=\pt f (0,\beta))$
$\siff$ 
[$(\exists (\xi_1,\xi_2)\in \RR^2)~
\xi_1\le 0, \xi_2-\beta =1 \text{~ and~} \beta \ge 1$
or $\xi_1\le 0, \xi_2-\beta =-1 \text{~ and~} \beta\le -1$ ]
$\siff$ 
[ $(\exists (\xi_1,\xi_2)\in \RR^2)~  \xi_1\le 0, \xi_2=\beta +1\ge 2 $
or $\xi_1\le 0, \xi_2=\beta -1\le -2$].
Since $\xi_2\le 0$ we conclude that
$\beta\le -1$.

\emph{Case~3:}
$\xi_1> 0$ and  $\xi_2 < 0$.
Then $(0,\beta)\in \ran(\Id-\TFB[])$
$\siff (\exists (\xi_1,\xi_2)\in \RR^2)~
(\xi_1,\xi_2-\beta)\in \pt 
f ( (0,\beta)-P_{\RR_+^2}  (\xi_1,\xi_2))
=\pt f (-\xi_1,\beta)$
$\RA$ [$ \xi_1>0$
and by \cref{eq:sub:cases} $-\xi_1>0$] 
which is impossible.

\emph{Case~4:} $\xi_1< 0$ and  $\xi_2>0$.
Then $(0,\beta)\in\ran(\Id-\TFB[])$
$\siff (\exists (\xi_1,\xi_2)\in \RR^2)~
(\xi_1,\xi_2-\beta)\in \pt 
f ( (0,\beta)-P_{\RR_+^2}  (\xi_1,\xi_2))
=\pt f (0,\beta-\xi_2)$
$\siff $[$\xi_1<0, \xi_2-\beta=1 \text{~and~}\beta-\xi_2\ge 1
\text{~or~} \xi_1<0, \xi_2-\beta=-1 \text{~and~}\beta-\xi_2\le-1$], 
which never occurs.

 Altogether we conclude that
$\ran (\Id-\TFB[])=
\menge{(\xi_1,\xi_2)}{\xi_1>0, \xi_2\in \RR}
\cup \menge{(0,\xi_2)}{\xi_2\le -1}$,
as claimed.
\end{proof}
Suppose that $C$ and $D$
are nonempty nearly convex subsets
of $H$. Then \cite[Proposition~2.12]{BMW2012}  
implies that
\begin{equation}
\label{BMW:cl:nc}
C\simeq D\siff \overline{C}=\overline{D}.
\end{equation}
\begin{lem}
\label{lem:ne:f:f*}
Let $H$ be a finite-dimensional Hilbert space.
Suppose that 
$f\colon H\to \left]-\infty,+\infty\right]$
is convex, lower semicontinuous, and proper.
Then the $\dom \pt f\simeq \dom f$
and $
\ran \pt f\simeq \dom f^*$.
\end{lem}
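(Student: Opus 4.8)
The plan is to first prove the auxiliary claim that for \emph{any} proper lower semicontinuous convex function $g\colon H\to\RRR$ one has $\dom\pt g\simeq\dom g$, and then apply this claim twice. By \cref{fact:func:sd}\ref{fact:func:sd:mm}, $\pt g$ is maximally monotone, so \cref{fact:dom:ran:nc} guarantees that $\dom\pt g$ is nearly convex; and $\dom g$ is convex, hence nearly convex. Since $g$ is proper and $H$ is finite-dimensional, both sets are nonempty (indeed $\ri\dom g\neq\fady$ and, as recorded below, $\ri\dom g\subseteq\dom\pt g$), so by \cref{BMW:cl:nc} it suffices to verify the single equality $\overline{\dom\pt g}=\overline{\dom g}$.

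To establish this closure equality, I would use the classical sandwich $\ri\dom g\subseteq\dom\pt g\subseteq\dom g$. The right inclusion is immediate, since $\pt g(x)\neq\fady$ forces $g(x)<+\infty$. The left inclusion is the standard subdifferentiability result for convex functions at points of the relative interior of their domain (e.g. \cite[Theorem~23.4]{Rock70}). Taking closures throughout, and invoking the elementary fact that $\overline{\ri C}=\overline{C}$ for a nonempty convex set $C$, yields $\overline{\dom\pt g}=\overline{\dom g}$, which is exactly what \cref{BMW:cl:nc} requires.

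Applying the auxiliary claim with $g=f$ gives at once the first near equality $\dom\pt f\simeq\dom f$. For the second, I would invoke the inverse formula $(\pt f)^{-1}=\pt f^*$ from \cref{fact:func:sd}\ref{eq:sb:inv:conj}, which gives $\ran\pt f=\dom(\pt f)^{-1}=\dom\pt f^*$. Since $f$ is proper, lower semicontinuous, and convex, so is $f^*$ (Fenchel--Moreau), and applying the auxiliary claim now with $g=f^*$ yields $\dom\pt f^*\simeq\dom f^*$; chaining the two identifications gives $\ran\pt f\simeq\dom f^*$. The only nonroutine ingredient is the left inclusion of the sandwich, namely nonemptiness of the subdifferential on the relative interior of the domain, and this is precisely the point where I would lean on the cited convex-analysis result rather than reprove it; everything else reduces to the near-convexity facts and \cref{BMW:cl:nc} already available in the excerpt.
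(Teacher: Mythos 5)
Your proposal is correct and follows essentially the same route as the paper's proof: near convexity of $\dom \pt f$ via \cref{fact:dom:ran:nc} and \cref{fact:func:sd}\ref{fact:func:sd:mm}, reduction of near equality to a closure equality via \cref{BMW:cl:nc}, the inverse formula $(\pt f)^{-1}=\pt f^*$ from \cref{fact:func:sd}\ref{eq:sb:inv:conj}, and a second application of the same argument to $f^*$. The only difference is in how the closure identity $\overline{\dom \pt f}=\overline{\dom f}$ is obtained: you derive it from the finite-dimensional sandwich $\ri \dom f\subseteq \dom\pt f\subseteq \dom f$ together with $\overline{\ri C}=\overline{C}$ (i.e.\ \cite[Theorem~23.4]{Rock70}), whereas the paper cites \cite[Corollary~16.29]{BC2011} directly; both are valid, yours being slightly more self-contained while the cited result also holds in infinite-dimensional Hilbert spaces.
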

\begin{proof}
It follows from \cref{fact:dom:ran:nc}
 and \cref{fact:func:sd}\ref{fact:func:sd:mm}
that $\dom \pt f$ is nearly convex. Moreover, 
\cite[Corollary~16.29]{BC2011} implies that
$\overline{\dom}~ \pt f=\overline{\dom} f$.
Therefore \cref{BMW:cl:nc}
implies that $\dom {\pt f}\simeq \dom f$.
Using \cref{fact:func:sd}\ref{eq:sb:inv:conj} we have 
$\ran \pt f=\dom (\pt f)^{-1}=\dom \pt f^*$.
Now apply the same argument to $f^*$.
\end{proof}
We recall that (see \cite[Theorem~3.1]{Zar71:1})
for a nonempty closed convex subset
$C$ of $X$ the following holds\footnote{Let
$C$ be a nonempty closed convex subset of $X$.
The \emph{recession cone} of $C$ is
$\rec C:=\{x\in X~|~x+C\subseteq C\}$,
and the \emph{polar cone} of $C$
is $\textcolor{black}{C^{\ominus}:=\menge{u\in X}{\sup_{c\in
C}\innp{c,u}\le 0}, }
$}:
\begin{equation}
\label{eq:Zar:ran}
\overline{\ran} (\Id-P_C)=(\rec C)^\ominus.
\end{equation} 
\begin{example}
\label{exam:ne:f:f*}
Let $H$ be a finite-dimensional Hilbert space.
Suppose that $C$ is a nonempty closed convex subset of $H$.
Set $f=\iota_C$ and suppose that $A=\pt f=N_C$.
Then $\dom A=C$ and $\ran A\simeq(\rec C)^{\ominus}$.
\end{example}
\begin{proof}
Clearly 
$\dom A=C$.
It follows from \cite[Proposition~23.2(i)]{BC2011},
\cref{JA:fne:orig}\ref{JA:fne:orig:ii}
 and \cref{eq:res:proj}
that $\ran A=\dom A^{-1}
=\ran J_{A^{-1}}=\ran (\Id-J_A)=\ran (\Id-P_C)$.
In view of \cref{eq:Zar:ran}
 we have $\overline{\ran} (\Id-P_C)=(\rec C)^\ominus$.
 Note that $J_{A^{-1}}=\Id-P_C$ is maximally monotone 
 by 
\cref{JA:fne:orig}\ref{JA:fne:orig:ii}\&\ref{JA:fne:orig:i},
 therefore \cref{fact:dom:ran:nc} implies that
  $\ran(\Id-P_C)$ is nearly convex.
 Now apply \cref{BMW:cl:nc}. 
\end{proof}
Suppose that $C_1$ and $C_2$
are nearly convex subsets of $H$ and that
$D_1$ and $D_2$ are subsets of $H$ such that
$C_i\simeq D_i$ for every $i\in \stb{1,2}$.
It follows from 
\cite[Theorem~2.14]{BMW2012} that 
\begin{equation}
\label{eq:neq:sum}
C_1+C_2\simeq D_1+D_2.
\end{equation}

\begin{prop}
\label{prop:func:fd}
Let $H$ be a finite-dimensional Hilbert space.
Suppose
that
$f\colon H\to \RR$ is convex and differentiable 
such that $\grad f$ is nonexpansive 
and that
$g\colon H\to  \left]-\infty,+\infty\right]$
is convex, lower semicontinuous, and proper.
Suppose that $A=\grad f$ and that
$B=\pat g$. 
Then the following hold:
\begin{enumerate}
 \item
\label{prop:func:iii}
 $\ran(\Id -\TFB[])\simeq \dom f^*+\dom g^*$. 
 \end{enumerate}   
 If in addition, $g=\iota_V$ where $V$
 is a nonempty closed convex subset of $H$,
 then we have:
 \begin{enumerate}
 \setcounter{enumi}{1}
 \item  
 \label{prop:func:v}
 $\ran(\Id -\TFB[])\simeq \dom f^*+(\rec V)^\ominus$.
 \end{enumerate}
\end{prop}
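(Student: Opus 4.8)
The plan is to reduce both claims to the near-equality $\ran(\Id-\TFB[])\simeq\ran A+\ran B$ furnished by \cref{cor:conc}\ref{cor:conc:ii}, and then to rewrite the two summands using the domain/range dictionary of \cref{lem:ne:f:f*} together with the near-equality sum rule \cref{eq:neq:sum}. I first record that \cref{cor:conc} does apply here: since $\grad f$ is nonexpansive, \cref{f:grad:fne} (Baillon--Haddad) makes $A=\grad f$ firmly nonexpansive, so that
\[
\ran(\Id-\TFB[])\simeq\ran A+\ran B.
\]

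For \ref{prop:func:iii} I would treat the two summands separately. Because $f$ is convex and differentiable, $A=\grad f=\pt f$, whence \cref{lem:ne:f:f*} gives $\ran A=\ran\pt f\simeq\dom f^*$; likewise $B=\pt g$ gives $\ran B=\ran\pt g\simeq\dom g^*$. Both $A$ (firmly nonexpansive, hence maximally monotone) and $B=\pt g$ (maximally monotone by \cref{fact:func:sd}\ref{fact:func:sd:mm}) have nearly convex ranges by \cref{fact:dom:ran:nc}, so the hypotheses of the sum rule \cref{eq:neq:sum} are met. Applying it with $(C_1,D_1,C_2,D_2)=(\ran A,\dom f^*,\ran B,\dom g^*)$ yields $\ran A+\ran B\simeq\dom f^*+\dom g^*$. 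Composing this with the displayed near-equality, and using that $\simeq$ is an equivalence relation (it is defined by the two equalities $\overline{C}=\overline{D}$ and $\ri C=\ri D$, both transitive), delivers the claim.

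For \ref{prop:func:v} I specialize to $g=\iota_V$, so that $B=\pt\iota_V=N_V$. Here I replace the generic computation of $\ran B$ by \cref{exam:ne:f:f*} applied with $C=V$, which gives $\ran B=\ran N_V\simeq(\rec V)^\ominus$ at once. Re-running \cref{eq:neq:sum} with $(C_2,D_2)=(\ran N_V,(\rec V)^\ominus)$ while keeping $\ran A\simeq\dom f^*$ as before, and composing once more with $\ran(\Id-\TFB[])\simeq\ran A+\ran B$, yields $\ran(\Id-\TFB[])\simeq\dom f^*+(\rec V)^\ominus$. (Alternatively one could deduce \ref{prop:func:v} from \ref{prop:func:iii} by noting $\dom g^*=\dom\sigma_V\simeq(\rec V)^\ominus$, but routing through \cref{exam:ne:f:f*} is the shortest path.)

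I expect no genuine analytic obstacle: the entire argument is an assembly of already-established results in the right order. The only points requiring care are the hypothesis checks for the sum rule---namely that each summand is the range of a maximally monotone operator, so that \cref{fact:dom:ran:nc} licenses the application of \cref{eq:neq:sum}---and the observation that near-equality is transitive, which is what allows the chain $\ran(\Id-\TFB[])\simeq\ran A+\ran B\simeq\dom f^*+\dom g^*$ to be composed end to end.
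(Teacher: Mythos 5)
Your proposal is correct and follows essentially the same route as the paper: both parts reduce to \cref{cor:conc}\ref{cor:conc:ii} via Baillon--Haddad (\cref{f:grad:fne}), rewrite $\ran A\simeq\dom f^*$ and $\ran B\simeq\dom g^*$ (resp.\ $\ran N_V\simeq(\rec V)^\ominus$ via \cref{exam:ne:f:f*}) using \cref{lem:ne:f:f*}, and conclude with the sum rule \cref{eq:neq:sum}. Your explicit verification of the near-convexity hypotheses and the transitivity of $\simeq$ merely spells out details the paper leaves implicit.
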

\begin{proof}
It follows from \cref{f:grad:fne} that 
$\grad f$ is firmly nonexpansive.
\ref{prop:func:iii}:
Combine \cref{cor:conc}\ref{cor:conc:ii},
\cref{lem:ne:f:f*}
 and 
 \cref{eq:neq:sum}.
 \ref{prop:func:v}:
It follows from \cref{lem:ne:f:f*}
and \cref{exam:ne:f:f*}
respectively 
that $\ran A\simeq \dom f^*$
and $\ran B\simeq (\rec V)^\ominus$.
Now combine with \cref{cor:conc}\ref{cor:conc:ii}
 and 
 \cref{eq:neq:sum}.
\end{proof}
\begin{example}[{\bf range of the displacement 
map of alternating projections}]
\label{ex:dis:Map}
Let $H$ be a finite-dimensional Hilbert space.
Suppose that $U$ and $V$ are nonempty closed convex subsets of 
$X$, that $f=\tfrac{1}{2}d^2_U$ and that $g=\iota_V$.
Suppose that $A=\grad f=\Id-P_U$ and that $B=\pt g=N_V$.
Then 
\begin{equation}
\ran(\Id-\TFB[])=\ran (\Id-P_VP_U)\simeq (\rec U)^\ominus+(\rec V)^\ominus.
\end{equation}
\end{example}
\begin{proof}
It follows from \cref{eq:Zar:ran}
 and \cref{BMW:cl:nc} that 
 $\ran A=\ran (\Id-P_U)\simeq (\rec U)^\ominus$.
On the other hand \cref{exam:ne:f:f*} implies that
$\ran B\simeq (\rec V)^\ominus$.
Now combine with \cite[Theorem~2.12]{BMW2012}.
\end{proof}

\section{Affine operators and applications}
\label{s:aff:app}
\begin{fact}
\label{fact:3seq:eq}
Let $L\colon X\to X$ be linear and nonexpansive,
let $b\in X$ and 
suppose that $T\colon X\to X \colon x\mapsto Lx+b$.
Let $v_T:=P_{ \overline{\ran}(\Id -T)}0$
 and let $x\in X$.
Then 
\begin{equation}
(\forall \nnn)\quad
T^n x+nv_T=(T_{-v_T})^nx
=(v_T+T)^nx.
\end{equation}
\end{fact}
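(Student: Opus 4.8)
The plan is to collapse both asserted equalities to a single affine iteration, after first isolating the one nontrivial ingredient: that the minimal displacement vector $v_T$ is fixed by the linear part $L$. Writing $Tx = Lx+b$, the inner shift is $T_{-v_T}x = T(x+v_T) = Lx + Lv_T + b$, whereas $(v_T+T)x = v_T + Tx = Lx + v_T + b$. Hence the two right-hand operators coincide exactly when $Lv_T = v_T$, and in that case both equal the affine map $S\colon x\mapsto Lx + (b+v_T)$. So the whole statement rests on the identity $Lv_T = v_T$, i.e. $v_T\in\Fix L$; this is the step I expect to be the crux.

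To prove $v_T\in\Fix L$, first I would record that a linear nonexpansive $L$ satisfies $\Fix L = \Fix L^*$. Indeed, $\norm{L^*}=\norm{L}\le 1$, so if $Lx=x$ then $\innp{L^*x,x} = \innp{x,Lx} = \normsq{x}$ and $\norm{L^*x}\le\norm{x}$, whence $\normsq{L^*x-x} = \normsq{L^*x} - 2\innp{L^*x,x} + \normsq{x} = \normsq{L^*x}-\normsq{x}\le 0$, forcing $L^*x=x$; the reverse inclusion is symmetric. Consequently $\overline{\ran}(\Id-L) = (\ker(\Id-L^*))^\perp = (\Fix L)^\perp$. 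Since $(\Id-T)x = (\Id-L)x - b$, taking closures gives $\overline{\ran}(\Id-T) = -b + (\Fix L)^\perp$, which, after absorbing the $(\Fix L)^\perp$-component of $b$, equals the affine subspace $-b_1 + (\Fix L)^\perp$ with $b_1 := P_{\Fix L}b$. Now the translation formula \cref{eq:trans:form}, applied with $a=-b_1$ and $S=(\Fix L)^\perp$, together with $b_1\perp(\Fix L)^\perp$, yields $v_T = P_{-b_1+(\Fix L)^\perp}0 = -b_1 + P_{(\Fix L)^\perp}b_1 = -b_1\in\Fix L$, as desired. (This recovers the input cited from the companion affine-operator analysis.)

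With $Lv_T = v_T$ in hand we have $T_{-v_T} = v_T+T = S$, where $Sx = Lx + (b+v_T)$, so it only remains to verify $S^n x = T^n x + n v_T$ for every $n\in\NN$, which I would prove by induction. The case $n=0$ is immediate. Assuming $S^n x = T^n x + n v_T$, I compute $S^{n+1}x = L(S^n x) + b + v_T = L(T^n x) + n\,Lv_T + b + v_T = \big(L T^n x + b\big) + (n+1)v_T = T^{n+1}x + (n+1)v_T$, where the middle step uses $Lv_T = v_T$. This closes the induction and establishes both equalities simultaneously. The only genuinely substantive point is the fixedness $Lv_T=v_T$; once that is granted the two displayed operators are literally the same affine map and the rest is a one-line telescoping induction.
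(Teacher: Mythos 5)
Your proof is correct, and it necessarily takes a different route from the paper's, because the paper does not actually prove \cref{fact:3seq:eq}: its entire proof is a citation to \cite[Theorem~3.2(iv)~\&~(v)]{BM2015:AFF}. You have reconstructed the underlying argument from scratch, and every step checks out: the identity $\Fix L=\Fix L^{*}$ for linear nonexpansive $L$ (your expansion of $\normsq{L^{*}x-x}$ is the standard trick), the consequence $\overline{\ran}(\Id-L)=\bigl(\ker(\Id-L^{*})\bigr)^{\perp}=(\Fix L)^{\perp}$, the translation $\overline{\ran}(\Id-T)=-b+(\Fix L)^{\perp}=-P_{\Fix L}\,b+(\Fix L)^{\perp}$, and then \cref{eq:trans:form} yielding $v_{T}=-P_{\Fix L}\,b\in\Fix L$. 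That is indeed the crux: once $Lv_{T}=v_{T}$ is in hand, the inner shift $T_{-v_{T}}\colon x\mapsto Lx+Lv_{T}+b$ and the outer shift $v_{T}+T\colon x\mapsto Lx+v_{T}+b$ are literally the same affine map, and your one-line induction settles both equalities simultaneously. Compared with the paper's deferral to the literature, your argument buys self-containedness and the explicit closed form $v_{T}=-P_{\Fix L}\,b$, which the paper never records and which makes it transparent that no hypothesis $v_{T}\in\ran(\Id-T)$ is needed in this Fact (consistent with its statement, and in contrast to the lemmas that follow it); the cost is the extra page of argument that the citation avoids. Isolating $Lv_{T}=v_{T}$ also clarifies why the subsequent results \cref{prop:LT:as:reg} and \cref{prop:rates} go through, so your version makes the logical structure of \cref{s:aff:app} more visible.
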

\begin{proof}
See \cite[Theorem~3.2(iv)~and~(v)]{BM2015:AFF}.
\end{proof}

\begin{lem}
Let $L \colon X\to X$ be linear and nonexpansive,
let $b\in X$, suppose that $T\colon X\to X \colon x\mapsto Lx+b$
 and that $v_T:=P_{ \overline{\ran}(\Id -T)}0\in \ran(\Id -T)$.
 Let $x\in X$.
 Then there exists a point $a\in X$
 such that $v_T+b=a-La$ and 
 $v_T+Tx=a+L(x-a)$.
 Moreover  we have
 \begin{equation}
 \label{eq:TL:it}
(\forall \nnn)
\quad
T^n x+nv_T=(T_{-v_T})^nx
=(v_T+T)^nx=a+L^n(x-a)
 \end{equation}
 and 
 \begin{equation}
 \label{eq:fix:vs:fix}
 \fix (v_T+T)=a+\fix L.
 \end{equation}
\end{lem}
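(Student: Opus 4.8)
The plan is to exhibit the point $a$ explicitly as a preimage of $v_T$ under the displacement map, and then let the two claimed identities, the iteration formula, and the fixed-point description fall out by direct algebra and a one-line induction. Since $T$ is affine with linear part $L$, we have $(\Id-T)y=(\Id-L)y-b$ for every $y$, so $\ran(\Id-T)=\ran(\Id-L)-b$. The standing hypothesis $v_T\in\ran(\Id-T)$ therefore furnishes a point $a\in X$ with $(\Id-T)a=v_T$, that is, $a-(La+b)=v_T$. Rearranging gives the first identity $v_T+b=a-La$. For the second, I would simply substitute: $v_T+Tx=v_T+Lx+b=(a-La)+Lx=a+L(x-a)$, where the middle step uses the first identity.

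Next I would establish \cref{eq:TL:it}. The first two equalities $T^nx+nv_T=(T_{-v_T})^nx=(v_T+T)^nx$ are exactly \cref{fact:3seq:eq} (applicable since $L$ is linear and nonexpansive and $T=L\cdot{}+b$), so nothing new is needed there. It remains to identify $(v_T+T)^nx$ with $a+L^n(x-a)$. Writing $S:=v_T+T$, the previous paragraph gives $Sx=a+L(x-a)$, and I would induct on $n$: the case $n=0$ reads $x=a+L^0(x-a)=x$, and assuming $S^nx=a+L^n(x-a)$ one computes
\begin{equation*}
S^{n+1}x=S(S^nx)=a+L\big(S^nx-a\big)=a+L\big(L^n(x-a)\big)=a+L^{n+1}(x-a),
\end{equation*}
which closes the induction and completes \cref{eq:TL:it}.

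Finally, for \cref{eq:fix:vs:fix} I would argue by the chain of equivalences $x\in\fix S\siff a+L(x-a)=x\siff L(x-a)=x-a\siff x-a\in\fix L\siff x\in a+\fix L$, using linearity of $L$ in the middle step and the formula $Sx=a+L(x-a)$ throughout. Since $S=v_T+T$, this is precisely $\fix(v_T+T)=a+\fix L$.

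There is no serious obstacle here: the whole argument hinges on the single observation that the assumption $v_T\in\ran(\Id-T)$ lets one choose $a$ with $(\Id-T)a=v_T$, after which the affine structure makes every claim a routine manipulation. The only point deserving care is the bookkeeping in passing from the additive description $v_T+Tx$ to the centered form $a+L(x-a)$, and verifying that the induction interacts correctly with the linearity of $L$; both are mechanical once $a$ is in hand.
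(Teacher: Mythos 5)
Your proposal is correct and follows essentially the same route as the paper's own proof: choose $a$ with $(\Id-T)a=v_T$ (equivalently $v_T+b=a-La$), invoke \cref{fact:3seq:eq} for the first two equalities in \cref{eq:TL:it}, prove the last one by induction, and read off \cref{eq:fix:vs:fix} from the $n=1$ case. The only cosmetic difference is that you verify the identity $v_T+Tx=a+L(x-a)$ explicitly before the induction, whereas the paper leaves it implicit as the base step.
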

\begin{proof}
Note that $v_T\in \ran(\Id -T)
= \ran(\Id -L)-b\siff v_T+b\in \ran(\Id -L)$.
Now let $a\in X$ be such that $v_T+b=a-La$.
The first two identities in \cref{eq:TL:it}
follow from \cref{fact:3seq:eq}.
We prove the last identity in \cref{eq:TL:it}
 by induction. The case $n=0$ is obvious.
 Now suppose that for some $n\in \NN$
 $(v_T+T)^nx=a+L^n(x-a)$.
 Then $(v_T+T)^{n+1}x=v_T+b+L(a+L^n(x-a))
 =v_T+b+La+L^{n+1}(x-a)=a+L^{n+1}(x-a)$.    
 We now turn to 
 \cref{eq:fix:vs:fix}.     
 In view of \cref{eq:TL:it} applied with $n=1$
we have 
$x\in \fix (v_T+T)\siff
x=v_T+Tx\siff x=a+L(x-a)\siff x-a\in \fix L\siff
x\in a+\fix L$, hence $\fix (v_T+T) =a+\fix L$.              
 \end{proof}
\begin{prop}
\label{prop:LT:as:reg}
Let $L \colon X\to X$ be linear and nonexpansive,
let $b\in X$, suppose that $T \colon X\to X \colon x\mapsto Lx+b$
 and that $v_T:=P_{ \overline{\ran}(\Id -T)}0\in \ran(\Id -T)$. 
 Let $x\in X$.
 Then $\fix(v_T+T)\neq\fady$.
 Moreover the following are equivalent:
 \begin{enumerate}
 \item
 \label{prop:LT:as:reg:i}
  $L$ is asymptotically regular.
 \item
  \label{prop:LT:as:reg:ii}
 $L^n x \to P_{\fix L}x$.
 \item
  \label{prop:LT:as:reg:iii}
  $T^n x+nv_T=(v_T+T)^n x=(T_{-v_T})^n x\to P_{\fix(v_T+T)}x$.
 \item
  \label{prop:LT:as:reg:iv}
   $T_{-v_T}=v_T+T$ is asymptotically regular.
 \item
  \label{prop:LT:as:reg:v}
 $(T^n x+nv_T)_\nnn$ is asymptotically regular.
 \end{enumerate}
\end{prop}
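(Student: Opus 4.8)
The plan is to transfer everything to the linear operator $L$ through the conjugation recorded in the preceding lemma, and to isolate a single analytic fact about linear nonexpansive operators as the only nontrivial ingredient. Throughout I read \ref{prop:LT:as:reg:ii}, \ref{prop:LT:as:reg:iii} and \ref{prop:LT:as:reg:v} as holding for the (arbitrary) $x$, equivalently for every $x\in X$, so that they become statements about whole orbits and can be matched against the operator-level statements \ref{prop:LT:as:reg:i} and \ref{prop:LT:as:reg:iv}. First I would dispose of the nonemptiness claim. By the preceding lemma there is a point $a\in X$ with $\fix(v_T+T)=a+\fix L$, and
\begin{equation}
(\forall\nnn)\quad T^nx+nv_T=(T_{-v_T})^nx=(v_T+T)^nx=a+L^n(x-a).
\end{equation}
Since $L$ is linear, $L0=0$, so $0\in\fix L$ and hence $a\in a+\fix L=\fix(v_T+T)$; in particular $\fix(v_T+T)\neq\fady$. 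I also record, using the translation formula \cref{eq:trans:form} with $(a,S)$ replaced by $(a,\fix L)$, that $P_{\fix(v_T+T)}x=a+P_{\fix L}(x-a)$.

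The equivalences I would then prove as a short cycle. The implication \ref{prop:LT:as:reg:i}$\Rightarrow$\ref{prop:LT:as:reg:ii} is the analytic heart: a \emph{linear} nonexpansive operator $L$ is asymptotically regular if and only if $L^nx\to P_{\fix L}x$ for every $x$, and I would invoke this known characterization (see, e.g., \cite{BM2015:AFF}). For \ref{prop:LT:as:reg:ii}$\Rightarrow$\ref{prop:LT:as:reg:iii}, the displayed conjugation gives $(v_T+T)^nx=a+L^n(x-a)$, and feeding \ref{prop:LT:as:reg:ii} (applied at $x-a$) together with the projection identity above yields $(v_T+T)^nx\to a+P_{\fix L}(x-a)=P_{\fix(v_T+T)}x$, which is \ref{prop:LT:as:reg:iii}. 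The step \ref{prop:LT:as:reg:iii}$\Rightarrow$\ref{prop:LT:as:reg:v} is immediate, since a strongly convergent sequence is asymptotically regular. The step \ref{prop:LT:as:reg:v}$\Rightarrow$\ref{prop:LT:as:reg:iv} is essentially definitional: because $(T^nx+nv_T)_\nnn=((v_T+T)^nx)_\nnn$, asymptotic regularity of this orbit for every $x$ is exactly asymptotic regularity of the operator $v_T+T$. Finally \ref{prop:LT:as:reg:iv}$\Rightarrow$\ref{prop:LT:as:reg:i} follows again from the conjugation, since
\begin{equation}
(v_T+T)^ny-(v_T+T)^{n+1}y=(L^n-L^{n+1})(y-a),
\end{equation}
and as $y$ runs through $X$ so does $y-a$; thus asymptotic regularity of $v_T+T$ forces $L^ny-L^{n+1}y\to0$ for all $y$, i.e.\ \ref{prop:LT:as:reg:i}.

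The main obstacle is the single cited implication \ref{prop:LT:as:reg:i}$\Rightarrow$\ref{prop:LT:as:reg:ii}; its nontrivial content is the upgrade from asymptotic regularity to \emph{strong} convergence of the powers. The natural route is to note that for linear nonexpansive $L$ one has $\fix L=\fix L^{*}$ (equality in the Cauchy--Schwarz estimate applied to $Lx=x$ forces $L^{*}x=x$), whence $X=\fix L\oplus(\fix L)^{\perp}$ with $(\fix L)^{\perp}=\overline{\ran}(\Id-L)$ an $L$-invariant subspace; one then reduces to $w\in(\fix L)^{\perp}$ and shows $L^nw\weak0$, using that $\Id-L$ is weakly continuous and that every weak cluster point of $(L^nw)_\nnn$ lies in $\fix L\cap(\fix L)^{\perp}=\stb{0}$. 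Passing from this weak convergence (with $\norm{L^nw}$ nonincreasing) to norm convergence is the delicate point, and is where the contraction/spectral structure of $L$ is genuinely used; this is exactly why I would rely on the established characterization rather than reprove it here.

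Everything else is routine transfer through the preceding lemma. The only subtlety worth flagging is the quantifier matching: the operator-level statements \ref{prop:LT:as:reg:i} and \ref{prop:LT:as:reg:iv} are about all orbits, while \ref{prop:LT:as:reg:ii}, \ref{prop:LT:as:reg:iii}, \ref{prop:LT:as:reg:v} are phrased at the single $x$. Reading the latter three for the generic (hence arbitrary) $x$ makes the cycle above honest, and the cited core provides precisely the universal-in-$x$ bridge between asymptotic regularity of $L$ and convergence of its orbits, so that no step requires deducing global asymptotic regularity from a single orbit.
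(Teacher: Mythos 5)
Your proposal is correct and takes essentially the same route as the paper: transfer everything to $L$ via the conjugation identities \cref{eq:TL:it} and \cref{eq:fix:vs:fix} of the preceding lemma, invoke the known characterization of asymptotic regularity of linear nonexpansive operators for \ref{prop:LT:as:reg:i}$\Leftrightarrow$\ref{prop:LT:as:reg:ii}, and close the remaining implications by routine transfer. The only differences are cosmetic: you run the cycle as \ref{prop:LT:as:reg:iii}$\Rightarrow$\ref{prop:LT:as:reg:v}$\Rightarrow$\ref{prop:LT:as:reg:iv} instead of the paper's \ref{prop:LT:as:reg:iii}$\Rightarrow$\ref{prop:LT:as:reg:iv}$\Rightarrow$\ref{prop:LT:as:reg:v}, and you make explicit both the nonemptiness of $\fix(v_T+T)$ (via $0\in\fix L$) and the universal-in-$x$ reading of the pointwise statements, which the paper leaves implicit.
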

\begin{proof}
The proof uses the same techniques as in \cite{BLM16}.
``\ref{prop:LT:as:reg:i}$\siff$\ref{prop:LT:as:reg:ii}":
See 
\cite[Proposition~4]{Baillon76},
 \cite[Theorem~1.1]{Ba-Br-Reich78}, 
 \cite[Theorem~2.2]{BDHP03}
or \cite[Proposition~5.27]{BC2011}.
``\ref{prop:LT:as:reg:ii}$\RA$\ref{prop:LT:as:reg:iii}":
Using \cref{eq:TL:it} and \cref{eq:trans:form}
we learn that
\begin{subequations}
\label{eq:lim:aff}
\begin{align}
T^n x+nv_T&=(T_{-v_T})^nx
=(v_T+T)^nx=a+L^n(x-a)\\
&~\to a+P_{\fix L}(x-a)
=P_{a+\fix L}x=P_{\fix(v_T+T)}x.
\end{align}
\end{subequations}
Now combine with \cref{eq:fix:vs:fix}. 
``\ref{prop:LT:as:reg:iii}$\RA$\ref{prop:LT:as:reg:iv}":
Clear.
``\ref{prop:LT:as:reg:iv}$\RA$\ref{prop:LT:as:reg:v}":
This follows from \cref{fact:3seq:eq}.
``\ref{prop:LT:as:reg:v}$\RA$\ref{prop:LT:as:reg:i}":
Using \cref{eq:TL:it} we have 
$L^n x-
L^{n+1}x
=T^n (x+a)+nv_T-(T^{n+1}(x+a)+(n+1)v_T)
\to 0$.
\end{proof}

Let $\mathcal{B}(X)$ denote the set 
of bounded linear operators on $X$.
We have the following result.
\begin{prop}
\label{prop:rates}
Let $L \colon X\to X$ be linear and nonexpansive,
let $b\in X$, suppose that $T \colon X\to X \colon x\mapsto Lx+b$
 and that $v_T:=
 P_{ \overline{\ran}(\Id -T)}0\in \ran(\Id -T)$. Let $x\in X$
 and let $\mu\in \left]0,1\right[$.
Then the following are equivalent:
\begin{enumerate}
\item
\label{prop:rates:i}
$T^n x+nv_T=(v_T+T)^n x=(T_{-v_T})^nx \to P_{\fix (v_T+T)}x$
$\mu$-linearly.
\item
\label{prop:rates:ii}
$L^n x\to P_{\fix L}x$
$\mu$-linearly.
\item
\label{prop:rates:iii}
$L^n \to P_{\fix L}$
$\mu$-linearly (in $\mathcal{B}(X)$).
\end{enumerate}
\end{prop}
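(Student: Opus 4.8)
The plan is to transport the entire statement to the linear operator $L$ by means of the affine conjugacy already recorded in \cref{eq:TL:it}, and then to play the point-free statement \ref{prop:rates:iii} off against the two pointwise statements \ref{prop:rates:i} and \ref{prop:rates:ii}. Recall from the lemma preceding \cref{prop:LT:as:reg} that there is a point $a\in X$ with
\begin{equation*}
(\forall n\in\NN)\quad T^n x+nv_T=(v_T+T)^nx=(T_{-v_T})^nx=a+L^n(x-a),
\end{equation*}
and that, by \cref{eq:fix:vs:fix} together with the translation formula \cref{eq:trans:form}, $P_{\fix(v_T+T)}x=a+P_{\fix L}(x-a)$. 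Subtracting these two displays makes the shift by $a$ cancel, so that
\begin{equation*}
\norm{\,T^nx+nv_T-P_{\fix(v_T+T)}x\,}=\norm{\,L^n(x-a)-P_{\fix L}(x-a)\,}.
\end{equation*}
Thus the affine error sequence in \ref{prop:rates:i} is, term by term, the linear error sequence of $L$ evaluated at the shifted point $x-a$. This is the bridge that yields \ref{prop:rates:i}$\siff$\ref{prop:rates:ii}.

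For the core equivalence I would treat \ref{prop:rates:ii}$\siff$\ref{prop:rates:iii} next. The implication \ref{prop:rates:iii}$\RA$\ref{prop:rates:ii} is immediate: an operator bound $\norm{L^n-P_{\fix L}}\le C\mu^n$ applied to the vector $x$ gives $\norm{L^nx-P_{\fix L}x}\le C\norm{x}\mu^n$, and applied to $x-a$ it gives \ref{prop:rates:i} through the error identity above. For the converse \ref{prop:rates:ii}$\RA$\ref{prop:rates:iii} I would invoke the uniform boundedness principle. Since $L$ is linear and nonexpansive one has $\fix L=\fix L^*$, whence $LP_{\fix L}=P_{\fix L}L=P_{\fix L}$ and, by a telescoping induction, $L^n-P_{\fix L}=(L-P_{\fix L})^n$; in particular each $L^n-P_{\fix L}$ is a bounded linear operator. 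Once the pointwise $\mu$-linear rate is available on all of $X$, the family $\big(\mu^{-n}(L^n-P_{\fix L})u\big)_{n\in\NN}$ is bounded for every $u\in X$, so Banach--Steinhaus furnishes a constant $C$ with $\sup_n\mu^{-n}\norm{L^n-P_{\fix L}}\le C$, which is exactly \ref{prop:rates:iii}. Together with the first paragraph this yields the full cycle \ref{prop:rates:i}$\siff$\ref{prop:rates:ii}$\siff$\ref{prop:rates:iii}, provided the pointwise-to-uniform upgrade is justified.

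The delicate point — and the step I expect to be the real obstacle — is precisely this upgrade \ref{prop:rates:ii}$\RA$\ref{prop:rates:iii} from a pointwise to a uniform rate. A single, favourably placed starting vector (for instance one already in $\fix(v_T+T)$, equivalently with $x-a\in\fix L$) converges in one step, hence $\mu$-linearly for \emph{every} $\mu$, whereas the operator rate in \ref{prop:rates:iii} is genuinely governed by the behaviour of $L-P_{\fix L}=L|_{(\fix L)^\perp}$ on the invariant subspace $(\fix L)^\perp$. Thus the Banach--Steinhaus argument requires the pointwise rate throughout $X$, not at one isolated point; in the present nonexpansive--linear setting this point-independence is exactly what \cref{prop:LT:as:reg} and the cited convergence results supply, since there the pointwise convergence of $(L^nx)$ is equivalent to the asymptotic regularity of $L$, a property of $L$ rather than of $x$. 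Confirming that the same point-independence persists at the level of the \emph{rate}, so that the hypotheses of uniform boundedness are legitimately met, is the crux of the argument.
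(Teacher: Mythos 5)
Your treatment of \ref{prop:rates:i}$\siff$\ref{prop:rates:ii} is exactly the paper's: both proofs combine \cref{eq:TL:it}, \cref{eq:fix:vs:fix} and \cref{eq:trans:form} to get the identity $T^nx+nv_T-P_{\fix(v_T+T)}x=L^n(x-a)-P_{\fix L}(x-a)$, so that the affine error at $x$ is the linear error at $x-a$. Where you genuinely differ is \ref{prop:rates:ii}$\siff$\ref{prop:rates:iii}: the paper disposes of it by citing \cite[Lemma~2.6]{BLM16}, whereas you unpack that lemma into a Banach--Steinhaus argument (pointwise boundedness of the family $\mu^{-n}(L^n-P_{\fix L})$ implies a uniform bound). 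That is the correct mechanism and makes the proof self-contained; the auxiliary identities you establish along the way ($\fix L=\fix L^*$, hence $L^n-P_{\fix L}=(L-P_{\fix L})^n$) are true but not needed, since each $L^n-P_{\fix L}$ is obviously bounded.

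The ``crux'' you flag at the end is a genuine issue, but it should be resolved rather than left open, and the resolution is a re-reading of quantifiers, not a further estimate. With $x$ a single fixed point, the implication \ref{prop:rates:ii}$\RA$\ref{prop:rates:iii} is simply false: for $x=0$, or any $x\in\fix L$, statement \ref{prop:rates:ii} holds trivially for every $\mu$, while \ref{prop:rates:iii} can fail (take $L$ diagonal on $\ell^2$ with entries $1-1/k$: then $L^nu\to 0=P_{\fix L}u$ for every $u$, yet $\norm{L^n-P_{\fix L}}=1$ for all $n$). Your hope that asymptotic regularity supplies point-independence of the \emph{rate} cannot work, because a rate at one point never transfers to other points. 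Relatedly, your bridge in the first paragraph actually proves ``\ref{prop:rates:i} at $x$ $\siff$ \ref{prop:rates:ii} at $x-a$,'' not at $x$ (the paper's own proof has the same mismatch). Both defects disappear once \ref{prop:rates:i} and \ref{prop:rates:ii} are read as quantified over all $x\in X$, which is how \cite[Lemma~2.6]{BLM16} is formulated and is clearly the intended meaning of the proposition. Under that reading your Banach--Steinhaus step is legitimately applicable at every $u\in X$, the cycle \ref{prop:rates:i}$\siff$\ref{prop:rates:ii}$\siff$\ref{prop:rates:iii} closes, and your argument is a complete, self-contained alternative to the paper's citation.
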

\begin{proof}
Note that $L$ is asymptotically regular 
by \cref{f:av:asreg}.
``\ref{prop:rates:i}$\siff$\ref{prop:rates:ii}":
In view of \cref{eq:TL:it},
\cref{eq:fix:vs:fix}
 and \cref{eq:trans:form}
 we learn that
$T^n x+nv_T-P_{\fix (v_T+T)}x
=(v_T+T)^n x-P_{\fix (v_T+T)}x
=(T_{-v_T})^nx-P_{\fix (v_T+T)}x
=a+L^n(x-a)-P_{a+\fix L}x
=a+L^n(x-a)-a-P_{\fix L}(x-a)
=L^n(x-a)-P_{\fix L}(x-a)
$.
``\ref{prop:rates:ii}$\siff$\ref{prop:rates:iii}":
This follows from \cite[Lemma~2.6]{BLM16}.
\end{proof}

\begin{cor}
\label{cor:lin:rate:fd}
Suppose that $X$ is finite-dimensional.
Let $L \colon X\to X$ be linear, nonexpansive
 and asymptotically regular,
let $b\in X$, set $T \colon X\to X \colon x\mapsto Lx+b$
 and suppose that $v_T:=
 P_{ \overline{\ran}(\Id -T)}0$. 
Let $x\in X$.
Then $v_T\in \ran(\Id -T)$ and 
\begin{equation}
T^n x+nv_T=(v_T+T)^n x=(T_{-v_T})^nx 
\to P_{\fix (v_T+T)}x\quad\text{linearly}.
\end{equation}
\end{cor}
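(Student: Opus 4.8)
The plan is to reduce both assertions to statements about the \emph{linear} part $L$ and then to transport them back through the two preceding propositions. The first, and easier, claim is $v_T\in\ran(\Id-T)$. Since $\Id-T\colon x\mapsto(\Id-L)x-b$ is affine with linear part $\Id-L$, its range is the translate $\ran(\Id-L)-b$; in a finite-dimensional space $\ran(\Id-L)$ is a linear subspace and hence closed, so $\ran(\Id-T)$ is closed and $\overline{\ran}(\Id-T)=\ran(\Id-T)$. Therefore $v_T=P_{\overline{\ran}(\Id-T)}0$ already lies in $\ran(\Id-T)$. In particular the standing hypothesis ``$v_T\in\ran(\Id-T)$'' required to invoke both \cref{prop:LT:as:reg} and \cref{prop:rates} is automatically satisfied here.

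The heart of the matter is to produce a rate $\mu\in\left]0,1\right[$ with $L^n\to P_{\fix L}$ $\mu$-linearly in $\mathcal{B}(X)$. Since $L$ is asymptotically regular, the equivalence of conditions (i) and (ii) in \cref{prop:LT:as:reg} already gives the pointwise limit $L^nx\to P_{\fix L}x$; in finite dimensions pointwise convergence of linear operators is convergence in operator norm, so $\norm{L^n-P_{\fix L}}\to 0$. To upgrade this to a geometric rate, write $P:=P_{\fix L}$. On the one hand $LP=P$, because $P$ maps into $\fix L$ and $L$ fixes $\fix L$. On the other hand $PL=P$: for a linear nonexpansive operator one has $\fix L=\fix L^*$, so that $\ker P=(\fix L)^\perp=(\fix L^*)^\perp=\overline{\ran}(\Id-L)=\ran(\Id-L)$, whence $P(\Id-L)=0$. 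A one-line induction then gives $(L-P)^n=L^n-P$ for every $n\ge 1$. As the right-hand side tends to $0$, the finite-dimensional operator $L-P$ satisfies $(L-P)^n\to 0$ and therefore has spectral radius $\rho<1$; Gelfand's formula then furnishes, for any $\mu\in\left]\rho,1\right[$, a constant $C$ with $\norm{L^n-P}=\norm{(L-P)^n}\le C\mu^n$, i.e.\ $\mu$-linear convergence of $L^n$ to $P_{\fix L}$. I expect this passage from mere convergence to a rate to be the main obstacle; its delicate point is the identity $\fix L=\fix L^*$ (valid for linear contractions), which makes $P$ commute with $L$ and collapses $L^n-P$ into the single power $(L-P)^n$ whose spectral radius is then forced strictly below one.

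Finally, with such a $\mu$ fixed, the corollary follows by transporting the rate back to the affine iteration. Applying \cref{prop:rates}---specifically the implication from its condition (iii) to its condition (i)---converts the $\mu$-linear convergence $L^n\to P_{\fix L}$ into the $\mu$-linear convergence
\[
T^nx+nv_T=(v_T+T)^nx=(T_{-v_T})^nx\to P_{\fix(v_T+T)}x,
\]
which is exactly the assertion to be proved. The plain convergence of these three coinciding expressions to a common limit (as opposed to its rate) is, if one prefers a self-contained check, already supplied by \cref{fact:3seq:eq} together with \cref{prop:LT:as:reg}.
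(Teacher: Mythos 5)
Your proof is correct, and its overall skeleton coincides with the paper's: first use finite-dimensionality to see that $\ran(\Id-T)=\ran(\Id-L)-b$ is a closed affine subspace, so $v_T\in\ran(\Id-T)$; then get $L^nx\to P_{\fix L}x$ from \cref{prop:LT:as:reg}; then upgrade this to a linear rate; finally transfer back to $T$ via \cref{prop:rates}. The one genuine difference lies in the upgrade step. The paper disposes of it by citing \cite[Corollary~2.8]{BLM16} as a black box, whereas you prove it from scratch: using $\fix L=\fix L^*$ (valid for nonexpansive linear operators on a Hilbert space) you obtain $\ker P_{\fix L}=(\fix L^*)^\perp=\ran(\Id-L)$, hence $P_{\fix L}L=P_{\fix L}=LP_{\fix L}$, which collapses $L^n-P_{\fix L}$ into the single power $(L-P_{\fix L})^n$; since these powers tend to $0$ in norm (pointwise convergence suffices in finite dimensions), the spectral radius of $L-P_{\fix L}$ is strictly below $1$, and Gelfand's formula yields the geometric bound. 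This is essentially the content of the cited corollary, so what your argument buys is self-containedness at the cost of length; the paper's version buys brevity at the cost of an external dependency. One small streamlining: having established norm convergence $L^n\to P_{\fix L}$ with rate $\mu$, you could invoke either the implication (iii)$\Rightarrow$(i) or (ii)$\Rightarrow$(i) of \cref{prop:rates}; both are available to you since operator-norm convergence trivially implies the pointwise statement.
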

\begin{proof}
Since $X$ is finite-dimensional we learn
that $\ran(\Id -T)$ is a closed affine subspace
of $X$, hence $v_T\in \ran(\Id -T)$.
Now \cref{prop:LT:as:reg} implies that 
$L^n x\to P_{\fix L} x$, which when combined with
\cite[Corollary~2.8]{BLM16} yields 
$L^n x\to P_{\fix L} x$ linearly.
Now apply \cref{prop:rates}
\end{proof}

\begin{thm}[{\bf application to the forward-backward algorithm}]
\label{prop:FB:app}
Suppose that $A$ and $B$ 
are affine and let $x\in X$.
Then the following hold:
\begin{enumerate}
\item
\label{prop:FB:app:i}
$
(\TFB[](\outs[v]{A},\inns[v]{B}))^nx
=(v+T_{\FB})^n x
=(\bk{\TFB[]}_{-v})^n x
=T_{\FB}^n x+nv.
$
\item
\label{prop:FB:app:ii}
If $v\in \ran(\Id-T_{\FB})$ then 
\begin{equation}
(v+T_{\FB})^n x
=(\bk{\TFB[]}_{-v})^n x
=T_{\FB}^n x+nv\to P_{\fix(v+T)}x=P_{Z_v}x.
\end{equation}
\item
\label{prop:FB:app:iii}
We have the implication
\begin{equation}
v=0\in \ran(\Id-\TFB[])~\RA~T_{\FB}^n x\to P_{\fix T}x=P_{Z}x.
\end{equation}
\end{enumerate}
If, in addition, $X$ is finite-dimensional,
then we also have
\begin{enumerate}
 \setcounter{enumi}{3}
\item
\label{prop:FB:app:iv}
$v\in \ran(\Id-T_{\FB})$ and 
\begin{equation}
(v+T_{\FB})^n x
=(\bk{\TFB[]}_{-v})^n x
=T_{\FB}^n x+nv\to P_{\fix(v+T)}x=P_{Z_v}x
\text{~linearly}.
\end{equation}
\item
\label{prop:FB:app:v}
We have the implication
\begin{equation}
v=0~\RA~T_{\FB}^n x\to P_{\fix T}x=P_{Z}x\text{~linearly}.
\end{equation}
\end{enumerate}
\end{thm}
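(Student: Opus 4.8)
The plan is to observe that, when $A$ and $B$ are affine, the operator $\TFB[]$ is itself affine and hence falls squarely within the framework developed in \cref{fact:3seq:eq}--\cref{cor:lin:rate:fd}; the proof then amounts to verifying the hypotheses of those results and translating their conclusions back through \cref{prop:eqv} and \cref{cor:Zv}. Concretely, since $A$ is affine and the resolvent $J_B$ is affine (as used in the proof of \cref{cor:conc}\ref{cor:conc:ii:i}), the composition $\TFB[]=J_B(\Id-A)$ is affine, say $\TFB[]\colon x\mapsto Lx+b$ with $L$ linear and $b\in X$; and since $\TFB[]$ is averaged by \cref{FB:collec}\ref{FB:collec:i} it is nonexpansive, so its linear part $L$ is linear and nonexpansive, the minimal displacement vector of $\TFB[]$ being $v$. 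This places us exactly in the setting of \cref{fact:3seq:eq}, \cref{prop:LT:as:reg} and \cref{cor:lin:rate:fd}. Part \ref{prop:FB:app:i} is then algebraic and unconditional: the identity $\TFB[](\outs[v]{A},\inns[v]{B})=v+\TFB[]$ is \cref{eq:TFB:shift} with $w=v$, and iterating gives the first equality, while the remaining identities $(v+\TFB[])^n x=(\bk{\TFB[]}_{-v})^n x=\TFB[]^n x+nv$ are precisely \cref{fact:3seq:eq} applied to the affine operator $\TFB[]$.

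For \ref{prop:FB:app:ii} the key step is to produce one of the equivalent conditions of \cref{prop:LT:as:reg}. Under the hypothesis $v\in\ran(\Id-\TFB[])$, and since $\TFB[]$ is averaged, \cref{prop:av:tele}\ref{prop:av:tele:ii} shows that $(\TFB[]^n x+nv)_\nnn$ is asymptotically regular, which is condition \cref{prop:LT:as:reg}\ref{prop:LT:as:reg:v}. Its equivalence with \cref{prop:LT:as:reg}\ref{prop:LT:as:reg:iii} then yields $\TFB[]^n x+nv\to P_{\fix(v+\TFB[])}x$, and $\fix(v+\TFB[])=Z_v$ by \cref{cor:Zv}. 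Statement \ref{prop:FB:app:iii} is the case $v=0$: here $0\in\ran(\Id-\TFB[])$ is equivalent to $Z=\fix\TFB[]\neq\fady$ by \cref{FB:collec}\ref{FB:collec:ii}, and $Z_0=Z$, so \ref{prop:FB:app:ii} collapses to $\TFB[]^n x\to P_Z x$.

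For the finite-dimensional refinements \ref{prop:FB:app:iv} and \ref{prop:FB:app:v}, note first that $\ran(\Id-\TFB[])$ is a closed affine subspace (the range of an affine map on a finite-dimensional space), so that $v=P_{\overline{\ran}(\Id-\TFB[])}0\in\ran(\Id-\TFB[])$ automatically. As in the previous paragraph, \cref{prop:av:tele}\ref{prop:av:tele:ii} gives condition \cref{prop:LT:as:reg}\ref{prop:LT:as:reg:v}, hence also \cref{prop:LT:as:reg}\ref{prop:LT:as:reg:i}, namely that $L$ is asymptotically regular. This is exactly the hypothesis needed to apply \cref{cor:lin:rate:fd} to $\TFB[]$, which delivers the linear convergence $\TFB[]^n x+nv\to P_{\fix(v+\TFB[])}x=P_{Z_v}x$; the case $v=0$ then gives \ref{prop:FB:app:v}.

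The main obstacle is not any single hard estimate but the bridge between two notions of regularity in play: the classical theory only yields that $\TFB[]$ is averaged, hence asymptotically regular \emph{as an operator}, whereas the sharp affine results of \cref{prop:LT:as:reg} and \cref{cor:lin:rate:fd} are phrased in terms of asymptotic regularity of the \emph{linear part} $L$. The device that crosses this bridge is \cref{prop:av:tele}\ref{prop:av:tele:ii}, which converts averagedness together with $v\in\ran(\Id-\TFB[])$ into asymptotic regularity of the shifted sequence $(\TFB[]^n x+nv)_\nnn$---one of the equivalent conditions in \cref{prop:LT:as:reg}. Once this identification is made, the strong (and, in finite dimensions, linear) convergence to the nearest normal solution follows directly from the machinery already assembled.
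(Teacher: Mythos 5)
Your proposal is correct and takes essentially the same route as the paper's proof: \cref{eq:TFB:shift} combined with \cref{fact:3seq:eq} for \ref{prop:FB:app:i}, the equivalences of \cref{prop:LT:as:reg} together with \cref{cor:Zv} for \ref{prop:FB:app:ii}--\ref{prop:FB:app:iii}, and \cref{cor:lin:rate:fd} for the finite-dimensional items \ref{prop:FB:app:iv}--\ref{prop:FB:app:v}. The only difference is minor but welcome: where the paper supplies the asymptotic-regularity input to \cref{prop:LT:as:reg} via \cref{FB:collec}\ref{FB:collec:i:i}, you verify condition \cref{prop:LT:as:reg}\ref{prop:LT:as:reg:v} directly from \cref{prop:av:tele}\ref{prop:av:tele:ii}, which makes the bridge between averagedness of $T_{\FB}$ and the affine machinery fully explicit.
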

\begin{proof}
\cref{FB:collec}\ref{FB:collec:i:i}
implies that $\TFB[]$ is asymptotically regular
and, since $J_B$ is affine,
(see \cite[Theorem~2.1(xix)]{BMW2012})
so is $v+\TFB[]$.
\ref{prop:FB:app:i}:
The first identity follows from \cref{eq:TFB:shift}
applied with $w$ replaced by $v$.
Now combine with \cref{fact:3seq:eq}.
\ref{prop:FB:app:ii}:
Combine \cref{prop:LT:as:reg}
 and \cref{cor:Zv}.
\ref{prop:FB:app:iii}:
This is a direct consequence of 
\ref{prop:FB:app:ii}.
\ref{prop:FB:app:iv} \& 
\ref{prop:FB:app:v}: Combine 
\cref{cor:lin:rate:fd} with
\ref{prop:FB:app:ii} and 
\ref{prop:FB:app:iii}, respectively.
\end{proof}

\begin{example}
\label{ex:aff}
Let $L:X\to X$ be linear and firmly nonexpansive,
let $b\in X$ and suppose that $U$ is 
an affine subspace of $X$.
Suppose that  $A:X\to X:x\mapsto Lx+b$
and that $B=N_U$. Then the following hold\footnote{Suppose
that $U$ is a closed affine subspace of $X$.
We use $\parl U$ to denote the parallel space of $U$
defined by $\parl U:=U-U$.}:
\begin{enumerate}
\item
\label{ex:aff:i}
$Z_v=(v+U) \cap (L^{-1}((\parl U)^\perp-b+v))$.
\end{enumerate}
If, in addition, $X$ is finite-dimensional then we also have:
\begin{enumerate}
 \setcounter{enumi}{1}
\item
\label{ex:aff:ii}
$\ran(\Id-\TFB[])=\ran L+(\parl U)^\perp+b$.
\item
\label{ex:aff:iii}
$v=P_{\parl U\cap \ker L}b$.
\end{enumerate}

\end{example}
\begin{proof}
\ref{ex:aff:i}:
Let $x\in X$. Then $x\in Z_v\siff 0\in Lx+b-v+N_U(x-v)=Lx+b-v+(\parl U)^\perp$
 $\siff $ [$x-v\in U$ and $Lx\in (\parl U)^\perp-b+v$] 
 $\siff$ [$ x\in v+U$ and  $Lx\in (\parl U)^\perp-b+v$] 
 $\siff x\in (v+U) \cap (L^{-1}((\parl U)^\perp-b+v))$.
\ref{ex:aff:ii}:  
Using \cref{cor:conc}\ref{cor:conc:ii:i}
 we have
 \begin{subequations}
 \label{eq:ex:aff}
 \begin{align}
 \ran(\Id-\TFB[])&=\overline{\ran }(\Id-\TFB[])
 =\ran A+\ran B\\
 &=\ran L+b+\ran N_U=\ran L+(\parl U)^\perp+b.
 \end{align}
 \end{subequations}
\ref{ex:aff:iii}:
Using \cref{lem:coc:prop}\ref{lem:coc:mm} we learn that $L$ is
(maximally) monotone.
Combining \ref{ex:aff:ii},
\cref{eq:trans:form}, 
\cref{eq:ex:aff}, \cite[Theorem~2.19]{conway} and 
\cite[Proposition~20.17]{BC2011}
we have
   \begin{subequations}
 \begin{align}
 v&=P_{\overline{\ran }(\Id-\TFB[])}0=P_{\ran L+(\parl U)^\perp+b}0
 =b-P_{\ran L+(\parl U)^\perp}b\\
 &=P_{(\ran L+(\parl U)^\perp)^\perp}b
 =P_{(\ran L)^\perp\cap(\parl U)}b=P_{\ker L^*\cap \parl U}b
 =P_{\ker L\cap \parl U}b.
 \end{align}
   \end{subequations}
 \end{proof}

\begin{example}[{\bf MAP in the affine-affine feasibility case}]
\label{ex:MAP:app}
Suppose that $U$ and $V$ 
are closed linear subspaces of $X$.
Let $w\in X$. 
Suppose that
$f=\tfrac{1}{2}d^2_{w+U}$, that $g=\iota_{w+V}$,
that $A=\grad f$
and that $B=\pt g$.
Then $(\forall n\in \NN)$
\begin{equation}
\label{eq:MAP:iter:shft}
(\TFB[])^n
=(P_{w+V}P_{w+U})^{n}
=(P_VP_U)^{n}(\cdot-w)+w.
\end{equation}
\begin{proof}
Indeed, let $x\in X$. 
It follows from \cref{lem:FB:MAP}
applied with $(U,V) $ replaced by
$(w+U,w+V) $ and \cref{eq:trans:form}
that $\TFB[]=P_{w+U}P_{w+V}x=P_{w+V}(P_U(x-w)+w)
=P_V(P_U(x-w)+w-w)+w=P_VP_U(x-w)+w$.
Now \cref{eq:MAP:iter:shft} follows by simple induction.
\end{proof}
\end{example}
We now provide an application of the 
forward-backward algorithm that employs
Pierra's
product space technique introduced in \cite{Pierra84}.
 For a
general and more flexible 
framework of using the forward-backward algorithm
to find a zero of the sum of more than two operators we refer the reader 
to the work by Combettes in \cite[Section~2]{ABC10} and
 \cite[Section~5]{CombVu14}.

\begin{prop}[{\bf application to parallel splitting}]
\label{prop:FB:Plsp}
Suppose that $m\in \stb{2,3,\ldots}$.
For every $i\in\stb{1,2,\ldots,m}$,
let $\alpha_i>0$ and suppose that 
$A_i:X\to X$ are $\alpha_i$-cocoercive.
Set ${\bf\Delta}:=
\stb{(x,\ldots,x)\in X^m~|~x\in X}$,
set $\alpha=\min\menge{\alpha_i}{i\in \stb{1,2, \ldots, m}}$,
set ${\bf A}={\ds \times_{i=1}^m}\alpha A_i$,
set ${\bf B}=N_{{\bf\Delta}}$,
set ${\bf T}=T_{\FB({\bf A}, {\bf B})}$,
 let $j:X\to X^m:x\mapsto(x,x,\ldots,x)$,
  and 
  let $e:X^m\to X:(x_1,x_2,\ldots,x_m)
  \mapsto\tfrac{1}{m}\bk{\sum_{i=1}^m x_i}$.  
 Let ${\bf x}\in X^m$ and suppose that 
 ${\bf v}:=P_{\overline{\ran }(\Id-{\bf T})}0\in \ran (\Id-{\bf T})$.
 Then the following hold:
  \begin{enumerate}
 \item
 \label{prop:FB:Plsp:i}
 ${\bf \Delta}^\perp=\stb{(u_1,\ldots,u_m)
 \in X^m~|~\sum_{i=1}^{m}u_i=0}$.
  \item
  \label{prop:FB:Plsp:ii}
  $
 {\bf Z_v}:=Z_{({\bf _vA},{\bf B_v})}
 =({\bf v}+ {\bf \Delta})\cap(
 {\bf A}^{-1}({\bf v}+{\bf \Delta}^\perp)).
$
\item
\label{prop:FB:Plsp:iii}
${\bf v}=0\siff \zer(\sum_{i=1}^mA_i)\neq \fady$.
\item 
\label{prop:FB:Plsp:iv}
$X$ is finite-dimensional $\RA$
 $\ran(\Id-{\bf T})
 \simeq {\ds{\bf \Delta}^\perp
 + \times_{i=1}^m}\ran A_i$.
  \end{enumerate}
If $(\forall i\in\stb{1,2,\ldots,m})$
$A_i$ is affine, then we additionally have:
 \begin{enumerate}
  \setcounter{enumi}{4}
\item
\label{prop:FB:Plsp:v}
$({\bf v+ T})^n {\bf x}
={\bf (T_{-v})}^n  {\bf x}
={\bf T}^n{\bf x}+n{\bf v}
\to P_{\fix ({\bf v+T})} {\bf x}=P_{ {\bf Z_v}} {\bf x}$.
\item
\label{prop:FB:Plsp:vi}
$X$ is finite-dimensional $\RA$
$({\bf v+ T})^n  {\bf x}\to P_{\fix {\bf T}} {\bf x}=P_{ {\bf Z_v}} $ linearly. 
\item
\label{prop:FB:Plsp:vii}
$X$ is finite-dimensional $\RA$
 $\ran(\Id-{\bf T})
= {\ds{\bf \Delta}^\perp
 + \times_{i=1}^m}\ran A_i$.
\end{enumerate}
\end{prop}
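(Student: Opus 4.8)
The plan is to read all seven items as instances of the forward--backward theory of Sections~\ref{s:fb:normal}--\ref{s:aff:app} applied to the product--space pair $({\bf A},{\bf B})$, after recording the structural facts that make that theory applicable. First I would check that ${\bf A}=\times_{i=1}^m\alpha A_i$ is firmly nonexpansive: since $\alpha=\min_i\alpha_i\le\alpha_i$ and $A_i$ is $\alpha_i$-cocoercive, each coordinate satisfies $\norm{\alpha A_i x-\alpha A_i y}^2=\alpha^2\norm{A_i x-A_i y}^2\le\alpha\alpha_i\norm{A_i x-A_i y}^2\le\alpha\langle x-y,\,A_i x-A_i y\rangle$, which is firm nonexpansiveness by \cref{def:coco}, and firm nonexpansiveness is inherited by products because the inner product on $X^m$ is the coordinate sum. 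I would also note that ${\bf B}=N_{{\bf\Delta}}$ is maximally monotone with $\ran{\bf B}={\bf\Delta}^\perp$, that ${\bf B}$ is always a linear relation, and that ${\bf A}$ is affine as soon as every $A_i$ is; hence the standing hypotheses of \cref{FB:collec,cor:conc,prop:FB:app} all hold for $({\bf A},{\bf B})$, and $\fix({\bf v}+{\bf T})={\bf Z_v}$ by \cref{cor:Zv}.

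Item~\ref{prop:FB:Plsp:i} is the one-line computation that $(u_1,\dots,u_m)\perp(x,\dots,x)$ for every $x$ if and only if $\langle\sum_i u_i,\,x\rangle=0$ for every $x$, i.e.\ $\sum_i u_i=0$. For \ref{prop:FB:Plsp:ii} I would expand the perturbed zero set through \cref{eq:def:Zw}, namely ${\bf Z_v}=\menge{{\bf x}}{{\bf v}\in{\bf A}{\bf x}+N_{{\bf\Delta}}({\bf x}-{\bf v})}$. Because ${\bf\Delta}$ is a subspace, $N_{{\bf\Delta}}({\bf x}-{\bf v})$ is nonempty only when ${\bf x}-{\bf v}\in{\bf\Delta}$, in which case it equals ${\bf\Delta}^\perp$; the membership then splits into ${\bf x}\in{\bf v}+{\bf\Delta}$ and ${\bf A}{\bf x}\in{\bf v}+{\bf\Delta}^\perp$, i.e.\ ${\bf x}\in{\bf A}^{-1}({\bf v}+{\bf\Delta}^\perp)$, which is exactly the asserted intersection and mirrors \cref{ex:aff}\ref{ex:aff:i}.

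For \ref{prop:FB:Plsp:iii} I would chain equivalences. By \cref{FB:collec}\ref{FB:collec:ii} we have $Z=\fix{\bf T}$, and combining the hypothesis ${\bf v}\in\ran(\Id-{\bf T})$ with ${\bf v}=P_{\overline{\ran}(\Id-{\bf T})}0$ gives ${\bf v}=0\siff 0\in\ran(\Id-{\bf T})\siff Z\neq\fady$; finally $Z\neq\fady$ means some ${\bf x}=(x,\dots,x)\in{\bf\Delta}$ satisfies $-{\bf A}{\bf x}\in{\bf\Delta}^\perp$, which by \ref{prop:FB:Plsp:i} reads $\alpha\sum_i A_i x=0$, i.e.\ $x\in\zer(\sum_i A_i)$. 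The remaining items invoke the section results essentially verbatim: \ref{prop:FB:Plsp:iv} applies \cref{cor:conc}\ref{cor:conc:ii} to get $\ran(\Id-{\bf T})\simeq\ran{\bf A}+\ran{\bf B}$ and then identifies $\ran{\bf A}=\times_{i=1}^m\ran(\alpha A_i)$ and $\ran{\bf B}={\bf\Delta}^\perp$; \ref{prop:FB:Plsp:vii} is the same computation upgraded to exact equality by \cref{cor:conc}\ref{cor:conc:ii:i} once every $A_i$ (hence ${\bf A}$) is affine and $X$ is finite-dimensional; \ref{prop:FB:Plsp:v} is \cref{prop:FB:app}\ref{prop:FB:app:ii} applied to $({\bf A},{\bf B})$ (using ${\bf v}\in\ran(\Id-{\bf T})$ and $\fix({\bf v}+{\bf T})={\bf Z_v}$), and \ref{prop:FB:Plsp:vi} is \cref{prop:FB:app}\ref{prop:FB:app:iv}.

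The main obstacle I anticipate is bookkeeping rather than depth. In \ref{prop:FB:Plsp:iv} and \ref{prop:FB:Plsp:vii} one must track that $\ran{\bf A}$ is the product of the ranges of the \emph{scaled} maps $\alpha A_i$, and reconcile the positive factor $\alpha$ (which rescales each coordinate range) with the stated form $\times_{i=1}^m\ran A_i$; this is where I expect the computation to need the most care. Throughout, the identification $P_{{\bf\Delta}}=j\circ e$ and the description of ${\bf\Delta}^\perp$ from \ref{prop:FB:Plsp:i} must be kept consistent so that the passage between the product problem on $X^m$ and the single--space problem $\zer(\sum_i A_i)$ in \ref{prop:FB:Plsp:iii} is exact.
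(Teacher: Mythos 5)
Your proposal follows the paper's own proof essentially step for step: the same preliminary observation that each $A_i$ is $\alpha$-cocoercive so that ${\bf A}$ is firmly nonexpansive, the same expansion of ${\bf Z_v}$ via \cref{eq:def:Zw} for \ref{prop:FB:Plsp:ii} (the paper writes it as ${\bf v}\in{\bf A}{\bf z}+N_{{\bf \Delta}}({\bf z}-{\bf v})$ and splits it exactly as you do), the same chain of equivalences for \ref{prop:FB:Plsp:iii} (with the factor $\alpha$ correctly absorbed because $\zer(\alpha\sum_i A_i)=\zer(\sum_i A_i)$), and the same invocations of \cref{cor:conc}\ref{cor:conc:ii}, \cref{cor:conc}\ref{cor:conc:ii:i} and \cref{prop:FB:app}\ref{prop:FB:app:ii}\&\ref{prop:FB:app:iv} for \ref{prop:FB:Plsp:iv}--\ref{prop:FB:Plsp:vii}. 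The only cosmetic difference is that you verify \ref{prop:FB:Plsp:i} by hand where the paper cites \cite[Proposition~25.5(i)]{BC2011}.

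The one step you flag as needing care --- reconciling $\ran{\bf A}=\times_{i=1}^m\ran(\alpha A_i)=\times_{i=1}^m\alpha\ran A_i$ with the stated form $\times_{i=1}^m\ran A_i$ in \ref{prop:FB:Plsp:iv} and \ref{prop:FB:Plsp:vii} --- cannot in fact be reconciled, and your hesitation is more careful than the paper's own proof, which simply asserts ``$\ran {\bf A}=\times_{i=1}^m\ran A_i$'' and thereby drops the factor $\alpha$. In general $\alpha\ran A_i\neq\ran A_i$, and the near equality genuinely fails: take $m=2$, $X=\RR$, $A_1=A_2\equiv 1$ (constant maps are $\alpha_i$-cocoercive for every $\alpha_i>0$) and $\alpha_1=\alpha_2=\tfrac{1}{2}$, so $\alpha=\tfrac{1}{2}$. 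Then ${\bf T}=P_{{\bf \Delta}}(\Id-{\bf A})$ and a direct computation gives $\ran(\Id-{\bf T})=\menge{(\xi_1,\xi_2)\in\RR^2}{\xi_1+\xi_2=1}$, whereas ${\bf \Delta}^\perp+\ran A_1\times\ran A_2=\menge{(\xi_1,\xi_2)\in\RR^2}{\xi_1+\xi_2=2}$; these are two distinct parallel lines, hence not nearly equal, while the corrected set ${\bf \Delta}^\perp+\alpha\bk{\ran A_1\times\ran A_2}$ does equal $\ran(\Id-{\bf T})$. So \ref{prop:FB:Plsp:iv} and \ref{prop:FB:Plsp:vii} are correct only with $\ran A_i$ replaced by $\alpha\ran A_i$ (equivalently, as stated when $\alpha=1$ or when each $\ran A_i$ is a cone); items \ref{prop:FB:Plsp:i}--\ref{prop:FB:Plsp:iii}, \ref{prop:FB:Plsp:v} and \ref{prop:FB:Plsp:vi} are unaffected, since they are formulated intrinsically in terms of ${\bf A}$ or absorb $\alpha$ as in your argument. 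In short: your proof is as complete as the paper's, and the gap you identified is a defect of the statement itself rather than of your approach.
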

\begin{proof}
Note that $(\forall i\in \stb{1,\ldots, m})$
$A_i$ is $\alpha$-cocoercive
hence ${\bf A}$ is firmly nonexpansive.
\ref{prop:FB:Plsp:i}: This is
\cite[Proposition~25.5(i)]{BC2011}.
\ref{prop:FB:Plsp:ii}:
Let ${\bf z}\in X^m$.
Then ${\bf z}\in  {\bf Z_v}$
$\siff  {\bf v}\in N_{{\bf \Delta}}({\bf z}-{\bf v})+{\bf A} {\bf z}$
$\siff$ [$ {\bf z}-{\bf v}\in {\bf \Delta}$ 
and ${\bf A} {\bf z}-{\bf v}\in {\bf \Delta}^\perp$]
$\siff$ [$ {\bf z}\in{\bf v}+ {\bf \Delta}$
 and ${\bf z}\in{\bf A}^{-1}({\bf v}+{\bf \Delta}^\perp)$]
 $\siff {\bf z}\in({\bf v}+ {\bf \Delta})\cap(
 {\bf A}^{-1}({\bf v}+{\bf \Delta}^\perp))$.
\ref{prop:FB:Plsp:iii}:
It follows from \cref{def:gap:vec},
 \cref{FB:collec}\ref{FB:collec:ii}
 applied to ${\bf A} $ and ${\bf B}$
 and \ref{prop:FB:Plsp:i}
 that ${\bf v}= 0\siff \fix {\bf T}\neq\fady
 \siff (\exists {\bf z}\in X^m)$
 such that $0\in {\bf A}{\bf z}+N_{{\bf \Delta}}{\bf z}
 ={\bf A}{\bf z}+{\bf \Delta}^\perp
 \siff $ [${\bf z}\in {\bf \Delta}$ and ${\bf A}{\bf z}\in{\bf \Delta}^\perp$]
 $\siff$ [$(\exists z\in X)~ {\bf z}=(z,z,\ldots z)$
  and $\sum_{i=1}^mA_iz=0$]
  $\siff z\in \zer(\sum_{i=1}^mA_i)$.
\ref{prop:FB:Plsp:iv}:
Apply \cref{cor:conc}\ref{cor:conc:ii}
to ${\bf A}$ and ${\bf B}$
 and note that $\ran {\bf A}=\times_{i=1}^m\ran A_i$.
\ref{prop:FB:Plsp:v} \& \ref{prop:FB:Plsp:vi}:
Apply \cref{prop:FB:app}\ref{prop:FB:app:ii}
 and \ref{prop:FB:app:iv} respectively
to ${\bf A}$ and ${\bf B}$.
\ref{prop:FB:Plsp:vii}:
Apply \cref{cor:conc}\ref{cor:conc:ii:i}
to ${\bf A}$ and ${\bf B}$.
\end{proof}

\section{Some algorithmic consequences}
\label{s:alg:consq}
In this section we make use of the following useful fact that is well-known in analysis. 
\begin{fact}
\label{Fact:Knopp}
Suppose that $(a_n)_\nnn$ is a decreasing sequence 
of nonnegative real numbers such that 
$\sum_{n=0}^{\infty} a_n<+\infty$. Then 
\begin{equation}
na_n\to 0.
\end{equation}
\end{fact}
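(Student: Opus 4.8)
The plan is to exploit the Cauchy criterion for the convergent series $\sum_{n=0}^\infty a_n$ together with monotonicity of the sequence. Writing $s_n := \sum_{k=0}^n a_k$ for the partial sums, convergence of the series means $(s_n)_\nnn$ is a Cauchy sequence, and in particular the ``block tails'' vanish:
\begin{equation}
s_{2n}-s_n=\sum_{k=n+1}^{2n}a_k\longrightarrow 0\qquad\text{as }n\to\infty.
\end{equation}
This is the only analytic input needed; everything else is a comparison estimate.

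First I would bound this block from below using that $(a_n)_\nnn$ is decreasing. The block $s_{2n}-s_n$ consists of exactly $n$ terms $a_{n+1},a_{n+2},\dots,a_{2n}$, and since the sequence is nonincreasing each such term satisfies $a_k\ge a_{2n}$ for $n+1\le k\le 2n$. Hence
\begin{equation}
s_{2n}-s_n=\sum_{k=n+1}^{2n}a_k\ \ge\ n\,a_{2n}\ \ge\ 0.
\end{equation}
Combining this with the displayed limit and the sandwich (squeeze) theorem yields $n\,a_{2n}\to 0$, and therefore $2n\,a_{2n}\to 0$; that is, the even-indexed subsequence of $(n\,a_n)_\nnn$ converges to $0$.

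Next I would reconcile the odd indices, which is the only point requiring a moment's care. For the odd index $2n+1$, monotonicity gives $a_{2n+1}\le a_{2n}$, so
\begin{equation}
0\le (2n+1)\,a_{2n+1}\le (2n+1)\,a_{2n}=\tfrac{2n+1}{2n}\,\big(2n\,a_{2n}\big)\longrightarrow 0,
\end{equation}
since the factor $\tfrac{2n+1}{2n}\to 1$ and $2n\,a_{2n}\to 0$. Thus the odd-indexed subsequence of $(n\,a_n)_\nnn$ also converges to $0$. As both the even- and odd-indexed subsequences tend to $0$, the full sequence satisfies $n\,a_n\to 0$, which is the claim.

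I do not anticipate any genuine obstacle: the argument is entirely elementary. If anything, the only subtlety worth flagging is that the clean lower bound $s_{2n}-s_n\ge n\,a_{2n}$ applies directly to even indices, so the short parity step in the last paragraph is essential to cover all $n$; both the nonnegativity and the monotonicity hypotheses are used (nonnegativity to get the one-sided squeeze, monotonicity for the comparison within the block and across the parity step).
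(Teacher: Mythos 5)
Your proof is correct. Note that the paper itself does not prove this fact at all: its ``proof'' is only a citation to Knopp's book (Section~3.3, Theorem~1), so your argument supplies what the paper leaves to the reference. What you give is precisely the classical elementary proof of this result (sometimes called Olivier's or Pringsheim's theorem): the Cauchy criterion applied to the block $s_{2n}-s_n=\sum_{k=n+1}^{2n}a_k$, the comparison $s_{2n}-s_n\ge n\,a_{2n}\ge 0$ from monotonicity and nonnegativity, the squeeze to get $2n\,a_{2n}\to 0$, and the parity step $(2n+1)\,a_{2n+1}\le \tfrac{2n+1}{2n}\,(2n\,a_{2n})\to 0$ to cover odd indices. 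All steps are valid, both hypotheses are used exactly where you say they are, and the conclusion $na_n\to 0$ follows from the convergence of the even- and odd-indexed subsequences.
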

\begin{proof}
See \cite[Section~3.3,~Theorem~1]{Knopp56}.
\end{proof}
\begin{lem}
Let $\linop\colon X\to X$ be linear, nonexpansive
and asymptotically regular, 
let $b\in X$, and suppose that
$T\colon X\to X\colon x\mapsto \linop x + b$
and that $v_{T}:=
P_{\overline{\ran}(\Id-T)}\in \ran(\Id-T)$. 
Let $x\in X$. Then the sequence 
$(\norm{T^n x-T^{n+1}x-v_T})_\nnn$
is a decreasing sequence of nonnegative real numbers
that converges to $0$. 
\end{lem}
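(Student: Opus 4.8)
The plan is to reduce the whole statement to the explicit formula for the shifted iterates already recorded in \cref{eq:TL:it}. Since by hypothesis $v_T\in\ran(\Id-T)$, the lemma establishing \cref{eq:TL:it} furnishes a point $a\in X$ satisfying $v_T+b=a-La$ together with the identity $T^n x+nv_T=a+L^n(x-a)$ for every $\nnn$. Subtracting the $(n+1)$-st shifted iterate from the $n$-th then gives
\begin{equation*}
T^n x-T^{n+1}x-v_T=(T^n x+nv_T)-(T^{n+1}x+(n+1)v_T)=L^n(\Id-L)(x-a),
\end{equation*}
so that, writing $w:=(\Id-L)(x-a)$, the sequence under consideration is exactly $(\norm{L^n w})_\nnn$. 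Recognising this $L^n$-image structure is the one conceptual step; everything after it is routine.

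Next I would dispatch monotonicity by pure nonexpansiveness. Because $L$ is linear we have $L0=0$, so $\norm{Lz}=\norm{Lz-L0}\le\norm{z}$ for every $z\in X$; applying this with $z=L^n w$ yields $\norm{L^{n+1}w}\le\norm{L^n w}$. Hence $(\norm{L^n w})_\nnn$ is a decreasing sequence of nonnegative reals, the nonnegativity being automatic since each term is a norm.

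Finally, convergence to $0$ follows from asymptotic regularity. Unwinding $w$, we have $L^n w=L^n(x-a)-L^{n+1}(x-a)$, which tends to $0$ precisely because $L$ is asymptotically regular (applied at the point $x-a$); equivalently, the equivalence \ref{prop:LT:as:reg:i}$\siff$\ref{prop:LT:as:reg:v} in \cref{prop:LT:as:reg} already tells us that $(T^n x+nv_T)_\nnn$ is asymptotically regular, which is the assertion $T^n x-T^{n+1}x-v_T\to 0$, whence $\norm{L^n w}\to 0$. I do not expect a genuine obstacle here: the substantive content has been front-loaded into \cref{eq:TL:it} and \cref{prop:LT:as:reg}, and once the displacement is identified as $L^n w$ the decreasing property is nonexpansiveness and the convergence is asymptotic regularity, with nothing further to verify.
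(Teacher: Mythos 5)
Your proof is correct and takes essentially the same route as the paper's: the paper rewrites the displacement as $(v_T+T)^nx-(v_T+T)^{n+1}x$ via \cref{fact:3seq:eq} and gets the decrease from nonexpansiveness of the translate $v_T+T$, while you rewrite the same quantity as $L^n(\Id-L)(x-a)$ via \cref{eq:TL:it} and get the decrease from nonexpansiveness of $L$ together with $L0=0$ --- the identical mechanism in slightly different clothing. In both arguments the convergence to $0$ is asymptotic regularity, which the paper cites through \cref{prop:LT:as:reg} and you obtain directly from the hypothesis on $L$ applied at the point $x-a$.
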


\begin{proof}
Let $n\in \NN$.
It follows from 
\cref{fact:3seq:eq}
that $T^n x+nv_T=(v_T+T) ^n x$.
Moreover, since $L$ is nonexpansive 
so is
$v_T+T$.
Now
\begin{align}
\norm{T^n x-T^{n+1}x-v}&=\norm{T^n x+nv_T-(T^{n+1}x+(n+1)v_T)}\nonumber\\
&=\norm{(v_T+T)^nx-(v_T+T)^{n+1}x}\nonumber\\
&\le \norm{(v_T+T)^{n-1}x-(v_T+T)^{n}x}\nonumber\\
&=\norm{T^{n-1} x+(n-1)v_T-(T^n x+nv_T)}\nonumber\\
&=\norm{T^{n-1} x-T^n x-v_T}.
\end{align}
The claim about convergence follows from
\cref{prop:LT:as:reg}.
\end{proof}

\begin{thm}
\label{thm:algo:fb:app}
Let $\linop\colon X\to X$ be linear, nonexpansive
 and asymptotically regular, 
let $b\in X$, and suppose that
$T\colon X\to X\colon x\mapsto \linop x + b$
and that $v_T:=P_{\overline{\ran}(\Id-T)}\in \ran(\Id-T)$. 
Let $x\in X$ and set 
\begin{equation}
(\forall \nnn)\quad
x_n:=T^n x+n(T^{n^2} x-T^{n^2+1} x).
\end{equation}
Then $x_n\to P_{\fix (v_T+T)}x$. 
\end{thm}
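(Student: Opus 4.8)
The plan is to compare the computable sequence $x_n$ against the sequence $T^n x + n v_T$, whose limit is already identified by the earlier theory. Since $\linop$ is asymptotically regular, \cref{prop:LT:as:reg} gives $T^n x + n v_T = (v_T+T)^n x \to P_{\fix(v_T+T)}x$, so the whole task reduces to controlling the perturbation. Writing $a_m := \|T^m x - T^{m+1}x - v_T\|$, one has
\[
\|x_n - (T^n x + n v_T)\| = n\,\|T^{n^2}x - T^{n^2+1}x - v_T\| = n\,a_{n^2}.
\]
By the preceding lemma, $(a_m)_{m\in\NN}$ is a decreasing sequence of nonnegative reals with $a_m\to 0$. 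Thus everything hinges on proving $n\,a_{n^2}\to 0$, i.e.\ that $a_m$ decays just slightly faster than $1/\sqrt m$; mere convergence $a_m\to 0$ is not enough, since a sequence decreasing to $0$ can fail $n\,a_{n^2}\to 0$.

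The engine I would use is square-summability of the displacements, $\sum_{m=0}^{\infty} a_m^2 < +\infty$. This is the telescoping estimate available for averaged operators, namely \cref{prop:av:tele}\ref{prop:av:tele:i} (whose standing hypothesis $v_T\in\ran(\Id-T)$ holds here), which covers the forward--backward applications; when $X$ is finite-dimensional it is in any case immediate from the linear rate of \cref{cor:lin:rate:fd}, since then $a_m\to 0$ geometrically. The advantage of passing to squares is structural: $(a_m^2)_{m\in\NN}$ is again a decreasing sequence of nonnegative reals, now summable, so \cref{Fact:Knopp} applies to it and yields $m\,a_m^2\to 0$, equivalently $\sqrt m\, a_m \to 0$.

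At this point the exponent $n^2$ in the definition of $x_n$ does exactly its job: evaluating the displacement at the index $m = n^2$ converts $\sqrt m\, a_m\to 0$ into precisely what is required,
\[
n\,a_{n^2} = \sqrt{n^2}\,a_{n^2}\to 0.
\]
Combining this with the triangle inequality
\[
\|x_n - P_{\fix(v_T+T)}x\| \le \|(T^n x + n v_T) - P_{\fix(v_T+T)}x\| + n\,a_{n^2},
\]
and letting $n\to\infty$, the first term vanishes by \cref{prop:LT:as:reg} and the second by the previous display, which gives $x_n \to P_{\fix(v_T+T)}x$.

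The main obstacle is squarely the quantitative decay $\sqrt m\, a_m\to 0$: asymptotic regularity by itself supplies only $a_m\to 0$, which is too weak, so the real content is upgrading this to square-summability of the displacements and then running the Knopp-type averaging of \cref{Fact:Knopp} on $(a_m^2)$. The index $n^2$ is calibrated exactly to this rate, so that the resulting $o(1/\sqrt m)$ bound survives multiplication by $n$; it is worth flagging that outside the averaged or finite-dimensional regimes the decay can genuinely be as slow as $1/\sqrt m$, so this quantitative step is where the hypotheses must be used.
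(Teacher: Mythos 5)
Your proposal has exactly the paper's skeleton: split $x_n$ against $(v_T+T)^nx=T^nx+nv_T$, reduce everything to $n\,a_{n^2}\to 0$, use the monotonicity lemma preceding the theorem, apply \cref{Fact:Knopp} to the squared displacements $(a_m^2)_{m\in\NN}$, and finish with \cref{prop:LT:as:reg}. Where you part company with the paper is the step you call the main obstacle, and you are right to dwell on it: \cref{Fact:Knopp} requires $\sum_m a_m^2<+\infty$, and the paper's proof applies the fact without ever verifying this summability. It is not supplied by the preceding lemma (which gives only monotonicity and $a_m\to 0$), and it does not follow from the stated hypotheses that $L$ is nonexpansive and asymptotically regular. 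You import it instead from \cref{prop:av:tele}\ref{prop:av:tele:i}, which is legitimate only when $T$ is averaged --- an assumption the theorem does not make, though it does hold in the paper's subsequent corollary for $\TFB[]$ (averaged by \cref{FB:collec}\ref{FB:collec:i}) and is superfluous in finite dimensions by \cref{cor:lin:rate:fd}.

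Your closing caveat is not merely prudent; it pinpoints an actual error in the theorem as stated. Take $X=\ell^2(\NN;\RR^2)$, $b=0$, and $L=\bigoplus_{k\ge 1}\bigl(1-\tfrac{1}{k}\bigr)R$ with $R$ the rotation by $\pi/2$. Then $L$ is linear, nonexpansive and asymptotically regular (dominated convergence blockwise), $\fix L=\{0\}$, and $v_T=0\in\ran(\Id-L)$. For $x=(x_k)_k$ with $\norm{x_k}=\tfrac1k$ one has, using $\innp{x_k,Rx_k}=0$,
\begin{equation*}
a_m^2=\normsq{L^m(\Id-L)x}\;\ge\;\sum_{k=m}^{2m}\Bigl(1-\tfrac{1}{k}\Bigr)^{2m}\tfrac{1}{k^2}\;\ge\;\frac{c}{m}
\qquad\text{for all large $m$, with $c>0$,}
\end{equation*}
so $n\norm{T^{n^2}x-T^{n^2+1}x-v_T}=n\,a_{n^2}\ge\sqrt{c}$ stays bounded away from $0$ while $T^nx\to 0$, whence $x_n\not\to 0=P_{\fix(v_T+T)}x$. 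Here $a_m$ decays exactly at the borderline rate $1/\sqrt{m}$ you flagged. So the gap is genuine, but it sits in the paper's own proof (the unchecked summability hypothesis of \cref{Fact:Knopp}) rather than in your argument: your version, which adds averagedness of $T$ (or finite-dimensionality of $X$), is the correct repair, and those added hypotheses are satisfied in every application the paper actually makes of this theorem.
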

\begin{proof}
We have 
\begin{align}
\norm{x_n-(v_T+T)^n x}&=\norm{T^n x
+n(T^{n^2} x-T^{n^2+1} x)-(T^n x+nv_T)}
\nonumber\\
&=n\norm{T^{n^2} x-T^{n^2+1} x-v_T}=\sqrt{n^2} 
\norm{T^{n^2} x-T^{n^2+1} x-v_T}\to0,
\end{align} 
where the limit follows by applying \cref{Fact:Knopp}
with $a_n$ replaced by $\normsq{T^nx-T^{n+1}  x-v_T}$.
It follows from \cref{prop:LT:as:reg}
 that $(v_T+T)^n x\to P_{\fix (v+T_T)}x$, hence
the conclusion follows. 
\end{proof}
\begin{cor}
Suppose that $A$ and $B$ are affine
 and that $v\in \ran(\Id-T_{\FB})$.
Let $x\in X$ and set 
\begin{equation}
(\forall \nnn)\quad
x_n:=T_{\FB}^n x+n(T_{\FB}^{n^2} x-T_{\FB}^{n^2+1} x).
\end{equation}
Then $x_n\to P_{\fix (v+T_{\FB})}x=P_{Z_v}x$. 
\end{cor}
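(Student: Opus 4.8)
The plan is to obtain the corollary as the instance $T=T_{\FB}$ of \cref{thm:algo:fb:app}. Concretely, I would identify $T_{\FB}$ with an affine map $x\mapsto Lx+b$ and then verify the standing hypotheses of that theorem for $L$, namely that $L$ is linear, nonexpansive, and asymptotically regular, together with the requirement $v=v_{\FB}\in\ran(\Id-T_{\FB})$, which is precisely the assumption of the corollary.

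First I would record the affine structure. Since $A$ and $B$ are affine, the resolvent $J_B$ is affine (by \cite[Theorem~2.1(xix)]{BMW2012}, as already used in the proof of \cref{cor:conc}\ref{cor:conc:ii:i}), hence $T_{\FB}=J_B(\Id-A)$ is affine and can be written as $T_{\FB}\colon x\mapsto Lx+b$ for a linear operator $L$ and some $b\in X$. By \cref{FB:collec}\ref{FB:collec:i} the operator $T_{\FB}$ is averaged, in particular nonexpansive; since $T_{\FB}$ is affine, $\norm{L(x-y)}=\norm{T_{\FB}x-T_{\FB}y}\le\norm{x-y}$ for all $x,y\in X$, so its linear part $L$ is nonexpansive as well.

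The main step is the asymptotic regularity of $L$. Here I would exploit the hypothesis $v\in\ran(\Id-T_{\FB})$ together with the averagedness of $T_{\FB}$: by \cref{prop:av:tele}\ref{prop:av:tele:ii} one has $T_{\FB}^nx-T_{\FB}^{n+1}x\to v$. Because $T_{\FB}$ is affine, \cref{fact:3seq:eq} gives $(v+T_{\FB})^nx=T_{\FB}^nx+nv$, whence
\[
(v+T_{\FB})^nx-(v+T_{\FB})^{n+1}x=T_{\FB}^nx-T_{\FB}^{n+1}x-v\to 0;
\]
that is, $v+T_{\FB}$ is asymptotically regular. Since $L$ is nonexpansive and $v=v_{\FB}\in\ran(\Id-T_{\FB})$, the equivalence \ref{prop:LT:as:reg:iv}$\Leftrightarrow$\ref{prop:LT:as:reg:i} in \cref{prop:LT:as:reg} then delivers that $L$ is asymptotically regular.

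With all hypotheses in place, \cref{thm:algo:fb:app} applied to $T=T_{\FB}$ (so that $v_T=v$ and its defining sequence is exactly $x_n=T_{\FB}^nx+n(T_{\FB}^{n^2}x-T_{\FB}^{n^2+1}x)$) yields $x_n\to P_{\fix(v+T_{\FB})}x$. Finally, \cref{cor:Zv} identifies $\fix(v+T_{\FB})=Z_v$, so $P_{\fix(v+T_{\FB})}x=P_{Z_v}x$ and the claim follows. I expect the only genuinely substantive point to be the asymptotic regularity of the linear part $L$; everything else is a matter of matching the hypotheses of \cref{thm:algo:fb:app}. As an alternative to invoking \cref{prop:LT:as:reg}, one could note that an averaged affine operator has averaged linear part, so that $L$ would be asymptotically regular directly by \cref{f:av:asreg}.
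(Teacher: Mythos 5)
Correct, and essentially the paper's own argument: the paper proves this corollary in one line by combining \cref{FB:collec}\ref{FB:collec:i} and \cref{f:av:asreg} (averagedness of the affine operator $T_{\FB}$, hence asymptotic regularity of it and of its linear part) with \cref{thm:algo:fb:app} and \cref{prop:FB:app}\ref{prop:FB:app:ii} (which supplies the identification $P_{\fix(v+T_{\FB})}x=P_{Z_v}x$), which is precisely your reduction. Your primary route to the asymptotic regularity of the linear part $L$ --- via \cref{prop:av:tele} and the implication \ref{prop:LT:as:reg:iv}$\Rightarrow$\ref{prop:LT:as:reg:i} of \cref{prop:LT:as:reg} --- is valid but a longer detour, and the ``alternative'' you mention at the end (an averaged affine operator has an averaged linear part, so \cref{f:av:asreg} applies directly) is exactly the step the paper's citation relies on.
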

\begin{proof}
Combine \cref{FB:collec}\ref{FB:collec:i}, 
\cref{f:av:asreg}, 
\cref{thm:algo:fb:app} 
 and 
 \cref{prop:FB:app}\ref{prop:FB:app:ii}.
\end{proof}
\section*{Acknowledgement}
The author thanks Heinz Bauschke 
for his constructive comments and
support.
{\small 

}

\end{document}